\newtheorem{theorem}{Theorem}[section]
\newtheorem{lemma}[theorem]{Lemma}
\newtheorem{proposition}[theorem]{Proposition}
\newtheorem{corollary}[theorem]{Corollary}
\theoremstyle{definition}
\newtheorem{remark}[theorem]{Remark}
\numberwithin{equation}{section}
\def\endprf{\hspace*{\fill}$\square$}
\DeclareMathOperator{\spn}{span}
\newcommand{\tightoverset}[2]{%
  \mathop{#2}\limits^{\vbox to -.6ex{\kern-.6ex\hbox{$#1$}\vss}}}
\renewcommand{\vec}{\tightoverset{\scriptstyle{\rightharpoonup}}}
\renewcommand{\epsilon}{\varepsilon}
\def\arr{\rightarrow}
\def\implies{\Longrightarrow}
\def\iff{\Leftrightarrow}
\def\restr{\upharpoonright}
\def\nin{\not \in}
\begin{document}

\title{Abelian group actions and hypersmooth equivalence relations}

\author{Michael R. Cotton}

\address{Department of Mathematics, University of North Texas, Denton, Texas 76203}

\email{Michael.Cotton@unt.edu}

\thanks{This research was conducted for the completion of a doctoral dissertation written under the direction of Professor Su Gao at the University of North Texas and was partially supported by NSF grant DMS-0943870. The author would also like to thank Stephen Jackson for many helpful discussions and comments as well as the reviewer of this manuscript for several valuable corrections and improvements.}

\subjclass[2010]{Primary 03E15, 22B05; Secondary 54H05, 54H11}

\keywords{Borel equivalence relations, hypersmooth, hyperfinite, LCA, abelian}

\begin{abstract}
We show that a Borel action of a standard Borel group which is isomorphic to a sum of a countable abelian group with a countable sum of real lines and circles induces an orbit equivalence relation which is hypersmooth, i.e., Borel reducible to eventual agreement on sequences of reals, and it follows from this result along with the structure theory for locally compact abelian groups that Borel actions of Polish LCA groups induce orbit equivalence relations which are essentially hyperfinite, extending a result of Gao and Jackson and answering a question of Ding and Gao.
\end{abstract}

\maketitle

\section{Introduction}
To compare classification problems we need to be able to decide how complicated the corresponding equivalence relations are, and the comparison of complexity that we focus on in this paper is \emph{Borel reducibility} which essentially says there is a reasonably definable way of deciding one equivalence given that we can decide another. It is important in this context that we require the reduction to be definable in some way such as Borel. If all we required was that there exists an injection from one quotient space into the other then the only thing the reduction would compare is the number of equivalence classes, which has very little to do with the complexity of the relations.

Many important equivalence relations are actually \emph{orbit equivalence relations}, meaning the equivalence classes may be realized as the orbits of some reasonably definable group action. For example Vitali equivalence, $r\sim t \Leftrightarrow r-t\in\mathbb{Q}$, is induced by the orbits of $\mathbb{Q}$ acting on $\mathbb{R}$ by translation. For a more sophisticated
example, the finitely generated groups may be coded as a Polish space on which the isomorphism relation for finitely generated groups is induced by the image action of the group of those automorphisms of the free group on countably many generators which only move finitely many of the generators (Champetier, \cite{champetier}).

One class of equivalence relations of particular interest are the hyperfinite relations whose equivalence classes are precisely the orbits of a single Borel automorphism of the space or, equivalently, the orbits of an action of the integers. It has been known for some time that Borel $\mathbb{Z}^n$-actions are still of this type, hence the orbits of a Borel $\mathbb{Z}^n$-action are realizable as the orbits of a single Borel automorphism. But it was not until \cite{gao_jackson} that Gao and Jackson showed that the orbit equivalence induced by a Borel action of \emph{any} countable abelian group is hyperfinite. So in terms of reasonably definable orbit equivalence, no countable abelian group can do worse than $\mathbb{Z}$. Improving on this can go in multiple directions. One is that the abelian property of the group is stronger than necessary. It was shown by Schneider and Seward in \cite{schneider_seward} that it is sufficient for the countable group to be locally nilpotent, and this has been carried further in \cite{cjmst} where Conley, Jackson, Marks, Seward, and Tucker-Drob achieve the result for polycyclic groups and the free actions of a large class of solvable groups. It has long been conjectured that any countable amenable group will only induce such relations, but this currently remains unknown.

Another direction is to do away with the requirement for the group to be countable. In this case, we would aim to show that the induced orbit equivalence relation is \emph{essentially hyperfinite}, meaning that it is Borel reducible to a hyperfinite one. It cannot be exactly hyperfinite since the equivalence classes are no longer countable, but for our notion of complexity the reducibility result is enough. For example, the relation on $\mathbb{R}$ in which all points are equivalent to each other is a very simple relation with only a single class, but it is of course not induced by any $\mathbb{Z}$-action on $\mathbb{R}$. However, we would say that it is \emph{essentially} hyperfinite since it would reduce to any $\mathbb{Z}$-orbit equivalence by way of a constant function. Ding and Gao showed in \cite{ding_gao} that any essentially countable Borel equivalence relation which is Borel reducible to an orbit equivlance relation induced by a non-archimedean abelian Polish group must be essentially hyperfinite. As a corollary, this means in particular that locally compact non-archimedean abelian Polish groups induce only essentially hyperfinite Borel equivalence relations.

So we arrive at the motivation for this paper. Our desire is to explore the descriptive complexity of those orbit equivalence relations which are induced by an action of an abelian standard Borel group which we no longer require to be countable and also wish to avoid the need to be non-archimedean as that case is now well-understood. And we have made some progress in this direction.

First, we are able to conclude that a larger subcategory of abelian standard Borel groups than the locally compact ones will only induce relations which are a level above the hyperfinite ones, but only just. While the essentially hyperfinite equivalence relations are those which are Borel reducible to eventual agreement of sequences of naturals, the \emph{hypersmooth} equivalence relations are those which are reducible to eventual agreement of sequences of reals. We will give more basic definitions later, but note that the hypersmooth are a minimal step above the hyperfinite in the sense that there are no strictly intermediate equivalence relations under Borel reduction.

We have the following result.

\begin{theorem}\label{sumtheorem} A Borel action on a standard Borel space of a group which is (Borel) isomorphic to the sum of a countable abelian group with a countable sum of copies of $\mathbb{R}$ and $\mathbb{T}$ induces a Borel orbit equivalence relation which is hypersmooth.
\end{theorem}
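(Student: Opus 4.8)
The plan is to reduce the general statement to a manageable normal form and then build the hypersmooth witness by a layering argument. First I would recall the standard fact that a hypersmooth equivalence relation is precisely an increasing union $E = \bigcup_n E_n$ of a sequence of Borel equivalence relations each of which is smooth (Borel reducible to equality), so it suffices to exhibit the orbit equivalence relation $E_G^X$ as such an increasing union. Write the acting group as $G \cong A \oplus \bigoplus_{i} H_i$ where $A$ is countable abelian and each $H_i$ is $\mathbb{R}$ or $\mathbb{T}$, and let $G_n$ denote the subgroup generated by the first $n$ of the $H_i$ together with all of $A$ (or, better, a cofinal sequence of finitely-generated-over-the-Lie-part subgroups). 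Then $E_G^X = \bigcup_n E_{G_n}^X$ is an increasing union, so by the characterization above it is enough to show that each $E_{G_n}^X$ is hypersmooth — indeed it would suffice to show each is \emph{smooth}, but that is too much to hope for, so the real target is: a Borel action of $A \oplus \mathbb{R}^k \oplus \mathbb{T}^\ell$ induces a hypersmooth orbit equivalence relation, and then take a further increasing union in $n$.

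So the core case is $G = A \oplus \mathbb{R}^k \oplus \mathbb{T}^\ell$ with $A$ countable abelian. Here I would peel off the connected Lie part $L = \mathbb{R}^k \oplus \mathbb{T}^\ell$ first. The orbit equivalence relation $E_L^X$ of the \emph{connected} group $L$ acting Borel-ly should be smooth: a Borel action of a connected (even just $\sigma$-compact, compactly generated Lie) group admits a Borel transversal-type selector for its orbits because one can use the local structure — cover $L$ by a countable family of relatively compact open sets, use a Borel uniformization / the Becker--Kechris machinery on the induced equivalence relation, exploiting that $L$ has "polynomial-like" coarse geometry and in particular is an increasing union of compact sets with controlled overlap, so the orbit map is locally closed. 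Concretely, $E_{\mathbb{R}^k}^X$ and $E_{\mathbb{T}^\ell}^X$ are smooth because $\mathbb{T}^\ell$ is compact (orbits are compact, hence the orbit equivalence relation is smooth by Becker--Kechris) and $\mathbb{R}^k$ acting Borel-ly is smooth since it can be realized, after a change of Borel structure, as a continuous action on a Polish space where the orbit equivalence relation of a Lie group action is known to be smooth when orbits are locally closed — and for an \emph{arbitrary} Borel $\mathbb{R}^k$-action one reduces to the locally-closed case by the usual Glimm--Effros dichotomy argument combined with the absence of an $E_0$-like obstruction inside a connected amenable group action... this step needs care. Then $G$ acts on the smooth quotient $X/E_L^X$ (realized as a standard Borel space via the selector) by the countable group $A \cong G/L$, and a Borel action of the countable abelian group $A$ induces a hyperfinite — in particular hypersmooth — equivalence relation by Gao--Jackson. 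Finally, I would assemble $E_G^X$ from $E_L^X$ (smooth) and the $A$-action on the quotient (hyperfinite) via the general fact that if $E \subseteq F$, $E$ is smooth, and $F/E$ is hyperfinite on the quotient, then $F$ is hypersmooth; combined with the increasing union over $n$ of the Lie parts, this gives the theorem.

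The main obstacle I anticipate is exactly the smoothness of a general Borel $\mathbb{R}^k$-action — unlike the compact case, orbits of $\mathbb{R}$ need not be closed or even locally closed under an arbitrary Borel action, and a priori one could fear an $E_0$ inside. The key point to establish is that this cannot happen: a Borel $\mathbb{R}^k$-orbit equivalence relation is smooth (not merely hyperfinite), which should follow from the fact that $\mathbb{R}^k$ is a \emph{connected} group together with a cross-section argument — any free-ish part admits a Borel complete section meeting each orbit in a set on which the induced $\mathbb{Z}^k$-like relation is actually trivial, using Kechris's theorem that Borel actions of $\mathbb{R}$ admit Borel complete lacunary sections. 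Handling the non-free behavior (stabilizers, which for $\mathbb{R}^k$ are closed subgroups hence again of the form $\mathbb{R}^a \oplus \mathbb{Z}^b$) by stratifying $X$ into Borel pieces of constant stabilizer type is the technical heart. Once smoothness of the connected-part action is in hand, the rest is bookkeeping with the two cited results (Becker--Kechris for the compact/smooth inputs, Gao--Jackson for the countable abelian quotient) and the elementary closure properties of hypersmoothness under increasing unions and smooth-by-hyperfinite extensions.
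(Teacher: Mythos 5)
Your plan hinges on the claim that a Borel action of $\mathbb{R}^k$ always induces a \emph{smooth} orbit equivalence relation, and this is false. The irrational flow on the two-torus — $\mathbb{R}$ acting on $\mathbb{T}^2$ by $t\cdot(x,y)=(x+t,\,y+\alpha t)$ for irrational $\alpha$ — is a continuous (hence Borel) $\mathbb{R}$-action whose orbit equivalence relation is ergodic with respect to Lebesgue measure and therefore not smooth; it is in fact bireducible with $E_0$. You hope to rule out an $E_0$-type obstruction inside a connected amenable group, but the Glimm--Effros/Becker--Kechris dichotomy points the other way: when orbits are not locally closed (as in the irrational flow), $E_0$ \emph{does} continuously embed, and connectedness and amenability of the acting group give no protection against this. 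Once smoothness of $E_L^X$ fails, your modding-out step breaks down: $X/E_L^X$ cannot be realized as a standard Borel space and there is no induced Borel $A$-action to hand to Gao--Jackson. (A secondary issue: even if each $E_{G_n}^X$ were hypersmooth, an increasing union of hypersmooth relations is not automatically hypersmooth — only increasing unions of \emph{smooth} relations are — so the final assembly step needs more than you give it.)

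The paper avoids this trap entirely. It does not try to show that the connected-part relation is smooth. Instead, after reducing the general case to a Borel $\mathbb{R}^{<\omega}$-action (Theorem \ref{reductiontoR}), it partitions the space into countably many invariant Borel pieces according to the isomorphism type of the stabilizer quotient (Theorem \ref{reductiontofree} — this matches your "stratify by stabilizer type" idea), reducing to free actions of $(\bigoplus_\beta\mathbb{T})\oplus(\bigoplus_\gamma\mathbb{R})$. For the free case it runs a marker/rectangular-region construction in the spirit of Gao--Jackson to produce, inside each $E_n$ (the relation of the $n$th finite-dimensional stage), a Borel \emph{sub}equivalence relation $R_n$ all of whose classes are bounded rectangles, with the crucial eventual-capture property that $xEy$ implies $xR_ny$ for all large $n$. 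Each $R_n$ is smooth because its bounded classes admit Borel selectors (rectangle centers), and the sequence of selectors yields a reduction to $E_1$. In other words, where you want $E_n$ itself to be smooth (false), the paper builds smooth approximants strictly inside $E_n$ that nonetheless catch every pair of $E$-equivalent points cofinitely often; this is the geometric content that cannot be skipped.
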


Then, it follows that we can use the known structure theory for Hausdorff locally compact abelian (i.e., LCA)  groups to reduce those relations induced by any Polish LCA group to the hyperfinite ones, providing a nice extension of Gao and Jackson's result for the countable abelian groups (although in the case of free actions our method does not provide a continuous reduction) and eliminating the need for the non-archimedean property in the LCA case.

\begin{theorem}\label{LCAtheorem} A Borel action of a second countable LCA group on a standard Borel space induces a Borel orbit equivalence relation which is essentially hyperfinite.
\end{theorem}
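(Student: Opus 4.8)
The plan is to combine Theorem~\ref{sumtheorem} with the structure theory of LCA groups, the smoothness of compact group actions, Kechris's theorem that Borel actions of locally compact Polish groups admit countable complete sections, and the Kechris--Louveau dichotomy for hypersmooth equivalence relations. Write $E^X_G$ for the orbit equivalence relation of a given Borel action of $G$ on a standard Borel space $X$. First I would record a structural decomposition: a second countable LCA group $G$ is Polish and locally compact, and by the structure theory of LCA groups it is topologically isomorphic to $\mathbb{R}^n \times G_0$ for some $n$, where $G_0$ possesses a compact open subgroup $K$. Since $G$ is second countable and the cosets of $K$ in $G_0$ are disjoint open sets, the quotient $D := G_0/K$ is a countable discrete abelian group. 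Identifying $K$ with the compact normal subgroup $\{0\}^n \times K$ of $G$, we get $G/K \cong \mathbb{R}^n \times D$.

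Next I would pass to the quotient by $K$. Because $K$ is compact and acts in a Borel fashion, the relation ``lying in the same $K$-orbit'' is smooth, so it admits a Borel transversal $T \subseteq X$ together with a Borel selector $s \colon X \to T$. Normality of $K$ lets the $G$-action descend to a Borel action of $G/K$ on $T$, and one checks that $s$ is a Borel reduction of $E^X_G$ to $E^T_{G/K}$. Now $G/K \cong \mathbb{R}^n \times D$ is a sum of finitely many copies of $\mathbb{R}$ with a countable abelian group, so Theorem~\ref{sumtheorem} applies to it: $E^T_{G/K}$ is a Borel, hypersmooth equivalence relation. Pulling back along $s$ (and using that Borelness transfers along Borel reductions to a Borel relation), $E^X_G$ is itself Borel and hypersmooth.

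It remains to upgrade hypersmoothness to essential hyperfiniteness, and this is where local compactness enters. By Kechris's theorem on countable sections, the Borel action of the locally compact Polish group $G$ on $X$ has a countable Borel complete section, whence $E^X_G$ is essentially countable. Finally, the Kechris--Louveau dichotomy says that a Borel hypersmooth equivalence relation $E$ satisfies $E \le_B E_0$ or $E_1 \le_B E$; since $E_1$ is not essentially countable, the latter alternative cannot hold for $E^X_G$, and therefore $E^X_G \le_B E_0$, i.e., $E^X_G$ is essentially hyperfinite.

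The step I expect to require the most care is the reduction to $\mathbb{R}^n \times D$: one must invoke the LCA structure theorem in the form that produces the compact open subgroup $K$, arrange $K$ to be normal in all of $G$, and verify that the descent of the action to the transversal $T$ is Borel and correctly captures the $G$-orbits, so that $s$ is genuinely a reduction. The remaining ingredients are essentially off-the-shelf, but it is worth emphasizing that both of them --- countable sections via Kechris and the hypersmooth dichotomy via Kechris--Louveau --- are exactly what allow us to do better than the bare hypersmoothness supplied by Theorem~\ref{sumtheorem}, and that the former is the one place where the hypothesis of local compactness (rather than the broader class of groups in Theorem~\ref{sumtheorem}) is genuinely used.
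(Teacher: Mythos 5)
Your proposal is correct and follows essentially the same route as the paper: apply the LCA principal structure theorem, mod out the compact open subgroup via a Borel selector (the paper's Lemma~\ref{modout} and Theorem~\ref{LCAuniversal}), invoke Theorem~\ref{sumtheorem} to get hypersmoothness, use Kechris's countable sections (Corollary~\ref{essentiallycountable}) to get essential countability, and conclude via the Kechris--Louveau dichotomy (Corollary~\ref{hypersmoothtohyperfinite}). The only cosmetic difference is that the paper further replaces the countable discrete quotient $D$ by a preimage action of $\mathbb{Z}^{<\omega}$ before citing Theorem~\ref{sumtheorem}, which changes nothing.
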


In Section \ref{reductionsection} we show that all of the orbit equivalence relations from Theorems \ref{sumtheorem} and \ref{LCAtheorem} reduce to ones induced by a Borel action of a countable sum of copies of $\mathbb{R}$. In Section \ref{reductiontofreesection} we show that these orbit equivalence relations which are induced by a countable sum of copies of $\mathbb{R}$ are countable disjoint unions of equivalence relations which are induced by free Borel actions of countable sums of copies of $\mathbb{R}$ and $\mathbb{T}$. Finally, in Section \ref{freesection}, we assume familiarity with Gao and Jackson's argument from \cite{gao_jackson} for a free action of a countable sum of copies of $\mathbb{Z}$ and show how it is modified to conclude the relations induced by the free actions of countable sums of copies of $\mathbb{R}$ and $\mathbb{T}$ are hypersmooth. Since a countable disjoint union of hypersmooth equivalence relations is hypersmooth, locally compact Polish groups induce essentially countable equivalence relations, and essentially countable equivalence relations which are hypersmooth must be essentially hyperfinite, we will have proven Theorems \ref{sumtheorem} and \ref{LCAtheorem}.

\section{Preliminaries}
\subsection{Standard Borel Spaces}
A \emph{Polish space} is a separable, completely metrizable topological space.

The collection of \emph{Borel sets} of a topological space is the $\sigma$-algebra generated by the open sets. In some settings, the collection of Borel sets without distinguishing the open sets is more important than any particular topology involved. So rather than a topology on a space, we will often be interested only in a \emph{Borel structure} on the space. We say that a function is a \emph{Borel function} (or Borel measurable function) if the preimages of Borel sets are Borel.

The proof of the following theorem is surprisingly nontrivial:  (See $\S 15A$ in \cite{kechris}.)

\begin{theorem}\emph{(Luzin--Suslin)}
If $X$ and $Y$ are Polish spaces and $f:X\arr Y$ is a Borel injection, then for any Borel set $A$ in $X$, $f(A)$ is Borel in $Y$.
\end{theorem}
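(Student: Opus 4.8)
The plan is to reduce to the case of a continuous injection and then run a Souslin (Luzin) scheme argument in parallel on $X$ and $Y$, the two halves being glued together by the first separation theorem for analytic sets.

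First I would apply the change-of-topology technique. Given the Borel injection $f : X \arr Y$ and the Borel set $A \subseteq X$, there is a finer Polish topology $\tau'$ on $X$ generating the same Borel $\sigma$-algebra for which $f$ is continuous and $A$ is clopen: refine $X$ so that $A$ becomes clopen and so that the $f$-preimages of a countable basis of $Y$ become clopen, and amalgamate these countably many refinements into a single Polish topology. A clopen subspace of a Polish space is Polish, so $A$ with the relative $\tau'$-topology is Polish and $f \restr A : A \arr Y$ is a continuous injection; since the Borel structure of $X$ is unchanged, it suffices to prove the statement when $X$ is Polish and $f : X \arr Y$ is a continuous injection, concluding then that $f(X)$ is Borel.

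For this reduced statement I would fix complete compatible metrics $d_X$ on $X$ and $d_Y$ on $Y$ and build a Souslin scheme $(F_s)_{s \in \mathbb{N}^{<\mathbb{N}}}$ of subsets of $X$ with $F_\emptyset = X$, $F_s = \bigcup_n F_{s^\frown n}$ with the $F_{s^\frown n}$ ($n \in \mathbb{N}$) pairwise disjoint, $\overline{F_{s^\frown n}} \subseteq F_s$, and $\mathrm{diam}_{d_X}(F_s) \le 2^{-|s|}$ together with $\mathrm{diam}_{d_Y}(f(F_s)) \le 2^{-|s|}$ whenever $|s| \ge 1$; the last condition is available precisely because $f$ is continuous, so $X$ is covered by open sets whose $f$-images have arbitrarily small $d_Y$-diameter. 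Then I would recursively attach Borel sets $B_s \subseteq Y$ with $B_\emptyset = Y$, $f(F_s) \subseteq B_s$, $B_{s^\frown n} \subseteq B_s$, $B_{s^\frown n} \subseteq \overline{f(F_{s^\frown n})}$, and the $B_{s^\frown n}$ ($n \in \mathbb{N}$) pairwise disjoint: having $B_s$ with $f(F_s) \subseteq B_s$, the sets $f(F_{s^\frown n})$ are pairwise disjoint analytic sets, so by the (countable form of the) first separation theorem there are pairwise disjoint Borel sets $C_n \supseteq f(F_{s^\frown n})$, and one puts $B_{s^\frown n} := B_s \cap \overline{f(F_{s^\frown n})} \cap C_n$. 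Finally I would verify that $f(X) = \bigcap_n \bigcup_{s \in \mathbb{N}^n} B_s$, which is Borel: the inclusion $\subseteq$ is immediate by following, for a given point of $X$, the branch of the scheme through it; for $\supseteq$, disjointness of siblings forces any $y$ in the right-hand side to lie in $B_{\alpha \restr n}$ for all $n$ along a single branch $\alpha \in \mathbb{N}^{\mathbb{N}}$, the conditions $\overline{F_{\alpha \restr (n+1)}} \subseteq F_{\alpha\restr n}$ and $\mathrm{diam}_{d_X}(F_{\alpha \restr n}) \to 0$ together with completeness of $d_X$ yield a point $x \in \bigcap_n F_{\alpha\restr n}$, and then $y, f(x) \in \overline{f(F_{\alpha \restr n})}$ for all $n$ with $\mathrm{diam}_{d_Y}(f(F_{\alpha\restr n})) \to 0$ forces $y = f(x) \in f(X)$.

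I expect the main obstacle to be the middle step: producing the scheme $(B_s)$ on the $Y$ side so that sibling disjointness, nesting, and containment inside $\overline{f(F_s)}$ all hold simultaneously. This is exactly what the first separation theorem supplies, and it is the one genuinely nontrivial ingredient — the phenomenon being that although the continuous injective image of an open set need not be Borel-simple in any naive way, the rigid combinatorics of the scheme (disjoint siblings, shrinking diameters, a complete metric downstairs) pin the image down to a Borel set. The change-of-topology reduction and the final verification are then routine bookkeeping.
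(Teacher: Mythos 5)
Your argument is correct and is, in essence, the standard proof of Luzin--Suslin found in Kechris \cite{kechris} \S 15A, which the paper cites without giving its own proof: change of topology to reduce to a continuous injection, a Luzin scheme of shrinking-diameter pieces (also shrinking in $Y$ via continuity), disjointification of the analytic images by the separation theorem, and then completeness of $d_X$ plus the containment $B_s\subseteq\overline{f(F_s)}$ to identify $\bigcap_n\bigcup_{|s|=n}B_s$ with $f(X)$. The only cosmetic difference from the cited proof is that you build the Luzin scheme directly on $X$ rather than first realizing $X$ as a closed subset of $\mathbb{N}^{\mathbb{N}}$; the separation theorem is used in exactly the same way and the verification is the same.
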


Thus, one-to-one Borel functions have Borel inverses. So if we suppose there exists a Borel bijection $f$ from $X$ onto $Y$, it follows that $A$ is Borel in $X$ iff $f(A)$ is Borel in $Y$. And we can say that $X$ and $Y$ are \emph{Borel isomorphic}.
 
When working in the generality of Borel structures, we take advantage of the \emph{Borel isomorphism theorem}:
(Proofs can be found in $\S 15B$ of \cite{kechris} or $\S 1.3$ of \cite{gao}.)
\begin{theorem}\label{borelisomorphism}
Any two uncountable Polish spaces are Borel isomorphic.
\end{theorem}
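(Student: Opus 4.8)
The plan is to reduce everything to the single assertion that every uncountable Polish space is Borel isomorphic to the Cantor space $2^{\mathbb{N}}$; since Borel isomorphism is an equivalence relation among Polish spaces, that assertion immediately yields the theorem. To prove the assertion I would combine a Borel version of the Schr\"oder--Bernstein theorem with two embedding facts.

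First I would establish the \emph{Borel Schr\"oder--Bernstein theorem}: if $f : X \to Y$ and $g : Y \to X$ are Borel injections between Polish spaces, then $X$ and $Y$ are Borel isomorphic. The Luzin--Suslin theorem stated above is exactly what makes the classical back-and-forth construction go through in the Borel category: $f$ and $g$ have Borel ranges and Borel inverses on those ranges, and the usual decomposition of $X$ and $Y$ according to the ancestry of a point under $g^{-1} \circ f^{-1}$ uses only countable Boolean operations together with images and preimages under $f$, $g$, $f^{-1}$, $g^{-1}$, so it produces Borel pieces and a Borel bijection with Borel inverse.

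Then I would supply the two embeddings. On one side, every uncountable Polish space $X$ contains a homeomorphic copy of $2^{\mathbb{N}}$: by the Cantor--Bendixson analysis $X$ has a nonempty perfect closed subset, inside which one builds the standard Cantor scheme of nonempty relatively open sets with pairwise disjoint closures and vanishing diameters, yielding a Borel injection $2^{\mathbb{N}} \to X$. On the other side, every Polish space $X$ embeds homeomorphically into the Hilbert cube $[0,1]^{\mathbb{N}}$ via a countable family of continuous functions separating points from closed sets, and the binary-digit expansion realizes $[0,1]$ as a Borel-isomorphic copy of a Borel subset of $2^{\mathbb{N}}$ (it is injective off the dyadic rationals), so $[0,1]^{\mathbb{N}}$ Borel-embeds into $(2^{\mathbb{N}})^{\mathbb{N}} \cong 2^{\mathbb{N} \times \mathbb{N}} \cong 2^{\mathbb{N}}$, and hence so does $X$. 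Feeding these two Borel injections into the Borel Schr\"oder--Bernstein theorem shows $X$ is Borel isomorphic to $2^{\mathbb{N}}$, which completes the proof.

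I expect the conceptual core to be the perfect set property used for the first embedding --- equivalently the Cantor--Bendixson theorem for Polish spaces --- and the most delicate bookkeeping to be in the Borel Schr\"oder--Bernstein step, where one must verify that every set produced in the back-and-forth is Borel; this is precisely the role played by Luzin--Suslin, and it is the reason that theorem was quoted just beforehand.
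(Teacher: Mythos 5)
Your proposal is correct and is essentially the standard argument found in the references (Kechris \S 15B, Gao \S 1.3) that the paper cites in lieu of giving its own proof: reduce to showing every uncountable Polish space is Borel isomorphic to $2^{\mathbb{N}}$, then apply a Borel Schr\"oder--Bernstein theorem (which rests on Luzin--Suslin) to a Cantor-scheme embedding of $2^{\mathbb{N}}$ into the space and a Hilbert-cube/binary-expansion embedding of the space into $2^{\mathbb{N}}$. The only small imprecision is the parenthetical claim that binary expansion is ``injective off the dyadic rationals''; one should instead fix a convention (say, forbid expansions ending in all $1$s) so that the map $[0,1]\to 2^{\mathbb{N}}$ is a genuine Borel injection everywhere, which is what Schr\"oder--Bernstein actually requires.
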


By the Borel isomorphism theorem, the Borel structures on uncountable Polish spaces are all essentially the same. In particular, any two Polish topologies on an uncountable set must generate the same Borel sets. So a set which is Borel in \emph{some} Polish topology must be Borel in \emph{all} Polish topologies, and so we may talk about \emph{the} Borel structure on such a space.

A \emph{standard Borel space} is a measurable space $(X,\mathcal{A})$ where there exists a Polish topology on $X$ so that $\mathcal{A}$ is the $\sigma$-algebra of Borel sets generated by the topology.

The first step in attacking many classification problems is to parametrize the collection of interesting objects as a standard Borel space so that we can apply the tools of descriptive set theory. Often each object can be embedded into a Polish space as a subset. So once we have these embeddings we need to know if we can build a standard Borel space out of those subsets, and one way to do that is to use the Effros structure.

Let $F(X)$ denote the collection of closed subsets of a space $X$. The \emph{Effros Borel structure} on $F(X)$ is the $\sigma$-algebra generated by the sets of the form $\{F\in F(X):F\cap U\neq\emptyset\}$ where $U\subseteq X$ is open. (For the following, see $\S 12C$ in \cite{kechris}.)
\begin{theorem}
If $X$ is a Polish space, the space $F(X)$ with the Effros structure is a standard Borel space.
\end{theorem}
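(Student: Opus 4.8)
The plan is to reduce the statement to the existence of a countable family of Borel ``selectors'' $d_n\colon F(X)\setminus\{\emptyset\}\to X$ with $d_n(F)\in F$ and $\{d_n(F):n\in\mathbb N\}$ dense in $F$, and then to use these to realize $F(X)\setminus\{\emptyset\}$ Borel-isomorphically as a Borel subset of a Polish space (so that it is standard Borel by the general fact that Borel subsets of Polish spaces carry a standard Borel structure), finally adjoining the point $\emptyset$. Fix a complete compatible metric $d\le 1$ on $X$, a countable dense set $\{x_n\}$, and the countable basis $\{V_n\}$ of rational-radius balls. Two routine preliminary observations: first, the Effros structure is generated by the countably many sets $\{F:F\cap V_n\ne\emptyset\}$, since $\{F:F\cap U\ne\emptyset\}=\bigcup\{\{F:F\cap V_n\ne\emptyset\}:V_n\subseteq U\}$; in particular it is countably generated and separates points. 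Second, for any open $V$ the map $F\mapsto\overline{F\cap V}$ on $F(X)$ is Borel, because the preimage of a generator $\{G:G\cap U\ne\emptyset\}$ is exactly $\{F:F\cap(U\cap V)\ne\emptyset\}$ (a closed set meets an open set iff the set itself does).

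The heart of the argument, and the step I expect to require the most care, is the construction of a single Borel map $s\colon F(X)\setminus\{\emptyset\}\to X$ with $s(F)\in F$. I would build $s(F)=\lim_i x_{n_i(F)}$ where the indices $n_i(F)$ are chosen by recursion on $i$: take $n_0(F)$ least with $B(x_n,2^{-1})\cap F\ne\emptyset$, and given $n_i(F)$ take $n_{i+1}(F)$ least such that $B(x_n,2^{-i-2})\cap F\ne\emptyset$ and $d(x_n,x_{n_i(F)})<2^{-i-1}+2^{-i-2}$. Density of $\{x_n\}$ guarantees a witness exists at every stage, and since the condition defining $n_{i+1}(F)$ is a Boolean combination of Effros generators and of the Borel sets $\{F:d(x_m,x_{n_i(F)})<r\}$ (using that $n_i$ is Borel by induction), each $n_i$ is a Borel function of $F$. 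The triangle inequality makes $(x_{n_i(F)})_i$ Cauchy with $d(x_{n_i(F)},F)<2^{-i-1}$, so by completeness of $X$ and closedness of $F$ it converges to a point $s(F)\in F$; and $s$ is Borel, being a pointwise limit of the Borel maps $F\mapsto x_{n_i(F)}$.

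Given $s$, set $d_n(F)=s(\overline{F\cap V_n})$ when $F\cap V_n\ne\emptyset$ and $d_n(F)=s(F)$ otherwise; by the second observation each $d_n$ is Borel, clearly $d_n(F)\in F$, and $\{d_n(F):n\}$ is dense in $F$ (given $x\in F$ and $\epsilon>0$, pick $V_n\ni x$ of diameter $<\epsilon$, so $d_n(F)\in\overline{F\cap V_n}\subseteq\overline{V_n}$). Now let $c\colon F(X)\setminus\{\emptyset\}\to X^{\mathbb N}$, $c(F)=(d_n(F))_n$, which is Borel, and $e\colon X^{\mathbb N}\to F(X)$, $e((y_n))=\overline{\{y_n:n\}}$, which is Borel since $e^{-1}(\{G:G\cap U\ne\emptyset\})=\{(y_n):\exists n\,y_n\in U\}$. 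Density of the $d_n(F)$ gives $e\circ c=\mathrm{id}$, so $c$ is an injection with Borel inverse (the restriction of $e$), and its range is $\{(y_n)\in X^{\mathbb N}:\forall n\,\, d_n(e((y_m)_m))=y_n\}$, a countable intersection of preimages of the closed diagonal of $X\times X$ under Borel maps, hence Borel in $X^{\mathbb N}$. Thus $F(X)\setminus\{\emptyset\}$ is standard Borel; since $\{\emptyset\}=\bigcap_n\{F:F\cap V_n=\emptyset\}$ is a Borel singleton, $F(X)$ is standard Borel as well.
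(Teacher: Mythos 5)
Your argument is correct. The paper itself offers no proof, deferring to Kechris \S 12C, where the standard route is to fix a metrizable compactification $\hat X$ of $X$ and show that $F\mapsto\overline{F}^{\hat X}$ is a Borel isomorphism of $(F(X),\text{Effros})$ onto a Borel subset of the compact Polish hyperspace $K(\hat X)$ with the Vietoris topology; the Kuratowski--Ryll-Nardzewski selection theorem (which this paper quotes immediately afterward as Theorem~\ref{selection}) is then derived as a separate consequence. You reverse that order: the recursive construction of your single selector $s$ is precisely the core of the KRN argument, carried out directly from the Effros $\sigma$-algebra together with completeness of $X$, and you then bootstrap to a dense family $(d_n)$ via $F\mapsto\overline{F\cap V_n}$ and use it to embed $F(X)\setminus\{\emptyset\}$ into $X^{\mathbb N}$ as the Borel set $\{\vec y:\forall n\,(d_n\circ e)(\vec y)=y_n\}$. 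I checked the details --- existence of a witness at each stage of the recursion, the Cauchy and membership estimates giving $s(F)\in F$, the Borelness of $e$ and of $F\mapsto\overline{F\cap V}$, and the identification of $\operatorname{ran}(c)$ --- and they all go through. What your route buys is a self-contained proof avoiding compactifications and hyperspace topology entirely; what it costs is that you essentially reprove, in passing, a theorem the paper intends to cite as an independent tool.
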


Finally, we need a good tool for showing that a subspace of the Effros space is Borel or often for just helping us definably choose points in a construction. (See $\S 12C$ in \cite{kechris}.)
\begin{theorem}\label{selection}\emph{(Kuratowski--Ryll-Nardzewski)}
For a Polish space $X$, there is a sequence of Borel functions $f_{n}:F(X)\setminus\{\emptyset\}\arr X$ such that $\{f_{n}(F)\}$ is dense in $F$ for each $F\in F(X)\setminus\{\emptyset\}$.
\end{theorem}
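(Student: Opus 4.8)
The plan is to build the $f_n$ so that each one is responsible for witnessing density near one piece of a fixed countable basis. I would fix a complete compatible metric $d\le 1$ on $X$ and a countable basis $\{V_m:m\in\omega\}$ of nonempty open sets, and first reduce to the following claim: for each $m$ there is a Borel map $g_m\colon F(X)\setminus\{\emptyset\}\to X$ with $g_m(F)\in F$ always and $g_m(F)\in V_m$ whenever $F\cap V_m\ne\emptyset$. Granting this, $(g_m)_m$ already works: given a nonempty closed $F$, a point $x\in F$, and $\epsilon>0$, pick $m$ with $x\in V_m$ and $\mathrm{diam}(V_m)<\epsilon$; then $x\in F\cap V_m$, so $g_m(F)\in F\cap V_m$ and $d(g_m(F),x)<\epsilon$, hence $\{g_m(F):m\in\omega\}$ is dense in $F$. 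So the whole problem comes down to constructing one such $g_m$. It is cleanest to describe the construction for an arbitrary basic open set $V$; the degenerate case $V=X$ (with ``$F\cap X\ne\emptyset$'' always true) then yields a plain Borel selector $g_X$ with $g_X(F)\in F$, which I would use to define $g_m$ off the Borel set $\{F:F\cap V_m\ne\emptyset\}$.

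To build the selector attached to $V$, I would run a recursion on the Borel set $\mathcal{A}_V=\{F:F\cap V\ne\emptyset\}$ producing for each such $F$ a shrinking chain of basic open sets $V=W_0(F)\supseteq W_1(F)\supseteq\cdots$ with $\overline{W_{k+1}(F)}\subseteq W_k(F)$, $\mathrm{diam}(W_k(F))\le 2^{-k}$, and $F\cap W_k(F)\ne\emptyset$: given $W_k(F)$, let $W_{k+1}(F)$ be the first basic open set $V_m$ in the fixed enumeration with $\overline{V_m}\subseteq W_k(F)$, $\mathrm{diam}(V_m)\le 2^{-(k+1)}$, and $F\cap V_m\ne\emptyset$. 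Such a $V_m$ exists because $X$ is metrizable (hence regular): pick any $y\in F\cap W_k(F)$ and a small enough basic ball around $y$ whose closure stays inside $W_k(F)$. The sets $F\cap\overline{W_k(F)}$ then form a decreasing sequence of nonempty closed subsets of the complete space $X$ with diameters tending to $0$, so by Cantor's intersection theorem the intersection is a single point; I would set $g_V(F)$ equal to that point, which lies in $F$ and, since $\overline{W_1(F)}\subseteq W_0(F)=V$, also in $V$. Because each $W_k(F)$ ranges over only countably many basic sets and $W_{k+1}(F)$ is determined by $W_k(F)$ together with the values of the Effros‑generating predicates ``$F$ meets $V_m$'', each map $F\mapsto W_k(F)$ is Borel; therefore $g_V$, being the pointwise limit of the Borel maps $F\mapsto(\text{center of }W_k(F))$, is Borel. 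Extending $g_m$ off $\mathcal{A}_{V_m}$ by $g_X$ finishes the construction.

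The point that I expect to need the most care is exactly the Borelness of this recursion: the construction must be arranged so that the only way $F$ ever influences a choice is through conditions of the form ``$F\cap(\text{fixed basic open set})\ne\emptyset$'', since those are precisely the generators of the Effros $\sigma$-algebra, and then each stage is a countable case analysis and the resulting $F\mapsto W_k(F)$ are Borel by induction. Two secondary points also need attention: that the ``first suitable $V_m$'' always exists, which is where regularity/metrizability of $X$ enters, and that one insists on $\overline{W_1(F)}\subseteq V$ so the limit point lands in $V$ itself and not merely in $\overline V$ — without that, the density argument of the first paragraph would fail. The remaining ingredients (Cantor's intersection theorem, and reindexing the $g_m$ into a single sequence $(f_n)$) are routine.
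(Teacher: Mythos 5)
The paper states this as a background result with a bare citation to Kechris \S 12C and supplies no in-text proof, so there is no argument of the paper's to compare against directly. Your argument is correct and is essentially the standard Kuratowski--Ryll-Nardzewski proof as it appears in that reference: the only cosmetic difference is that Kechris tracks a Cauchy sequence of points drawn from a fixed countable dense set while you track a shrinking chain of basic open sets, which amounts to the same thing. You have also correctly isolated the two places that actually require care --- arranging the recursion so that $F$ enters only through the Effros generators $\{F : F\cap V_m\ne\emptyset\}$, and insisting on $\overline{W_1(F)}\subseteq V$ so the limit point lands in $V$ and not merely in $\overline{V}$.
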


\subsection{Some Classifications of Equivalence Relations}

For two equivalence relations $E$ and $F$ on standard Borel spaces $X$ and $Y$, resp., we say that $E$ is \emph{Borel reducible} to $F$, denoted $E\leq_{B}F$, if there exists a Borel function $f:X\arr Y$ such that $x_{1}\, E\, x_{2}\iff f(x_{1})\, F\, f(x_{2})$. Equivalently, $E\leq_{B}F$ if there is an injection with a Borel lifting from the equivalence classes $X/E$ into the equivalence classes $Y/F$. We say $E$ is \emph{bireducible} to $F$ if both $E\leq_{B}F$ and $F\leq_{B}E$.

We call an equivalence relation \emph{smooth} if it is Borel reducible to $=$ on the reals, or equivalently by the Borel isomorphism theorem, if it is reducible to the identity relation on $X$ for any uncountable Polish space $X$. The smooth equivalence relations are also called \emph{concretely classifiable} since in this case the Borel reduction is providing a reasonably definable procedure for computing a number, or concrete invariant, which decides the classification of the object. A useful fact for us is the following: (See $\S 5.4$ in \cite{gao}.)\begin{theorem}\label{closed}
Closed equivalence relations are smooth.
\end{theorem}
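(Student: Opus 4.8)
The plan is to reduce $E$ to equality on the Effros space $F(X)$ by means of the \emph{class map}. Fix a Polish topology on $X$ generating its Borel structure and in which $E$ is closed (such a topology exists by hypothesis), and define $c\colon X\to F(X)$ by $c(x)=[x]_E=\{y:(x,y)\in E\}$. Each class $[x]_E$ is a section of the closed set $E$ and so is closed in $X$; thus $c$ really does take values in $F(X)$, and it is by construction a reduction, since $x\mathrel E y$ exactly when $[x]_E=[y]_E$. As equality on any standard Borel space is smooth --- inject an uncountable one into $\mathbb{R}$ via the Borel isomorphism theorem, a countable one into $\mathbb{N}$ --- it remains only to show that $c$ is Borel; then $E$ is Borel reducible to $=$ on $F(X)$, hence to $=$ on $\mathbb{R}$.

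By the definition of the Effros $\sigma$-algebra, together with second countability, $c$ is Borel provided $\{x:[x]_E\subseteq C\}$ is Borel for every closed $C\subseteq X$ (equivalently, $\{x:[x]_E\cap U\neq\emptyset\}$ is Borel for every open $U$). Writing the open set $(X\times X)\setminus E$ as a countable union of open rectangles $A_n\times B_n$, one checks that $[x]_E\subseteq C$ if and only if $X\setminus C\subseteq\bigcup\{B_n:x\in A_n\}$; so, setting $S(x)=\{n:x\in A_n\}$ (for which each $\{x:n\in S(x)\}=A_n$ is open), what must be shown Borel in $x$ is the statement that the open set $X\setminus C$ is covered by the subfamily $\{B_n:n\in S(x)\}$. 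When $X$ is locally compact this is routine: $X\setminus C$ is then $\sigma$-compact, a cover of a compact set has a finite subcover, and ``some finite $F\subseteq S(x)$ covers the fixed compact piece $K$'' unwinds to the open set $\bigcup\{\bigcap_{n\in F}A_n:F\text{ finite},\ K\subseteq\bigcup_{n\in F}B_n\}$; intersecting over countably many compact pieces exhausting $X\setminus C$ yields a Borel set. Thus $c$ is Borel, and $E$ is smooth, whenever $X$ is locally compact --- in particular when $X$ is compact metrizable.

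For general $X$ the covering condition above is only $\Pi^1_1$ on its face, and closing that gap --- which forces one to use that $E$ is not just a closed set but a closed \emph{equivalence} relation --- is the step I expect to be the main obstacle. One direct approach is to establish Borelness of the sets $\{x:[x]_E\cap U\neq\emptyset\}$ by using transitivity and symmetry together with the Kuratowski--Ryll-Nardzewski selectors to replace the problematic quantifier over the closed class $[x]_E$ by a quantifier over a countable dense subset of it. A cleaner alternative is to bootstrap from the locally compact case via the Glimm--Effros ($E_0$) dichotomy: if $E$ were non-smooth there would be a continuous injection $g\colon 2^{\mathbb{N}}\to X$ reducing $E_0$ to $E$; then $K=g(2^{\mathbb{N}})$ is compact, $g$ is a homeomorphism of $2^{\mathbb{N}}$ onto $K$ carrying $E_0$ to the closed equivalence relation $E\cap(K\times K)$ on the compact metrizable space $K$, and the preceding paragraph makes $E\cap(K\times K)$, hence $E_0$, smooth --- contradicting the non-smoothness of $E_0$. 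Either way, once $c$ is known to be Borel the proof is complete.
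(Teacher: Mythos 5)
Your argument is logically correct but takes a visibly heavier route than the source the paper cites. Both halves check out: for locally compact $X$, writing $(X\times X)\setminus E=\bigcup_n A_n\times B_n$ with $A_n,B_n$ basic open, the condition $[x]_E\subseteq C$ (with $C$ closed) becomes ``the open set $X\setminus C$ is covered by $\{B_n:x\in A_n\}$,'' and $\sigma$-compactness of $X\setminus C$ together with finite subcovers turn this into a countable intersection of open sets in $x$, so the Effros class map is Borel and $E$ is smooth; and the Harrington--Kechris--Louveau $E_0$-dichotomy then transports a hypothetical non-smooth closed $E$ to a closed equivalence relation on the compact set $g(2^{\mathbb N})$, where the first half would force $E_0$ to be smooth, a contradiction. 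However, the paper cites \S 5.4 of Gao's book, which precedes the chapter devoted to the $E_0$-dichotomy; the intended proof is direct, establishing Borelness of $x\mapsto[x]_E$ (or, equivalently, producing a Borel selector) for an arbitrary Polish $X$ without invoking any dichotomy theorem, after which smoothness is immediate by composing with a Borel isomorphism $F(X)\to\mathbb{R}$. Deriving this very basic fact from HKL is therefore a substantial logical overshoot, even though it is valid. Separately, your first sketch --- using Kuratowski--Ryll-Nardzewski selectors to replace the quantifier over the closed class $[x]_E$ by one over a countable dense subset --- is circular as written: Theorem \ref{selection} presupposes that $x\mapsto[x]_E$ is already Effros-Borel, which is the very point at issue. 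You correctly flagged this as the main obstacle, but closing that gap needs a different idea, not a selector; supplying that elementary argument (rather than the HKL detour) is precisely what the cited \S 5.4 does.
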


We call a Borel equivalence relation a \emph{countable} equivalence relation if each equivalence class is countable. Similarly, we call an equivalence relation a finite relation if each equivalence class is finite. And then a countable Borel equivalence relation $E$ is called \emph{hyperfinite} if there is an increasing sequence $F_{0}\subseteq F_{1}\subseteq F_{2}\subseteq\ldots$ of finite Borel equivalence relations with $E=\bigcup_{n}F_{n}$, i.e., $x E y\Leftrightarrow \exists n (x F_{n}y)$. However, the term hyperfinite is only used to describe countable equivalence relations. In our wider context, an equivalence relation is called \emph{essentially countable} if it is Borel reducible to a countable Borel equivalence relation, and an equivalence relation is called \emph{essentially hyperfinite} if it is reducible to a hyperfinite Borel equivalence relation.

We call an equivalence relation \emph{hypersmooth} if there is an increasing sequence $F_{0}\subseteq F_{1}\subseteq F_{2}\subseteq\ldots$ of smooth Borel equivalence relations with $E=\bigcup_{n}F_{n}$, i.e., $x E y\Leftrightarrow \exists n (x F_{n}y)$.

It is useful to be able to show equivalence relations belong to a certain Borel reducibility class by having benchmark relations to reduce them to, and we have these for the classes we consider here. Define $E_0$ to be the equivalence relation on $\mathbb{N}^\mathbb{N}$ defined by $\vec{x} E_0 \vec{y}\Leftrightarrow\exists n\forall m\geq n\,(x_m =y_m)$. And define $E_1$ to be the equivalence relation on $\mathbb{R}^\mathbb{N}$ defined by $\vec{x} E_1 \vec{y}\Leftrightarrow\exists n\forall m\geq n\,(x_m=y_m)$. The following fact (See \cite{dougherty_jackson_kechris} and \cite{kechris_louveau}) establishes that $E_0$ and $E_1$ indeed are benchmarks for the essentially hyperfinite and hypersmooth relations in the same way that $=_{\mathbb{R}}$ is a benchmark for the smooth.

\begin{proposition} A Borel equivalence relation $E$ is hypersmooth iff $E\leq_{B}E_1$, and $E$ is essentially hyperfinite iff $E\leq_{B}E_0$.
\end{proposition}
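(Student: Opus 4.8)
The plan is to prove each biconditional by splitting into its two implications. For the $E_1$ characterization both directions are elementary and essentially dual to each other; for the $E_0$ characterization one direction is immediate from the fact that $E_0$ is itself hyperfinite, while the other rests on the theorem of Dougherty, Jackson, and Kechris that every hyperfinite Borel equivalence relation is Borel reducible to $E_0$, which I expect to be the one genuinely hard ingredient.

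First I would show $E \leq_B E_1 \implies E$ hypersmooth. Given a Borel reduction $f : X \arr \mathbb{R}^{\mathbb{N}}$ of $E$ to $E_1$, for each $n$ put $x\, F_n\, y \iff f(x)_m = f(y)_m$ for all $m \geq n$. Each $F_n$ is a Borel equivalence relation, being a countable intersection of Borel conditions; the sequence $(F_n)$ is increasing; and $\bigcup_n F_n = E$, since $x\, F_n\, y$ implies $f(x)\, E_1\, f(y)$ implies $x\, E\, y$, while conversely $x\, E\, y$ gives $f(x)\, E_1\, f(y)$, i.e.\ eventual agreement, hence $x\, F_n\, y$ for some $n$. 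It remains to see each $F_n$ is smooth, and this is witnessed by the Borel map $x \mapsto (f(x)_n, f(x)_{n+1}, \dots)$, which reduces $F_n$ to $=$ on $\mathbb{R}^{\mathbb{N}}$; and $=$ on $\mathbb{R}^{\mathbb{N}}$ is smooth since it is closed (Theorem \ref{closed}), or since $\mathbb{R}^{\mathbb{N}}$ is an uncountable Polish space (Theorem \ref{borelisomorphism}). Thus $E$ is hypersmooth. For the converse, suppose $E = \bigcup_n F_n$ with the $F_n$ increasing smooth Borel equivalence relations and fix, for each $n$, a Borel $g_n : X \arr \mathbb{R}$ with $x\, F_n\, y \iff g_n(x) = g_n(y)$. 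Define $f : X \arr \mathbb{R}^{\mathbb{N}}$ by $f(x) = (g_n(x))_n$; this is Borel since each coordinate is. If $x\, E\, y$ then $x\, F_n\, y$ for some $n$, hence $x\, F_m\, y$ and $g_m(x) = g_m(y)$ for every $m \geq n$ (using that the $F_m$ increase), so $f(x)\, E_1\, f(y)$; conversely if $f(x)\, E_1\, f(y)$ then $g_n(x) = g_n(y)$ for some $n$, so $x\, F_n\, y$ and $x\, E\, y$. Hence $E \leq_B E_1$.

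For the $E_0$ statement, one direction is easy: writing $x\, F_n\, y \iff x_m = y_m$ for all $m \geq n$ exhibits $E_0$ as the increasing union of the \emph{finite} Borel equivalence relations $F_n$, so $E_0$ is hyperfinite, and therefore $E \leq_B E_0$ already witnesses that $E$ is essentially hyperfinite. Conversely, if $E$ is essentially hyperfinite, fix a Borel reduction of $E$ to some hyperfinite $F$; since $\leq_B$ is transitive it suffices to show $F \leq_B E_0$, and this is exactly the theorem of Dougherty, Jackson, and Kechris \cite{dougherty_jackson_kechris}.

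I expect this last step to be the main obstacle, or rather the one place where a substantial external result is unavoidable: the coordinatewise argument used for $E_1$ does not transfer, because the finite pieces $F_n$ of a hyperfinite relation, while smooth, have (on an uncountable space) uncountably many classes and so are not reducible to $=$ on $\mathbb{N}$, so one cannot simply assemble $\mathbb{N}$-valued invariants into a point of $\mathbb{N}^{\mathbb{N}}$. The proof of $F \leq_B E_0$ instead goes through representing the hyperfinite $F$ as the orbit equivalence relation of a Borel $\mathbb{Z}$-action and performing a marker/selector construction, and I would cite it rather than reproduce it.
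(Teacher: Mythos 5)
The paper does not actually prove this proposition; it cites it to \cite{dougherty_jackson_kechris} and \cite{kechris_louveau}, so there is no in-paper argument to compare against. Your treatment of the $E_1$ biconditional is the standard argument and is correct in both directions, and you correctly isolate the Dougherty--Jackson--Kechris theorem as the substantial external input for the implication (essentially hyperfinite $\Rightarrow E\leq_B E_0$).

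There is, however, a genuine slip in the direction you label easy. You claim that the tail-agreement relations $F_n$, given by $x\,F_n\,y \iff \forall m\geq n\,(x_m=y_m)$, exhibit $E_0$ as an increasing union of \emph{finite} Borel equivalence relations. But the paper defines $E_0$ on $\mathbb{N}^{\mathbb{N}}$, not on $2^{\mathbb{N}}$, and on $\mathbb{N}^{\mathbb{N}}$ the $F_n$-class of $x$ is the set of all $y$ with $y\upharpoonright[n,\infty)=x\upharpoonright[n,\infty)$, which is in Borel bijection with $\mathbb{N}^n$ and hence countably infinite for $n\geq 1$. So these $F_n$ only witness that $E_0$ is hypersmooth and countable, not hyperfinite, and the argument as written fails. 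The conclusion you want is true, but it is not elementary here: one either has to reduce $E_0^{\mathbb{N}^{\mathbb{N}}}$ to $E_0^{2^{\mathbb{N}}}$ (where the analogous $F_n$ really are finite, with classes of size $2^n$) and this reduction is itself not the obvious unary encoding, which does not preserve eventual agreement; or one invokes the fact that a hypersmooth essentially countable Borel equivalence relation is essentially hyperfinite (Corollary~\ref{hypersmoothtohyperfinite}), which rests on the Kechris--Louveau dichotomy. Either way, both directions of the $E_0$ claim on $\mathbb{N}^{\mathbb{N}}$ lean on the same nontrivial machinery, contrary to what your write-up suggests; you should replace the assertion of finiteness with one of these correct justifications.
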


Another basic fact that we will take advantage of is the following:
\begin{lemma}\label{hypersmoothunion} A countable disjoint union of hypersmooth equivalence relations is hypersmooth.
\end{lemma}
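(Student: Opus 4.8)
The plan is to work directly with the characterization via $E_1$ from the preceding Proposition. Suppose $E = \bigsqcup_{n\in\mathbb{N}} E_n$ is a countable disjoint union of hypersmooth equivalence relations, where $E_n$ lives on a standard Borel space $X_n$ and $E$ lives on $X = \bigsqcup_n X_n$; without loss of generality we may take $X$ to be a standard Borel space containing each $X_n$ as a Borel piece, with the $X_n$ pairwise disjoint. By the Proposition, for each $n$ there is a Borel reduction $f_n : X_n \to \mathbb{R}^{\mathbb{N}}$ witnessing $E_n \leq_B E_1$. The goal is to glue the $f_n$ together into a single Borel reduction $f : X \to \mathbb{R}^{\mathbb{N}}$ of $E$ to $E_1$, taking care that points from different $X_n$ are never identified by $E_1$.

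The key trick is a ``tagging'' of the sequences by the index $n$ in a way that is invisible to eventual agreement within a fixed $n$ but fatal to agreement across different $n$. Fix an injection $n \mapsto q_n$ from $\mathbb{N}$ into $\mathbb{R}$, say $q_n = n$. For $x \in X_n$, write $f_n(x) = (f_n(x)_0, f_n(x)_1, \dots) \in \mathbb{R}^{\mathbb{N}}$ and define
\[
  f(x) = \bigl(q_n, f_n(x)_0, f_n(x)_1, f_n(x)_2, \dots\bigr) \in \mathbb{R}^{\mathbb{N}},
\]
i.e., prepend the tag $q_n$ and shift the original sequence. Since each $X_n$ is Borel in $X$ and each $f_n$ is Borel, $f$ is Borel. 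Now if $x, y$ lie in the same piece $X_n$, then $f(x) \mathrel{E_1} f(y)$ iff the tails of the shifted sequences eventually agree, which (the constant initial tag being harmless) holds iff $f_n(x) \mathrel{E_1} f_n(y)$, i.e., iff $x \mathrel{E_n} y$, i.e., iff $x \mathrel{E} y$. If $x \in X_n$ and $y \in X_m$ with $n \neq m$, then $f(x)$ and $f(y)$ have every entry in coordinate $0$ equal to $q_n \neq q_m$ — more precisely, for $\vec{u} \mathrel{E_1} \vec{v}$ we in particular need $u_0 = v_0$ only after the tail, but we can instead place the tag in infinitely many coordinates to be safe: redefine $f(x)$ for $x \in X_n$ by interleaving, e.g. $f(x)_{2k} = q_n$ and $f(x)_{2k+1} = f_n(x)_k$. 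Then $f(x) \mathrel{E_1} f(y)$ forces $q_n = q_m$ hence $n = m$, and within a piece the even coordinates are eventually (indeed always) equal so $E_1$-equivalence again reduces to $f_n(x) \mathrel{E_1} f_n(y)$. Thus $f$ reduces $E$ to $E_1$, and $E$ is hypersmooth.

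The only mild subtlety — and the one place to be careful — is exactly this business of where the index tag is recorded: a single initial tag is erased by eventual agreement, so the tag must appear in infinitely many coordinates (as in the interleaving above) to separate distinct pieces, while still appearing in a pattern that is eventually constant within a piece so as not to interfere with the reduction there. Everything else is routine: $f$ is Borel because it is defined piecewise on the Borel sets $X_n$ by Borel formulas, and the verification that $f$ is a reduction is the two-case check just described.
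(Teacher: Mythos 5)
Your proof is correct and takes essentially the same approach as the paper: reduce to $E_1$ coordinatewise and embed the piece index into every coordinate of the image sequence so that points in distinct pieces can never become eventually equal. The paper implements this by applying a Borel pairing function $\phi:\mathbb{R}\times\mathbb{N}\to\mathbb{R}$ to each coordinate rather than interleaving the tag into the even coordinates, but it is the same trick, and your observation that a single initial tag would be erased by eventual agreement is precisely the point.
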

\begin{proof} Suppose $X=\bigcup_{n}X_n$ with each $X_{n}$ Borel and $i\neq j\Rightarrow X_{i}\cap X_{j}=\emptyset$, and suppose each $F_{n}$ is a hypersmooth equivalence relation on $X_{n}$. Then let each $f_{n}:X_{n}\rightarrow\mathbb{R}^\mathbb{N}$ be a Borel reduction of $F_{n}$ to $E_1$, i.e., $x F_{n} y\Leftrightarrow \vec{f_{n}(x)}E_1\vec{f_{n}(y)}$. Letting $\phi:\mathbb{R}\times\mathbb{N}\rightarrow\mathbb{R}$ be a Borel isomorphism and defining $n_{x}=m\Leftrightarrow x\in X_{m}$, it follows that $x\mapsto\langle\phi(\langle f_{n_x}(x)\rangle_{k}, n_{x})\rangle_{k\in\mathbb{N}}$ is a Borel reduction of the union equivalence $E=\bigcup_{n}F_{n}$ to $E_1$.
\end{proof}

Kechris and Louveau established the following dichotomy between the essentially hyperfinite and hypersmooth relations.
\begin{theorem}\label{hypersmoothdichotomy}\emph{(Kechris--Louveau, \cite{kechris_louveau})} Let $X$ be a standard Borel space and $E$ a hypersmooth equivalence relation on $X$. Then either $E$ is essentially hyperfinite or $E$ is bireducible to $E_1$.
\end{theorem}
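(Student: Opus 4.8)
The plan is to extract the dichotomy from the internal $E_{1}$-structure carried by a hypersmooth relation: after a normalization I reduce everything to the implication that if $E$ is not essentially hyperfinite then $E_{1}\leq_{B}E$, and then I produce a copy of $E_{1}$ inside $E$ by a perfect-branching Cantor-scheme construction. For the normalization, write $E=\bigcup_{n}F_{n}$ with $F_{0}\subseteq F_{1}\subseteq\cdots$ smooth Borel, and by the definition of smoothness fix Borel maps $g_{n}:X\arr\mathbb{R}$ with $x\,F_{n}\,y\iff g_{n}(x)=g_{n}(y)$. Since the $F_{n}$ increase, $x\,E\,y\iff\exists n\,\forall m\geq n\,(g_{m}(x)=g_{m}(y))$, so $G(x)=(g_{n}(x))_{n\in\mathbb{N}}$ is a Borel reduction of $E$ to $E_{1}$ which at the same time carries each $F_{n}$ to the sub-relation of $E_{1}$ given by agreement past coordinate $n$. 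In particular $E\leq_{B}E_{1}$ (this is just the forward direction of the Proposition above relating hypersmoothness to $E_{1}$), so it is enough to show that non-essential-hyperfiniteness of $E$ implies $E_{1}\leq_{B}E$; together with $E\leq_{B}E_{1}$ this makes $E$ bireducible to $E_{1}$, and the two horns are mutually exclusive because $E_{1}$ is itself not essentially hyperfinite, i.e. $E_{1}\nleq_{B}E_{0}$.

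Suppose then that $E$ is not essentially hyperfinite. A hypersmooth relation that is essentially countable is already essentially hyperfinite, so $E$ is not essentially countable; thus there is no Borel way to ``finitely bound'' the tower $(F_{n})$ inside the $E$-classes, and this failure survives restriction to every Borel $E$-invariant set. I would argue that this failure has a robust, definable source. Each quotient $X/F_{n}$ embeds into $\mathbb{R}$ via $g_{n}$ with analytic image, so by the perfect set property for analytic sets each level is either ``countable'' or carries a Cantor set of distinct $F_{n}$-classes; the content of non-essential-countability should be that, after passing to an $E$-invariant Borel piece, there are infinitely many levels $n$ at which a single $F_{n+1}$-class contains a Cantor set of pairwise $F_{n}$-inequivalent points, with these perfect sets moreover chosen nesting compatibly up the tower and Borel-uniformly in the class. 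Extracting such a \emph{coherent system of perfect branchings} at infinitely many levels simultaneously --- rather than one level at a time --- is the main obstacle, and this is the point at which I expect the argument to require the effective machinery of the Gandy--Harrington topology, in the style of the Harrington--Kechris--Louveau proof of the Glimm--Effros dichotomy.

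Granting such a coherent system, one builds by recursion along a tree indexed by finite sequences of finite binary strings a continuous injection $\Phi:(2^{\mathbb{N}})^{\mathbb{N}}\arr X$ with the property that $\vec{\alpha}$ and $\vec{\beta}$ agree past coordinate $n$ exactly when $\Phi(\vec{\alpha})\,F_{n}\,\Phi(\vec{\beta})$; hence $\vec{\alpha}\,E_{1}\,\vec{\beta}\iff\Phi(\vec{\alpha})\,E\,\Phi(\vec{\beta})$. Since eventual agreement on $(2^{\mathbb{N}})^{\mathbb{N}}$ is Borel bireducible with $E_{1}$ --- a Borel injection $2^{\mathbb{N}}\hookrightarrow\mathbb{R}$ applied coordinatewise gives one reduction and the identity the other --- this yields $E_{1}\leq_{B}E$ and completes the proof. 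The delicate bookkeeping in the recursion is exactly what non-essential-countability underwrites: at each node one must split the current approximating piece into a perfect family of \emph{genuinely new} $F_{n}$-classes while never accidentally dropping two distinct branches into one $E$-class.
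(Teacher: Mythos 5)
The paper does not prove this theorem; it cites it from Kechris--Louveau, so there is no ``paper's own proof'' to compare against. Judging your proposal on its own terms: the preliminary reductions are fine --- writing $E=\bigcup_n F_n$, obtaining the Borel reduction $G(x)=(g_n(x))_n$ of $E$ to $E_1$, observing that the two horns are mutually exclusive because $E_1\not\leq_B E_0$, and noting that bireducibility with $E_1$ follows once $E_1\leq_B E$ is established --- all of this is correct and is the standard framing. Your closing remark that eventual agreement on $(2^{\mathbb{N}})^{\mathbb{N}}$ is bireducible with $E_1$ is also correct.

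But the proof has a genuine gap, and you yourself flag it: the passage from ``$E$ is not essentially countable'' to the existence of a \emph{coherent system of perfect branchings} nested compatibly up the tower and Borel-uniformly in the class is asserted, not proved (``I would argue that this failure has a robust, definable source,'' ``the content of non-essential-countability \emph{should} be that\ldots,'' ``I expect the argument to require the effective machinery''). That step \emph{is} the theorem. In the Kechris--Louveau argument, the dichotomy is established directly by a Gandy--Harrington reflection/forcing argument on the tower $(F_n)$, and the corollary that a hypersmooth essentially countable relation is essentially hyperfinite falls out afterwards --- your proposal inverts this logical order. A subsidiary concern: you invoke that corollary (``A hypersmooth relation that is essentially countable is already essentially hyperfinite'') as a step in the proof, but in this paper that fact is Corollary~\ref{hypersmoothtohyperfinite}, derived \emph{from} the dichotomy you are trying to prove. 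The circularity is not fatal --- one can prove it independently via ``a countable smooth Borel equivalence relation is hyperfinite'' plus the Dougherty--Jackson--Kechris theorem that increasing unions of hyperfinite are hyperfinite --- but you would need to say so. Even granting that, the hard work (extracting the perfect branchings, running the recursion so that distinct branches never collapse into a single $E$-class, and showing that the only obstruction to essential hyperfiniteness is a definable copy of $E_1$) is left entirely as a black box, so the proposal as written is an outline of intent rather than a proof.
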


Another important property of $E_1$ is that it cannot be reduced to any countable Borel equivalence relation.
\begin{theorem}\label{E1notcountable}\emph{(\cite{kechris_louveau} or \S 5 of \cite{kechris})} If $F$ is a countable Borel equivalence relation, then $E_1\not\leq_{B} F$.
\end{theorem}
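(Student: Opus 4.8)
The plan is to argue by contradiction. So suppose $f\colon\mathbb{R}^{\mathbb{N}}\arr Y$ is a Borel reduction of $E_1$ to a countable Borel equivalence relation $F$ on $Y$. Fix a nonatomic Borel probability measure $\mu_{0}$ on $\mathbb{R}$ and put $\mu=\mu_{0}^{\mathbb{N}}$ on $\mathbb{R}^{\mathbb{N}}=\prod_{n}\mathbb{R}$; by the Feldman--Moore theorem write $F=E^{Y}_{G}$ for a Borel action of a countable group $G$ on $Y$, and by a standard reduction assume the action is free. The first observation is an ergodicity statement: a Borel subset of $\mathbb{R}^{\mathbb{N}}$ that is a union of $E_1$-classes is invariant under changing finitely many coordinates, so it lies in the tail $\sigma$-algebra $\mathcal{T}_{\infty}=\bigcap_{n}\mathcal{T}_{n}$, where $\mathcal{T}_{n}$ is generated by the coordinates $\geq n$, and hence by Kolmogorov's zero--one law it is $\mu$-null or $\mu$-conull. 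Since $f$ is a reduction, $f^{-1}(B)$ is a union of $E_1$-classes whenever $B$ is a union of $F$-classes, so $\nu:=f_{*}\mu$ assigns measure $0$ or $1$ to every $F$-invariant Borel set; moreover $\nu$ is nonatomic, because every $E_1$-class is $\mu$-null (it is a countable union of sets $\{\vec{y}:y_{m}=x_{m}\text{ for all }m\geq n\}$, each of $\mu$-measure $0$) while every fibre of $f$ lies inside a single $E_1$-class.

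Next I would peel off a coordinate using a cocycle. Fix a ``tail'' $\vec{t}=(x_{1},x_{2},\dots)$ and for $r\in\mathbb{R}$ write $(r,\vec{t})$ for the sequence $(r,x_{1},x_{2},\dots)$. Any $(r,\vec{t})$ and $(s,\vec{t})$ are $E_1$-equivalent, so by freeness there is a unique $c(r,s,\vec{t})\in G$ with $f(r,\vec{t})=c(r,s,\vec{t})\cdot f(s,\vec{t})$, and $(r,s)\mapsto c(r,s,\vec{t})$ is a Borel $G$-valued cocycle for the full equivalence relation on $\mathbb{R}$ (every pair of which is related), hence trivially a coboundary $c(r,s,\vec{t})=b(r,\vec{t})\,b(s,\vec{t})^{-1}$ with $b(r,\vec{t}):=c(r,r_{0},\vec{t})$ for a fixed base point $r_{0}$. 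Then $b(x_{0},\vec{t})^{-1}\cdot f(x_{0},\vec{t})$ does not depend on $x_{0}$; calling it $f_{1}(\vec{t})$, a direct check using the $G$-invariance of $F$ and the fact that $f$ is a reduction shows $f_{1}$ is again a Borel reduction of $E_1$ to $F$, and $f(\vec{x})=b(x_{0},\vec{t})\cdot f_{1}(\vec{t})$. Iterating this produces Borel reductions $f_{n}$ and Borel maps $\beta_{n}\colon\mathbb{R}^{\mathbb{N}}\arr G$ with $f(\vec{x})=\beta_{n}(\vec{x})\cdot f_{n}(x_{n},x_{n+1},\dots)$ for every $n$; in particular $\vec{x}\mapsto\beta_{n}(\vec{x})^{-1}\cdot f(\vec{x})$ is $\mathcal{T}_{n}$-measurable, i.e.\ $f$ agrees, up to the $G$-action, with a function measurable with respect to arbitrarily fine tail $\sigma$-algebras.

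From here the aim is to conclude that $f$ is $\mu$-a.e.\ valued in a single $F$-orbit $G\cdot y_{*}$. Granting that, $f^{-1}(G\cdot y_{*})$ is $\mu$-conull and, since $f$ is a reduction, contained in a single $E_1$-class, which contradicts the ergodicity statement above (every $E_1$-class is $\mu$-null). The heuristic is compelling: the ``tail representative'' of $f$ is $\mathcal{T}_{n}$-measurable for every $n$ and $\mathcal{T}_{\infty}$ is $\mu$-trivial, so this representative ought to stabilize $\mu$-a.e.\ to a $\mathcal{T}_{\infty}$-measurable, hence $\mu$-a.e.\ constant, value. I expect this final step to be the entire difficulty: the twisting elements $\beta_{n}$ are not themselves tail-measurable, so one cannot pass to a limit naively, and the argument must be completed either by a careful reverse-martingale / zero--one analysis that controls the successive twists $\beta_{n+1}\beta_{n}^{-1}$, or by a Baire-category substitute --- refining topologies so that $f$ becomes continuous and running a Kuratowski--Ulam fusion along the coordinates (each coordinate can be changed without leaving an $E_1$-class, on whose slices $f$ takes only countably many values, hence is constant on a comeager subset) to produce $E_1$-inequivalent points with $F$-equivalent $f$-images, directly contradicting that $f$ is a reduction. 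Everything before this last step is routine bookkeeping.
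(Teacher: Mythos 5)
The paper cites this theorem from Kechris--Louveau rather than proving it, so I am comparing against the standard Baire-category fusion argument. Your set-up is correct as far as it goes --- the observation that every $E_1$-invariant Borel set is tail-measurable and hence $\mu$-trivial, that $\nu=f_{*}\mu$ is $F$-ergodic and nonatomic, and the cocycle ``peeling'' giving $f(\vec x)=\beta_n(\vec x)\cdot f_n(x_n,x_{n+1},\dots)$ are all sound. But you have correctly sensed, and I must confirm, that the final step is not just the hard part: the measure-theoretic route you sketch to close it cannot work in the form you describe.

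The claim you need is that the orbit map $\vec x\mapsto G\cdot f(\vec x)$ is $\mu$-a.e.\ constant. What your 0--1 law argument actually gives is weaker: that for every Borel $F$-invariant $B\subseteq Y$, the set $f^{-1}(B)$ is $\mu$-null or $\mu$-conull (equivalently, $\nu$ is $F$-ergodic). For a \emph{smooth} $F$ that would indeed force concentration on one orbit, because then the quotient $Y/F$ is countably separated and a Borel map into it with a trivial pushforward is essentially constant. But for non-smooth $F$ the $F$-invariant Borel sets do \emph{not} separate orbits, and ergodic nonatomic measures exist --- the uniform measure on $2^{\mathbb{N}}$ for $E_0$ is the canonical example. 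So ``each $f_n$ is $\mathcal{T}_n$-measurable, hence the limiting orbit map is $\mathcal{T}_{\infty}$-measurable, hence constant'' breaks precisely because $Y/F$ is not a standard Borel space; the reverse-martingale/zero--one mechanism has no purchase on a quotient that is not countably separated. (Your own warning that the $\beta_n$ fail to be tail-measurable is a symptom of the same obstruction.) The actual proof in the literature is the Baire-category fusion you mention in the last sentence: refine topologies to make $f$ continuous, then build a Cantor scheme of $E_1$-inequivalent sequences whose $f$-images fall into a single $F$-orbit by controlling, comeagerly on each slice, which group element realizes the $F$-equivalence between $f(\langle 0,\vec t\rangle)$ and $f(\langle 1,\vec t\rangle)$. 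That argument is several paragraphs of genuine work, not a ``substitute,'' and your proposal does not carry it out. As written the proposal is an honest but incomplete reduction of the problem to exactly the step that constitutes the theorem.
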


Combining Theorems \ref{hypersmoothdichotomy} and \ref{E1notcountable}, we have the following.
\begin{corollary}\label{hypersmoothtohyperfinite} If $E$ is both hypersmooth and essentially countable, then $E$ is essentially hyperfinite.\end{corollary}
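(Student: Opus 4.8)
The plan is simply to feed $E$ into the Kechris--Louveau dichotomy (Theorem~\ref{hypersmoothdichotomy}) and then use Theorem~\ref{E1notcountable} to rule out the bad alternative. First I would unpack the hypothesis ``essentially countable'': fix a countable Borel equivalence relation $F$ on some standard Borel space together with a Borel function witnessing $E \leq_B F$. Since $E$ is hypersmooth, Theorem~\ref{hypersmoothdichotomy} presents exactly two possibilities: either $E$ is essentially hyperfinite, in which case there is nothing more to prove, or $E$ is bireducible to $E_1$, which in particular gives $E_1 \leq_B E$.

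In the second case I would just compose reductions. Borel reducibility is transitive, since a composition of Borel maps is Borel, so from $E_1 \leq_B E$ and $E \leq_B F$ we get $E_1 \leq_B F$ with $F$ a countable Borel equivalence relation. This directly contradicts Theorem~\ref{E1notcountable}. Therefore the second alternative cannot occur, and $E$ must be essentially hyperfinite.

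There is essentially no obstacle to overcome here; the content has already been packaged into the two quoted theorems, and the corollary is just their conjunction together with transitivity of $\leq_B$. The only point worth a moment's attention is that ``essentially countable'' is used in the sense defined earlier, namely Borel reducibility to \emph{some} countable Borel equivalence relation rather than a priori to $E_0$, but that is precisely the statement Theorem~\ref{E1notcountable} negates, so the argument applies as written.
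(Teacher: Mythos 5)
Your proof is correct and is exactly the argument the paper intends: it presents the corollary as the immediate combination of Theorem~\ref{hypersmoothdichotomy} and Theorem~\ref{E1notcountable}, which is precisely what you carry out, with the second alternative ruled out by transitivity of $\leq_B$. Nothing to add.
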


\subsection{Topological Groups and Orbit Equivalence Relations}
A \emph{topological group} is a group $G$ together with a topology on $G$ such that the group operation and inverse function are continuous, and a \emph{Polish group} is a topological group whose topology is Polish. More generally, a \emph{standard Borel group} is a group $G$ together with a standard Borel structure such that the group operations are Borel. So all Polish groups are standard Borel groups, and any Borel subgroup of a Polish group is a standard Borel group. However, not all standard Borel groups are Polish groups since there may not exist a Polish topology which induces the Borel structure while at the same time making the operations continuous.

By \emph{orbit equivalence relation}\footnote{One should note that the term ``orbit equivalence relation'' more commonly refers to an action of a Polish group in the literature, and in fact any analytic equivalence relation is the orbit equivalence relation of \emph{some} standard Borel group (but not necessarily of a Polish group).} we mean an equivalence relation on a standard Borel space where the equivalence classes are precisely the orbits of a Borel action on the space by a standard Borel group, i.e., an orbit equivalence relation $E$ on $X$ is such that there exists such a group $G$ and a Borel action $(g,x)\mapsto g\cdot x$ where $x\, E\, y\iff \exists g\in G\, \text{ s.t. } y=g\cdot x\,$. In the case when $E$ is an orbit equivalence relation on $X$ given by a Borel action of some group $G$, we will often summarize the situation by writing $E$ instead as $E^X_G$.

In the case where the acting group is Polish, Becker and Kechris provide the following powerful theorem.
\begin{theorem}\label{continuousaction}\emph{(Becker--Kechris, \cite{becker_kechris})}
Given a Borel action of a Polish group $G$ on a standard Borel space $X$, there exists a Polish topology on $X$ so that the action is continuous.
\end{theorem}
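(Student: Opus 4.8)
The plan is to prove this by the method of \emph{Vaught transforms}, which is the standard tool for such ``change of topology'' results. Fix any Polish topology $\tau_0$ on $X$ generating its Borel structure; then the action map $a\colon G\times X\to X$, $a(g,x)=g\cdot x$, is Borel. The basic fact about Polish spaces we will invoke (see \cite{kechris}, \S 13) is that, given countably many Borel subsets of a Polish space, there is a finer zero-dimensional Polish topology with the \emph{same} Borel $\sigma$-algebra in which all of those sets are clopen. Applying this naively to the $a$-preimages of a basis of $(X,\tau_0)$ only makes $a$ continuous into the \emph{old} topology on the target copy of $X$; refining the target forces a refinement of $G\times X$, which forces a further refinement of the target, with no apparent fixed point. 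The role of the Vaught transforms is to pick out, in one stroke, a countable family of Borel sets that is already ``closed'' with respect to the action.

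For a Borel set $A\subseteq X$ and an open set $U\subseteq G$, define $A^{*U}=\{x:\{g\in U:g\cdot x\in A\}\ \text{is non-meager in}\ U\}$ and $A^{\triangle U}=\{x:\{g\in U:g\cdot x\in A\}\ \text{is comeager in}\ U\}$, so that $A^{\triangle U}=X\setminus(X\setminus A)^{*U}$. Two things make the argument work. First, if $A$ is Borel then $A^{*U}$ and $A^{\triangle U}$ are Borel: this is the closure of the Borel pointclasses under the category quantifiers, which rests on the Montgomery--Novikov and Kuratowski--Ulam theorems, and it is here that one uses $G$ (hence each $U$) being Polish, so Baire. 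Second, the transforms obey a calculus: they are monotone in the parameter $U$, they satisfy the translation identity $\{x:h\cdot x\in A^{*U}\}=A^{*(Uh)}$, and — crucially — the composition identity $(A^{*U})^{*V}=A^{*(UV)}$. The composition identity is used repeatedly in the continuity argument below and is also the reason no transfinite iteration of the construction is needed.

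Fix a countable basis $\mathcal{V}$ of $G$ that is closed under finite intersections, closed under $(U,V)\mapsto UV$, and contains a neighborhood base at $1_G$, and fix a countable Boolean algebra $\mathcal{A}_0$ of Borel subsets of $X$ that generates the Borel structure and separates points. Let $\mathcal{A}$ be the smallest Boolean algebra of subsets of $X$ containing $\mathcal{A}_0$ and closed under $A\mapsto A^{*U}$ for all $U\in\mathcal{V}$; by the composition identity and $A^{\triangle U}=X\setminus(X\setminus A)^{*U}$, this algebra is automatically closed under all the transforms $A\mapsto A^{*U}$ and $A\mapsto A^{\triangle U}$, $U\in\mathcal{V}$, and it remains a countable algebra of Borel sets that generates the Borel $\sigma$-algebra and separates points. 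Let $\tau$ be the topology on $X$ generated by $\mathcal{A}$. The two claims to verify are: (i) $\tau$ is Polish with the same Borel $\sigma$-algebra as $\tau_0$; and (ii) the action $a\colon G\times(X,\tau)\to(X,\tau)$ is continuous — for which it is enough to see that $a^{-1}(A)$ is $\tau$-open for each $A\in\mathcal{A}$, and this one does by taking any $(h_0,x_0)$ with $h_0\cdot x_0\in A$ and producing, via a Baire-category argument in $G$ together with the transform identities and the closure properties of $\mathcal{A}$, an open $U\ni h_0$ and a set $W\in\mathcal{A}$ with $x_0\in W$ and $U\cdot W\subseteq A$.

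I expect step (ii), the verification of continuity, to be the main obstacle, and it is the technical core of the Becker--Kechris theorem. The delicate point is that one must feed $h_0\cdot x_0\in A$ into just the right combination of $*$- and $\triangle$-transforms so that it genuinely yields an \emph{open} box around $(h_0,x_0)$ mapping into $A$ — the naive guesses fail because comeagerness does not propagate between neighborhoods — and the Baire category theorem for $G$ is what bridges the gap, which is precisely why the hypothesis is that $G$ is \emph{Polish} rather than merely a standard Borel group. A secondary subtlety is (i), that the topology generated by the transform-closed algebra $\mathcal{A}$ is not merely separable metrizable but Polish; this too is secured by the closure properties of $\mathcal{A}$. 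The remaining ingredients — Borelness of the Vaught transforms, their algebraic calculus, and the refinement lemma for Polish topologies — are standard, if not entirely trivial, and can be cited from \cite{kechris}, \cite{becker_kechris}, or \cite{gao}.
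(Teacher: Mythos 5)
The paper does not prove this; it is cited from Becker--Kechris \cite{becker_kechris}, so there is no in-paper argument to compare against. Your proposal correctly identifies the Vaught-transform framework underlying the Becker--Kechris proof, and the setup you describe --- the countable basis $\mathcal{V}$ closed under products, the transform-closed algebra $\mathcal{A}$, the duality $A^{\triangle U}=X\setminus(X\setminus A)^{*U}$, and the composition law $(A^{*U})^{*V}=A^{*(UV)}$ --- is the right scaffolding and is correctly stated.

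The gap is that you defer both substantive claims rather than proving them: you explicitly call step (ii) ``the technical core'' and describe only the target of the continuity argument (an open box $U\times W\subseteq a^{-1}(A)$) without producing it, and you assert step (i) --- that the $\mathcal{A}$-topology $\tau$ is Polish --- is ``secured by the closure properties of $\mathcal{A}$'' without saying how. Step (i) is not automatic: a countable Boolean algebra of Borel sets that separates points and generates the Borel $\sigma$-algebra need not generate a Polish topology, because the image of $x\mapsto(\chi_A(x))_{A\in\mathcal{A}}$ in $2^{\mathcal{A}}$ need not be $G_\delta$. One has to either choose $\mathcal{A}_0$ to be the clopen algebra of a zero-dimensional Polish refinement of $\tau_0$ (so that the Kechris change-of-topology lemma applied to the countably many transforms yields exactly the $\mathcal{A}$-topology), or give a strong Choquet argument tailored to the transforms. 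Step (ii) likewise requires the actual Baire-category interplay of $*$- and $\triangle$-transforms, not a description of the goal. A smaller imprecision: the composition identity does not by itself show that the Boolean-algebra-and-$*$-closure of $\mathcal{A}_0$ stabilizes after one round (since $*U$ distributes over unions but not intersections); it controls iterated transforms of single generating sets, and countability of $\mathcal{A}$ follows from the closure being under countably many operations --- the ``no transfinite iteration'' remark as written is neither justified nor needed. In short: correct road map, but the two places where the theorem is actually proved are precisely the two places you leave open, and ``the remaining ingredients are standard and can be cited'' does not fill them because they \emph{are} those ingredients.
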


\begin{corollary}\label{closedstabilizers}Given a Borel action of a Polish group $G$ on a standard Borel space $X$, each stabilizer $G_{x}=\{g\in G:g\cdot x=x\}$ is a closed subgroup of $G$.
\end{corollary}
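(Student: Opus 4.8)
The plan is to deduce this immediately from Theorem~\ref{continuousaction}. First I would apply the Becker--Kechris theorem to replace the given standard Borel structure on $X$ by a compatible Polish topology $\tau$ with respect to which the action map $(g,x)\mapsto g\cdot x$ is continuous from $G\times X$ to $(X,\tau)$. Then I would fix $x\in X$ and consider the orbit map $a_{x}:G\arr X$ defined by $a_{x}(g)=g\cdot x$. Since $g\mapsto(g,x)$ is continuous from $G$ into $G\times X$ and $a_{x}$ is the composition of this map with the (now continuous) action map, $a_{x}$ is continuous.

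Next, since every Polish topology is metrizable and hence Hausdorff, the singleton $\{x\}$ is closed in $(X,\tau)$, so $G_{x}=a_{x}^{-1}(\{x\})$ is a closed subset of $G$. It then remains only to observe that $G_{x}$ is a subgroup, which is a routine consequence of the action axioms: $e\cdot x=x$ gives $e\in G_{x}$; if $g,h\in G_{x}$ then $(gh)\cdot x=g\cdot(h\cdot x)=g\cdot x=x$; and if $g\in G_{x}$ then $g^{-1}\cdot x=g^{-1}\cdot(g\cdot x)=(g^{-1}g)\cdot x=x$. Hence $G_x$ is a closed subgroup.

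The only point requiring care --- and it is exactly the content of Theorem~\ref{continuousaction} rather than anything one needs to supply by hand --- is that for a merely Borel action the orbit map need not be continuous in any topology compatible with the original Borel structure, so a priori preimages of closed sets need not be closed. Once the topology on $X$ has been adjusted so that the action is continuous, the conclusion is immediate, so I expect no genuine obstacle in the argument beyond citing the theorem correctly.
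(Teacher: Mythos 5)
Your argument is exactly the one the paper has in mind: it states that Corollary~\ref{closedstabilizers} ``follows easily from Theorem~\ref{continuousaction},'' and your proof simply fills in that easy deduction (retopologize $X$ via Becker--Kechris, note the orbit map $g\mapsto g\cdot x$ is continuous, and take the preimage of the closed singleton $\{x\}$). The proof is correct and matches the paper's intended route.
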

While Corollary \ref{closedstabilizers} follows easily from Theorem \ref{continuousaction}, it was originally proven directly by Miller in \cite{miller}.

They also show that the map from a point to its stabilizer is a Borel map into the Effros space of the acting group, and this fact will be important for some of our computations.
\begin{theorem}\label{boreltostabilizer}\emph{(Becker--Kechris, \cite{becker_kechris})}
Given a Borel action of a Polish group $G$ on a standard Borel space $X$, the map $x\mapsto G_x$ from $X$ into $F(G)$ is Borel if and only if $E^X_G$ is Borel.
\end{theorem}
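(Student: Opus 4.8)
The plan is to pass to a continuous action, dispose of the direction ``$x \mapsto G_x$ Borel $\implies E^X_G$ Borel'' by a direct selection argument, and then treat the reverse direction, which is the substantive one, via the coset structure of the fibres of the action map together with the Effros-space machinery. First I would invoke Theorem \ref{continuousaction} to replace the Borel structure on $X$ by a Polish topology which generates it and makes the action $a \colon G \times X \arr X$ continuous; this changes neither $E^X_G$, the stabilizers $G_x$, nor the Borel-measurability of $x \mapsto G_x$. With the action continuous the sets
\[ W = \{(x,g)\in X\times G : g\cdot x = x\}, \qquad R=\{(x,y,g)\in X\times X\times G : g\cdot x = y\} \]
are closed, their sections are $W_x = G_x$ and $R_{(x,y)} = T_{x,y} := \{g : g\cdot x = y\}$, and each $T_{x,y}$ is either empty (exactly when $\neg(x\,E^X_G\,y)$) or a left coset of the closed subgroup $G_x$ (Corollary \ref{closedstabilizers}).

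For the direction ($x\mapsto G_x$ Borel $\implies$ $E^X_G$ Borel): by the Kuratowski--Ryll-Nardzewski theorem (Theorem \ref{selection}) applied to $G$ there is a Borel map $f_0\colon F(G)\setminus\{\emptyset\}\arr G$ with $f_0(F)\in F$; since left translation induces a Borel action of $G$ on $F(G)$ and $x\mapsto G_x$ is Borel, the map $(g,x)\mapsto g G_x$ is Borel into $F(G)$, so $\rho(g,x):=f_0(gG_x)$ is a Borel element of the coset $gG_x$ depending only on that coset. Put $S=\{(g,x): \rho(g,x)=g\}$, a Borel set singling out exactly one representative of each coset $gG_x$ for each $x$, and let $\Psi\colon S\arr X\times X$ be $\Psi(g,x)=(x,g\cdot x)$. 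Then $\Psi$ is continuous, its image is $E^X_G$ (each pair $(x,g\cdot x)$ equals $\Psi(\rho(g,x),x)$), and it is injective on $S$ (if $\Psi(g,x)=\Psi(g',x')$ then $x=x'$ and $g^{-1}g'\in G_x$, so $g,g'$ represent the same coset and hence coincide). By the Luzin--Suslin theorem the image $E^X_G$ of the Borel injection $\Psi\restr S$ is Borel.

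The reverse direction ($E^X_G$ Borel $\implies$ $x\mapsto G_x$ Borel) is where the work lies. Since $E^X_G$ is Borel each orbit $[x]=(E^X_G)_x$ is Borel, so the continuous bijection $G/G_x \arr [x]$ induced by $g\mapsto g\cdot x$ (legitimate because $G_x$ is closed, whence $G/G_x$ is Polish) is a Borel isomorphism by Luzin--Suslin. What remains is to make this uniform in $x$: concretely one must show that $(x,y)\mapsto T_{x,y}$ is a Borel map into $F(G)$ --- specializing to $y=x$ then gives that $x\mapsto G_x = T_{x,x}$ is Borel --- which amounts to showing that for each open $V\subseteq G$ the set $\{(x,y) : \exists g\in V\ (g\cdot x = y)\}$, i.e.\ the projection to $X\times X$ of $R\cap(X\times X\times V)$, is Borel. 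This is the main obstacle: for a general closed set $R$ the projection of $R$ restricted to an open tube need not be Borel even when the full projection $\mathrm{proj}(R)=E^X_G$ is, so one genuinely has to exploit that the fibres $T_{x,y}$ are cosets of the $G_x$ (equivalently, that $\{T_{x,y} : y\in [x]\}$ partitions $G$ for each fixed $x$). I would carry this out following Becker--Kechris, using Vaught transforms and the fibred Effros-Borel calculus to propagate Borel-ness of $E^X_G$ to Borel-ness of the coset-valued map $(x,y)\mapsto T_{x,y}$, after which the theorem follows as above; Theorem \ref{borelisomorphism} then transfers the conclusion back to the original standard Borel space.
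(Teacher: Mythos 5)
The paper does not actually prove this theorem; it cites it as a black box from Becker--Kechris, so there is no in-paper proof to compare against. Your proposal must therefore stand on its own as a proof.

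Your forward direction (``$x\mapsto G_x$ Borel $\implies E^X_G$ Borel'') is correct and cleanly executed: passing to a continuous realization via Becker--Kechris, using Kuratowski--Ryll-Nardzewski to get a Borel coset selector $\rho(g,x)=f_0(gG_x)$, cutting down to the Borel transversal $S=\{(g,x):\rho(g,x)=g\}$, observing that $\Psi(g,x)=(x,g\cdot x)$ is a continuous injection on $S$ with image exactly $E^X_G$, and invoking Luzin--Suslin. The small things one might worry about --- that $(g,x)\mapsto gG_x$ is Borel into $F(G)$, that $S$ is Borel, that $\Psi\restr S$ is injective --- all check out.

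The reverse direction, however, is not proved. You correctly identify it as the substantive implication, correctly reduce it to the Borel-ness of $\{(x,y):T_{x,y}\cap V\neq\emptyset\}$ for open $V\subseteq G$, and correctly flag the obstruction: this set is the projection of the closed set $R\cap(X\times X\times V)$, and projections of Borel sets are in general only $\Sigma^1_1$, so one must genuinely exploit the coset structure of the fibres. But at precisely that point you stop and say you ``would carry this out following Becker--Kechris, using Vaught transforms and the fibred Effros-Borel calculus.'' That is a statement of intent, not an argument: you neither define the relevant Vaught transforms $B^{\triangle U}$, nor produce the identity expressing $\{x:G_x\cap V\neq\emptyset\}$ (or its $(x,y)$-fibred analogue) in terms of them, nor prove that those transforms of Borel sets are Borel. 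Since this is exactly the content of the theorem that resists a soft argument, the gap is not a detail to be filled in mechanically; as written, the proposal proves only the easy half of the equivalence. To close it you would need to actually reproduce (or reprove) the Vaught-transform computation from Becker--Kechris~7.1.2, or find a genuinely different route to the Borel-ness of $x\mapsto G_x$ from the Borel-ness of $E^X_G$.
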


The following definitions and theorems are useful. For proofs of these, we refer the reader to \cite{lacunary} or $\S 5.4$ of \cite{gao}.

We call an equivalence relation $E$ on a standard Borel space $X$ \emph{idealistic} if there is an assignment to each $E$-equivalence class $C$ of a nontrivial $\sigma$-ideal $I_C$ on $C$ such that for every Borel set $A\subseteq X^2$ the set $A_I$ where $x\in A_I \iff \{y\in [x]:(x,y)\in A\}\in I_{[x]}$ is Borel. By a \emph{Borel selector} for $E$, we mean a Borel function $s:X\rightarrow X$ such that for each $x,y\in X$, $s(x)E x$ and $xE y \Rightarrow s(x)=s(y)$

\begin{theorem}\label{idealistic}\emph{(Kechris, \cite{lacunary})} Let $E$ be an equivalence relation on a standard Borel space $X$. Then $E$ has a Borel selector iff $E$ is smooth and idealistic.\end{theorem}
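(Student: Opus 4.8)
The plan is to check both directions directly from the definitions; the forward direction is routine and the content lies in the backward one. For the forward direction, if $s$ is a Borel selector then $s$ itself is a reduction of $E$ to $=_X$ (if $x\mathrel{E}y$ then $s(x)=s(y)$, and if $s(x)=s(y)$ then $x\mathrel{E}s(x)=s(y)\mathrel{E}y$), so composing with a Borel embedding of $X$ into $\mathbb{R}$ shows $E$ is smooth; and assigning to each class $C$ the ideal $I_C=\{A\subseteq C:p_C\notin A\}$, where $p_C\in C$ is the common value of $s$ on $C$, witnesses idealisticness. Indeed this is a maximal ideal, in particular a nontrivial $\sigma$-ideal, and for Borel $A\subseteq X^2$ one has $x\in A_I\iff(x,s(x))\notin A$, which is Borel because $x\mapsto(x,s(x))$ is Borel.

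For the backward direction, fix a Borel reduction $f\colon X\to 2^{\mathbb{N}}$ of $E$ to equality and an idealistic assignment $C\mapsto I_C$. The first step is a change of topology: using the standard fact that the Borel structure of $X$ is generated by a zero-dimensional Polish topology in which $f$ is continuous, identify $X$ simultaneously Borel-isomorphically and homeomorphically with a \emph{closed} subset of $\mathbb{N}^{\mathbb{N}}$. Continuity of $f$ makes every class $[x]=f^{-1}(\{f(x)\})$ closed in $X$, hence closed in $\mathbb{N}^{\mathbb{N}}$. Writing $U_s=X\cap\{z:s\subseteq z\}$ for $s\in\mathbb{N}^{<\mathbb{N}}$, these sets are clopen, $U_\emptyset=X$, $U_s=\bigsqcup_i U_{s^{\frown}i}$, and since $X$ sits closed in $\mathbb{N}^{\mathbb{N}}$ a point $z$ lies in a closed set $F\subseteq X$ exactly when $U_s\cap F\neq\emptyset$ for every $s\subseteq z$.

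The second step defines the selector by a recursion that keeps the relevant piece of the class out of its ideal. Given $x$, build $z(x)\in\mathbb{N}^{\mathbb{N}}$ by letting $z(x)(n)$ be the least $i$ with $U_{(z(x)\restriction n)^{\frown}i}\cap[x]\notin I_{[x]}$. This is possible because $U_\emptyset\cap[x]=[x]\notin I_{[x]}$ by nontriviality, and because $U_{z(x)\restriction n}\cap[x]$ is the disjoint union of the countably many sets $U_{(z(x)\restriction n)^{\frown}i}\cap[x]$, which cannot all lie in the $\sigma$-ideal $I_{[x]}$. The invariant $U_{z(x)\restriction n}\cap[x]\notin I_{[x]}$ keeps each of these sets nonempty, so by closedness of $[x]$ we get $z(x)\in[x]$; put $s(x):=z(x)$. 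Then $s(x)\mathrel{E}x$; the map $s$ satisfies $x\mathrel{E}y\Rightarrow s(x)=s(y)$ because the recursion uses only $[x]$ and $I_{[x]}$; and $s$ is Borel because $x\mapsto z(x)$ is Borel by induction on the length of initial segments, the condition that $z(x)(n)=i$ reducing, on the Borel set $\{x:z(x)\restriction n=t\}$, to finitely many conditions of the form ``$U_{t^{\frown}j}\cap[x]\in I_{[x]}$'', each of which is Borel by applying idealisticness to the Borel set $X\times U_{t^{\frown}j}\subseteq X^2$.

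I expect the crux to be the change of topology: a priori the branch produced by the recursion need not name a point of $[x]$, and it is precisely by spending smoothness to make $f$ continuous that the classes become closed and the intersection along the constructed branch is forced to lie inside $[x]$. It is also worth noting that the bookkeeping requires splitting the clopen sets into countably many pieces, so the hypothesis genuinely needs to be a $\sigma$-ideal and not merely a finitely additive one, while the definability clause of ``idealistic'' is exactly what makes the recursion, and hence the selector, Borel.
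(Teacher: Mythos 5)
Your proof is correct and follows the standard argument from the cited references (Kechris \cite{lacunary}, Gao \cite{gao} \S 5.4): the paper itself defers the proof, so there is no in-paper argument to compare against, but your route — refining the topology via smoothness so that classes become closed in a closed subspace of $\mathbb{N}^{\mathbb{N}}$, then using the ideal to run a greedy recursion down the tree of clopen pieces, with the definability clause of idealisticness securing Borelness of the selector — is exactly the canonical one. The forward direction, taking $I_C$ to be the complement of the principal ultrafilter at $p_C = s(x)$ for $x \in C$, is likewise the standard witness.
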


In the case of of an orbit equivalence relation induced by an action of a Polish group, we may let $S\in I_{C}\Leftrightarrow\{g\in G:g\cdot x\in S\}$ is meager in $G$, and we get the following.
\begin{lemma}\label{borelselector}Let $G$ be a Polish group acting in a Borel manner on a Polish space $X$. Then $E_G^X$ is idealistic.\end{lemma}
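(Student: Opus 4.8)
The plan is to use the $\sigma$-ideal suggested in the statement of the lemma. For each orbit $C=[x]_{E^X_G}$ and each $S\subseteq C$, declare $S\in I_C$ exactly when $\{g\in G:g\cdot x\in S\}$ is meager in $G$. First I would check that this does not depend on which point $x\in C$ represents the orbit: if $x'=h\cdot x$ is another point of $C$, then $\{g\in G:g\cdot x'\in S\}=\{g\in G:(gh)\cdot x\in S\}=\{g\in G:g\cdot x\in S\}\cdot h^{-1}$, and since right translation by $h^{-1}$ is a homeomorphism of $G$ it carries meager sets to meager sets. The collection $I_C$ is a $\sigma$-ideal on $C$ because $S\mapsto\{g\in G:g\cdot x\in S\}$ respects inclusion and commutes with countable unions while the meager subsets of $G$ form a $\sigma$-ideal; and $I_C$ is nontrivial, i.e. $C\notin I_C$, since $\{g\in G:g\cdot x\in C\}=G$ and a Polish group, being completely metrizable, is a Baire space and hence not meager in itself.

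It remains to verify the definability requirement: for every Borel set $A\subseteq X^2$, the set $A_I$, where $x\in A_I\iff\{y\in[x]:(x,y)\in A\}\in I_{[x]}$, is Borel. Unwinding the definition of $I_{[x]}$,
\[
x\in A_I\iff \{g\in G:(x,g\cdot x)\in A\}\ \text{is meager in}\ G.
\]
Since $X$ and $G$ are Polish, hence second countable, the Borel structure on $X\times G$ is the product of the two Borel structures, and since the action is Borel the map $(x,g)\mapsto(x,g\cdot x)$ is a Borel map from $X\times G$ to $X^2$. Therefore
\[
A^{*}:=\{(x,g)\in X\times G:(x,g\cdot x)\in A\}
\]
is a Borel subset of $X\times G$, and $x\in A_I$ if and only if its vertical section $A^{*}_{x}=\{g\in G:(x,g)\in A^{*}\}$ is meager in $G$. (One could instead first invoke Theorem \ref{continuousaction} to assume the action continuous, but this is not needed.)

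Finally I would appeal to the classical theorem on Borel-definability of the category quantifier: if $B\subseteq X\times Y$ is Borel with $Y$ Polish, then $\{x\in X:B_x\ \text{is meager in}\ Y\}$ is Borel (see $\S 16$ of \cite{kechris}). Concretely, fixing a countable base $\{V_n\}$ for $Y$, a section $B_x$ is nonmeager iff it is comeager in some $V_n$, and ``$B_x$ is comeager in $V_n$'' is a Borel condition on $x$, proved by induction on the Borel rank of $B$. Applying this with $Y=G$ and $B=A^{*}$ shows that $A_I$ is Borel, so $E^X_G$ is idealistic. I do not expect any genuine difficulty: the argument is bookkeeping around the well-definedness and nontriviality of the ideal together with an invocation of the category-quantifier theorem, and the only point needing care is confirming that the latter applies in exactly the form used.
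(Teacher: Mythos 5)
Your proof is correct and follows exactly the approach the paper indicates: it uses the very $\sigma$-ideal the paper specifies before the lemma (meagerness of $\{g\in G: g\cdot x\in S\}$), verifies well-definedness via right-translation invariance of category, nontriviality via the Baire category theorem, and then discharges the definability requirement by pulling $A$ back along the Borel map $(x,g)\mapsto(x,g\cdot x)$ and invoking the Borelness of the category quantifier. This is the standard argument (cf.\ Kechris, \emph{Countable sections for locally compact group actions}), and the parenthetical remark that Theorem \ref{continuousaction} is not actually needed here is also correct.
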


In our constructions for Section \ref{freesection}, we will begin with the following ``countable sections'' constructed by Kechris.
\begin{theorem}\label{lacunary}\emph{(Kechris, \cite{lacunary})}
Suppose $G$ is a locally compact Polish group acting continuously on a Polish space $X$. Then there is a Borel set $Y\subseteq X$ which contains at least one and at most countably many points of each $G$-orbit (i.e., each $E_G^X$-equivalence class). Moreover, given any compact symmetric neighborhood $K$ of the identity in $G$, we may construct $Y$ so that $\forall y\in Y$, $(K\cdot y)\cap Y=\{y\}$. \emph{(Here, we will say that $Y$ is \emph{$K$-discrete})}.
\end{theorem}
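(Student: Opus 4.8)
The plan is to split the theorem into two independent parts: producing \emph{some} Borel $Y\subseteq X$ that is $K$-discrete and meets every $G$-orbit, and then observing that any such $Y$ automatically meets each orbit in only countably many points. So the ``moreover'' clause is the substantive one, and the first assertion follows by running the construction with any compact symmetric neighborhood of $1$ in place of $K$. For the preliminary reductions: $G$ is Polish, hence second countable, hence (being locally compact) $\sigma$-compact, so fix an exhaustion $G=\bigcup_n K_n$ by compact symmetric neighborhoods of $1$ with $K_n\subseteq\operatorname{int}K_{n+1}$, together with a compatible complete metric and a Borel linear order $<_b$ on $X$. Since the action is continuous, $\{(g,x,y):g\cdot x=y\}$ is closed in $G\times X\times X$, so $E^X_G=\bigcup_n\{(x,y):\exists g\in K_n\ g\cdot x=y\}$ is $F_\sigma$ --- each term is the image of a closed subset of $K_n\times X\times X$ under the projection $(g,x,y)\mapsto(x,y)$, which is closed because $K_n$ is compact --- and likewise, for any Borel $A\subseteq X$, the set $K_n\cdot A$, and hence the saturation $G\cdot A=\bigcup_n K_n\cdot A$, is Borel; I use these facts freely.

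For the counting part, suppose $Y$ is $K$-discrete and fix $x\in X$. Fix a compact symmetric neighborhood $V$ of $1$ with $VV\subseteq K$ and a right Haar measure $\nu$ on $G$, so that $\nu(Vg)=\nu(V)>0$ for every $g$. If $y\neq y'$ both lie in $Y\cap(K_n\cdot x)$, write $y=g\cdot x$ and $y'=h\cdot x$ with $g,h\in K_n$; were $Vg\cap Vh\neq\emptyset$, we would get $h\in V^{-1}Vg=VVg\subseteq Kg$, so $y'=h\cdot x\in(Kg)\cdot x=K\cdot y$, contradicting $K$-discreteness. Hence the sets $Vg$, as $y$ ranges over $Y\cap(K_n\cdot x)$, are pairwise disjoint subsets of the compact set $VK_n$; since $\nu(VK_n)<\infty$, the set $Y\cap(K_n\cdot x)$ is finite, and so $Y\cap(G\cdot x)=\bigcup_n\bigl(Y\cap(K_n\cdot x)\bigr)$ is countable. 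It therefore suffices to produce a Borel, $K$-discrete $Y$ meeting every orbit.

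To do so I would fix a countable basis $\{U_m\}_{m\geq1}$ of nonempty open subsets of $X$ and construct an increasing sequence $\emptyset=Y_0\subseteq Y_1\subseteq\cdots$ of Borel $K$-discrete sets, taking $Y=\bigcup_m Y_m$. Since an increasing union of $K$-discrete sets is $K$-discrete, $Y$ will be $K$-discrete, and $Y$ will meet every orbit provided that at stage $m$ every orbit that meets $U_m$ but is not yet met by $Y_{m-1}$ acquires a new point (the $U_m$ cover $X$). At stage $m$ one forms the Borel set $B_m=U_m\setminus(K\cdot Y_{m-1})$ of as-yet-uncovered points of $U_m$ and adjoins to $Y_{m-1}$ a subset $Z_m\subseteq B_m$ that is internally $K$-separated and maximal as such in $B_m$ (so that $B_m\subseteq K\cdot Z_m$), chosen by a Borel recipe uniform in $Y_{m-1}$. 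Then $Y_m=Y_{m-1}\cup Z_m$ is again Borel and $K$-discrete --- points of $Z_m$ avoid $K\cdot Y_{m-1}$ by construction, $Y_{m-1}$ avoids $K\cdot Z_m$ by symmetry of $K$, and $Z_m$ is internally $K$-separated --- and $Y=\bigcup_m Y_m$ meets every orbit, since if $x\in U_m$ and $x\notin K\cdot Y$ then $x\in B_m\subseteq K\cdot Z_m\subseteq K\cdot Y$, a contradiction.

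I expect the main obstacle to be exactly the selection step just invoked: making a Borel choice of a maximal $K$-separated subset of a Borel set, uniformly in a parameter. For an arbitrary Borel graph this is impossible --- a maximal independent set would give a Borel transversal, which need not exist (e.g. for $E_0$) --- and what saves the present situation is that the relation $R_K=\{(x,y):y\in K\cdot x\}$ is closed with compact vertical sections $K\cdot x$. The naive greedy choice along a dense sequence, using the selectors of Theorem \ref{selection}, fails to be maximal, because a set $K\cdot x$ can be nowhere dense in $X$ (an arc of a densely winding line, for instance), so $K$-separated points can accumulate while leaving points uncovered. Overcoming this is the technical heart of Kechris's proof, which I would follow: using the group structure and $\sigma$-compactness, one runs a genuinely countable Vitali-type exhaustion, peeling points off one $K_n$-ball at a time; at each sub-step the selectors of Theorem \ref{selection}, now applied to the Effros space, together with closedness of $R_K$ --- which allows one to pick $<_b$-least elements inside the compact sets $K\cdot y$ in a Borel fashion --- produce the new points, while the packing estimate from the counting part guarantees that only finitely many points are ever chosen inside a given $K_n\cdot x$, so that every orbit is handled completely in the limit. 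Assembling these stages yields the Borel $K$-discrete complete section $Y$, and together with the counting part this proves the theorem.
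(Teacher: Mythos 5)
The paper does not prove this theorem --- it is imported wholesale from Kechris, \emph{Countable sections for locally compact group actions} (the reference \cite{lacunary}), and is used as a black box --- so there is no internal proof to measure your attempt against; I will assess the attempt on its own terms.

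Your ``counting'' half is correct and self-contained: the Haar-measure packing argument (disjoint right-translates $Vg_y$ sitting inside the finite-measure compact $VK_n$) cleanly gives that a $K$-discrete set meets $K_n\cdot x$ finitely and hence meets each orbit countably. It is worth noting this is a genuinely different mechanism from the one the paper itself uses for the analogous finiteness issue (Lemma~\ref{finite}), which is a bare-hands compactness/clustering argument requiring no measure theory; your version is shorter but brings in Haar measure, the paper's version is more elementary. Either works.

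The ``construction'' half, however, contains a gap which you yourself flag and then do not close. Producing, uniformly and Borel-measurably, a \emph{maximal} $K$-separated subset $Z_m\subseteq B_m$ is exactly the step a tool like Lemma~\ref{maxdiscrete} would handle --- but that lemma requires the relation to be \emph{locally finite}, and $R_K(x)=K\cdot x$ has uncountable sections, so it does not apply. You correctly diagnose that a naive greedy selection along a dense sequence fails because $K\cdot x$ can be nowhere dense, and you correctly diagnose that the group structure and $\sigma$-compactness must be exploited; but the closing paragraph (``peeling points off one $K_n$-ball at a time,'' ``pick $<_b$-least elements inside the compact sets $K\cdot y$'') is an impressionistic gesture at Kechris's argument rather than a proof. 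A Borel linear order on a Polish space need not attain minima on compacta in a Borel-uniform way, and ``$K_n$-ball'' conflates a compact subset of $G$ with a subset of $X$; as written, this step simply isn't there. There is also a smaller unacknowledged issue: your claim that $K_n\cdot A$ is Borel for Borel $A$ is not automatic (it is a priori only analytic, being a projection of a Borel set); it is true here, but only after you observe that the fibers $A\cap(K_n\cdot y)$ are finite by $K$-discreteness and invoke Luzin--Novikov, which you never do. So the outline is sensible and the easy half is fine, but the hard half --- the Borel selection scheme that is the actual content of Kechris's theorem --- remains unproved.
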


Applying the Luzin--Novikov uniformization theorem to $P\subset X\times Y$ defined by $(x,y)\in P\Leftrightarrow x E^X_G y$, we get a Borel function which reduces $E^X_G$ to $E^X_{G}\upharpoonright Y$ and so we have the following corollary which, in light of Corollary \ref{hypersmoothtohyperfinite}, tells us that whenever an orbit equivalence relation which is induced by an action of a locally compact Polish group is hypersmooth then it is essentially hyperfinite.
\begin{corollary}\label{essentiallycountable}\emph{(Kechris, \cite{lacunary})}
If $G$ is a locally compact Polish group acting in a Borel manner on a standard Borel space $X$, then the induced orbit equivalence relation $E_G^X$ is essentially countable.
\end{corollary}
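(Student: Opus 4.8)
The substantive content is supplied by Theorem~\ref{lacunary}; the remaining task is simply to repackage a lacunary section as a Borel reduction to a countable Borel equivalence relation. The plan is as follows. First I would replace the given action by a topologically well-behaved one: by the Becker--Kechris theorem (Theorem~\ref{continuousaction}) there is a Polish topology on $X$, generating the same Borel structure, with respect to which the $G$-action is continuous. Applying Theorem~\ref{lacunary} to this continuous action yields a Borel set $Y\subseteq X$ meeting every $G$-orbit in a nonempty, at most countable set.

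The next step is to observe that $E^X_G$ is actually Borel, so that $E^X_G\upharpoonright Y$ is a genuine \emph{countable Borel} equivalence relation (essential countability is only meaningful with such a target). This is where local compactness, rather than mere Polishness, is used: a locally compact Polish group is second countable, hence $\sigma$-compact, so write $G=\bigcup_n K_n$ with each $K_n$ compact. Since the action is continuous, the set $C=\{(x,y,g): g\cdot x=y\}$ is closed in $X\times X\times G$, being the preimage of the diagonal of $X$ under the continuous map $(x,y,g)\mapsto(g\cdot x,y)$. Hence for each $n$ the set $\{(x,y):\exists g\in K_n\ g\cdot x=y\}$, which is the image of the closed set $C\cap(X\times X\times K_n)$ under the projection $X\times X\times K_n\to X\times X$ along the compact fibre $K_n$, is closed. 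Since $G=\bigcup_n K_n$, it follows that $E^X_G$ is a countable union of closed sets, in particular Borel, and therefore $E^X_G\upharpoonright Y=E^X_G\cap(Y\times Y)$ is Borel with all classes countable by the choice of $Y$.

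Finally I would build the reduction. Let $P=E^X_G\cap(X\times Y)$, a Borel subset of $X\times Y$ whose vertical sections $P_x=\{y\in Y: x\,E^X_G\,y\}$ are all nonempty and at most countable, by the two defining properties of $Y$. By the Luzin--Novikov uniformization theorem for Borel sets with countable sections, $P$ admits a Borel uniformizing function $f:X\to Y$, so that $f(x)\in Y$ and $f(x)\,E^X_G\,x$ for every $x$. Then $f$ is a Borel reduction of $E^X_G$ to $E^X_G\upharpoonright Y$: if $x\,E^X_G\,x'$ then $f(x)\,E^X_G\,x\,E^X_G\,x'\,E^X_G\,f(x')$, and conversely $f(x)\,E^X_G\,f(x')$ forces $x\,E^X_G\,f(x)\,E^X_G\,f(x')\,E^X_G\,x'$; since $f(x),f(x')\in Y$, this relation is exactly $f(x)\,(E^X_G\upharpoonright Y)\,f(x')$. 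Hence $E^X_G\leq_B E^X_G\upharpoonright Y$, a countable Borel equivalence relation, so $E^X_G$ is essentially countable.

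The one point deserving care is the Borelness of $E^X_G$ (equivalently of $E^X_G\upharpoonright Y$): without it the target is not a countable Borel equivalence relation and ``essentially countable'' would be vacuous. That is precisely the step that exploits local compactness, through $\sigma$-compactness of $G$ and the fact that projection of a closed set along a compact fibre is closed. Everything else is a direct assembly of Theorems~\ref{continuousaction} and~\ref{lacunary} together with the Luzin--Novikov theorem.
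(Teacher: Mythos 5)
Your proof is correct and follows essentially the same route as the paper: apply Theorem~\ref{lacunary} (after Becker--Kechris to make the action continuous) to get a lacunary Borel section $Y$, then uniformize $P=E^X_G\cap(X\times Y)$ via Luzin--Novikov to obtain a Borel reduction of $E^X_G$ to $E^X_G\upharpoonright Y$. The paper's one-sentence justification takes for granted that $E^X_G$ (hence $P$) is Borel with countable sections; your explicit verification of Borelness via $\sigma$-compactness of $G$ and the fact that projection along a compact fibre preserves closedness is a worthwhile point of care, but it is a detail within the same argument rather than a different approach.
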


\section{Actions of LCA groups and reduction to $\mathbb{R}^{<\omega}$}\label{reductionsection}

We denote the groups $\bigoplus_{\omega}\mathbb{R}$, $\bigoplus_{\omega}\mathbb{T}$, and $\bigoplus_{\omega}\mathbb{Z}$ as $\mathbb{R}^{<\omega}$, $\mathbb{T}^{<\omega}$, and $\mathbb{Z}^{<\omega}$, respectively, where $\mathbb{T}\cong\mathbb{R}/\mathbb{Z}$ is viewed as $[0,1)$ with addition modulo $1$ as the operation.

\subsection{Reducing the LCA-actions to $\mathbb{R}^{n}\times\mathbb{Z}^{<\omega}$-actions} 
We make use of the Principal Structure Theorem for locally compact abelian groups. Following the convention of \cite{morris}, we say \emph{LCA group} to mean locally compact Hausdorff abelian topological group. The following version\footnote{As in \cite{morris}, the Principal Structure Theorem for LCA groups is often given as the seemingly weaker: ``Every LCA group has an open subgroup topologically isomorphic to $\mathbb{R}^{n}\times K$, for some compact group $K$ and non-negative integer $n$.'' However, this can be shown to be equivalent to the version used here.} of the structure theorem for LCA groups is carefully proven in \cite{deitmar_echterhoff}.

\begin{theorem}\label{structurethm}\emph{(Principal Structure Theorem)}  Every LCA group is topologically isomorphic to $\mathbb{R}^{n}\times H$ for some non-negative integer $n$ and where $H$ is some Hausdorff abelian group which has a compact open (in $H$) subgroup $K$.\end{theorem}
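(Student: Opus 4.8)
The plan is to bootstrap the stated decomposition from the classical ``open subgroup'' version of the structure theorem mentioned in the footnote, namely that every LCA group $G$ has an open subgroup topologically isomorphic to $\mathbb{R}^{n}\times K$ with $K$ compact. Granting that, the genuinely new content is to \emph{globally} split off the Euclidean factor, turning the local statement into the asserted product, and that is where I would concentrate the argument.

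First I would record the open-subgroup version (or rederive it). Choosing a compact symmetric neighbourhood $V$ of $0$ in $G$, the subgroup $G_{0}=\bigcup_{m\geq 1}(V+\cdots+V)$ generated by $V$ contains the neighbourhood $V$, hence is open (and therefore also closed) in $G$, and it is compactly generated. The structure theorem for compactly generated LCA groups gives $G_{0}\cong\mathbb{R}^{n}\times\mathbb{Z}^{k}\times K$ with $K$ compact; since $\{0\}$ is open in the discrete group $\mathbb{Z}^{k}$, the subgroup $\mathbb{R}^{n}\times\{0\}\times K$ is open in $G_{0}$, hence $G$ has an open subgroup $A\cong\mathbb{R}^{n}\times K$ with $K$ compact. (This compactly generated classification is the one heavy external input; I would cite it rather than reprove it.) Being open, $A$ is closed in $G$, so the closed subgroup $N:=\mathbb{R}^{n}\times\{0\}\leq A$ is closed in $G$.

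The key step is that a closed subgroup of an LCA group which is topologically isomorphic to $\mathbb{R}^{n}$ is a topological direct summand; equivalently, $\mathbb{R}^{n}$ is injective in the category of LCA groups with continuous homomorphisms. Granting this, the identity $N\to N$ extends to a continuous retraction $r\colon G\to N$. Then $x\mapsto\bigl(r(x),\,x-r(x)\bigr)$ is a topological isomorphism of $G$ onto $\mathbb{R}^{n}\times H$, where $H=\ker r$: its inverse $(v,h)\mapsto v+h$ is continuous, and $r|_{N}=\mathrm{id}$ forces $x-r(x)\in\ker r$ and $r(v+h)=v$. It remains to see that $H$ has a compact open subgroup. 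Internally $G=N\oplus H$; since $N\subseteq A$ and $A$ is a subgroup, $A=N\oplus(A\cap H)$. The projection $G\to H$ is continuous and open, so it carries the open set $A$ onto the open subgroup $A\cap H\leq H$, and $A\cap H\cong A/N\cong K$ is compact. Thus $A\cap H$ is a compact open subgroup of $H$, and $G\cong\mathbb{R}^{n}\times H$ is the desired decomposition.

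The main obstacle is the injectivity/splitting statement for $\mathbb{R}^{n}$. The cleanest route I know passes through Pontryagin duality: $N\cong\mathbb{R}^{n}$ is a closed summand of $G$ iff the induced surjection $\widehat{G}\twoheadrightarrow\widehat{N}\cong\mathbb{R}^{n}$ splits, i.e.\ iff $\mathbb{R}^{n}$ is \emph{projective} in the category of LCA groups, which in turn follows from the fact that a continuous homomorphism out of the connected, simply connected Lie group $\mathbb{R}^{n}$ lifts through any quotient map of LCA groups (lift near the identity using the exponential map, then globalize via simple connectedness). Pinning these duality and lifting facts down rigorously is the real work here; as the paper already does, I would ultimately defer to the careful treatment in \cite{deitmar_echterhoff} rather than reproduce it.
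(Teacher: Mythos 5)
The paper does not actually prove Theorem~\ref{structurethm}; it is cited to \cite{deitmar_echterhoff}, with the footnote merely asserting, without argument, that the global product form is equivalent to the more common ``open subgroup'' formulation. Your proposal supplies exactly that missing derivation and follows the standard textbook route: obtain the open subgroup $A\cong\mathbb{R}^{n}\times K$ from the compactly generated structure theorem, split the closed copy $N\cong\mathbb{R}^{n}$ off all of $G$ via a continuous retraction $r$, and identify $A\cap\ker r\cong A/N\cong K$ as a compact open subgroup of $H=\ker r$. The algebra in the last two steps is correct as written, including the verification that the internal decomposition $G=N\oplus H$ restricts to $A=N\oplus(A\cap H)$ and that the projection carries the open set $A$ onto $A\cap H$. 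The one spot I would not let stand is the heuristic for projectivity of $\mathbb{R}^{n}$: an open surjection of LCA groups is not a covering map, so ``lift near the identity, then globalize by simple connectedness'' is a gesture rather than a proof, since local continuous sections are not automatic. A clean rigorous version is available from the very open-subgroup decomposition you already assume: given an open surjection $q\colon L\twoheadrightarrow\mathbb{R}^{n}$, pick an open subgroup $U\cong\mathbb{R}^{m}\times C$ of $L$ with $C$ compact; then $q(U)$ is an open subgroup of $\mathbb{R}^{n}$ and so equals $\mathbb{R}^{n}$, while $q(C)$ is a compact subgroup of $\mathbb{R}^{n}$ and so is trivial, whence $q$ restricts to a surjective linear map $\mathbb{R}^{m}\to\mathbb{R}^{n}$, which splits linearly. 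With that splitting lemma in hand (or simply cited, as it is a classical result), your derivation is complete and is precisely the equivalence the paper's footnote has in mind.
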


Now, we show that in our context of Borel reduction of orbit equivalence relations, the compact open subgroup may be `modded out' and ignored.

\begin{lemma}\label{modout} Suppose $E^X_G$ is induced by a Borel action of a Polish group $G$ on a standard Borel space $X$, and suppose $G$ has a normal subgroup $N$ such that the subequivalence relation $E^X_N$ (induced by the restriction of the action to $N\times X$) is smooth. Then $E^X_G \leq_B E^Y_{G/N}$ for some Borel $Y\subseteq X$.
\end{lemma}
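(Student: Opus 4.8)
The plan is to replace $X$ by a Borel transversal for the smooth relation $E^X_N$ and to let $G/N$ act on it, with normality of $N$ making the action well defined. Throughout I use that $N$ is closed in $G$ — automatic in the intended application, where $N$ is compact — so that $N$ and $G/N$ are Polish groups.

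First I would produce a Borel selector for $E^X_N$. By Theorem~\ref{continuousaction} we may give $X$ a Polish topology making the $G$-action, and hence the restricted $N$-action, continuous; then Lemma~\ref{borelselector} shows $E^X_N$ is idealistic, and since it is assumed smooth, Theorem~\ref{idealistic} yields a Borel selector $s\colon X\to X$, i.e. $s(x)\mathrel{E^X_N}x$ and $x\mathrel{E^X_N}y\Rightarrow s(x)=s(y)$. These axioms give $s\circ s=s$, so $Y:=\{x\in X:s(x)=x\}=s(X)$ is a Borel subset of $X$ (the equalizer of the Borel map $s$ and the identity) onto which $s$ is a Borel retraction; note also $E^X_N\subseteq E^X_G$ since $N\le G$.

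Next I would define an action of $G/N$ on $Y$ by $(gN)\cdot y:=s(g\cdot y)$. Well-definedness is where normality is used: if $g'=gn$ with $n\in N$ then $g'\cdot y=(gng^{-1})\cdot(g\cdot y)$ and $gng^{-1}\in N$, so $g'\cdot y\mathrel{E^X_N}g\cdot y$ and thus $s(g'\cdot y)=s(g\cdot y)$. The same device shows this is an action: $(eN)\cdot y=s(y)=y$, and writing $s(g_2\cdot y)=n\cdot(g_2\cdot y)$ with $n\in N$ gives $g_1\cdot s(g_2\cdot y)=(g_1ng_1^{-1})\cdot(g_1g_2\cdot y)\mathrel{E^X_N}g_1g_2\cdot y$, so $(g_1N)\cdot((g_2N)\cdot y)=s(g_1g_2\cdot y)=(g_1g_2N)\cdot y$. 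For Borelness, note that the quotient map $q\colon G\to G/N$ is a continuous open surjection of Polish groups, so $h\mapsto q^{-1}(h)$ is a Borel map into $F(G)$ (since $\{h:q^{-1}(h)\cap U\ne\emptyset\}=q(U)$ is open); composing with a selector from Theorem~\ref{selection} gives a Borel section $\sigma\colon G/N\to G$ of $q$, and then $(h,y)\mapsto s(\sigma(h)\cdot y)=h\cdot y$ is Borel. Hence $E^Y_{G/N}$ is an orbit equivalence relation of the required form.

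Finally I would verify that $s\colon X\to Y$ is a Borel reduction of $E^X_G$ to $E^Y_{G/N}$. If $x_2=g\cdot x_1$, write $s(x_1)=n\cdot x_1$ with $n\in N$; then $g\cdot s(x_1)=(gng^{-1})\cdot x_2\mathrel{E^X_N}x_2$, so $(gN)\cdot s(x_1)=s(x_2)$, giving $s(x_1)\mathrel{E^Y_{G/N}}s(x_2)$. Conversely, if $s(x_2)=s(g\cdot s(x_1))$ for some $g$, then $s(x_2)\mathrel{E^X_G}g\cdot s(x_1)\mathrel{E^X_G}s(x_1)\mathrel{E^X_G}x_1$ and $s(x_2)\mathrel{E^X_G}x_2$, whence $x_1\mathrel{E^X_G}x_2$. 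I expect the only real obstacle to be the bookkeeping in the third paragraph: checking that $(gN)\cdot y=s(g\cdot y)$ is well defined and is an action — the two places where normality of $N$ is genuinely needed — and that it is Borel, which is the reason for extracting a Borel section of $q$ and, ultimately, for wanting $N$ (equivalently the compact open subgroup in the application) to be closed so that Lemma~\ref{borelselector} and Theorem~\ref{idealistic} apply to produce $s$ in the first place.
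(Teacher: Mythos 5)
Your proposal is correct and follows essentially the same approach as the paper: obtain a Borel selector $s$ from Theorem~\ref{idealistic} and Lemma~\ref{borelselector}, let $Y=s(X)$, define the $G/N$-action on $Y$ by $gN\cdot y=s(g\cdot y)$, and use normality of $N$ to see this is a well-defined action with $s$ serving as the reduction. Your write-up fills in some details the paper leaves implicit (in particular, Borelness of the quotient action via a Borel section of $G\to G/N$, and the explicit verification that $s$ is a reduction), but the underlying argument is the same.
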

\begin{proof} Since $E^X_N$ is smooth, it follows from Theorem \ref{idealistic} \& Lemma \ref{borelselector} that it has a Borel selector (which also serves as the reduction function), i.e., we have a Borel function $s:X\rightarrow X$ such that for each $x,y\in X$, $s(x)E^X_N x$ and $xE^X_N y \Rightarrow s(x)=s(y)$. Letting $Y$ be the image of the selector $s$, we define the new action $\hat{\cdot}$ of $G/N$ on $Y$ by $gN\,\hat{\cdot}\,x=s(g\cdot x)$. Note that since each $s(g\cdot x)\in Ng\cdot x = gN\cdot x$, we have that $s(g\cdot s(h\cdot x))\in gN\cdot s(h\cdot x)\subset gN\cdot (hN\cdot x) =ghN\cdot x$, and hence each $s(g\cdot s(h\cdot x))=s(gh\cdot x)$. Thus $gN\,\hat{\cdot}\,(hN\,\hat{\cdot}\,x)=ghN\,\hat{\cdot}\,x$, and $\hat{\cdot}$ is indeed an action.\end{proof}

In the case of  Polish LCA groups, we may now simplify the problem of bounding their complexity under Borel reduction.

\begin{theorem}\label{LCAuniversal} Suppose $E^X_G$ is induced by a Borel action of a Polish (i.e., second countable) LCA group $G$ on a standard Borel space $X$. Then $E^X_G$ is Borel reducible to an equivalence relation which is induced by a Borel action of $\mathbb{R}^{n}\times\mathbb{Z}^{<\omega}$, for some non-negative integer $n$, on a Borel subset $Y\subseteq X$.
\end{theorem}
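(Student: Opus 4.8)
The plan is to strip the compact part off of $G$ by means of Lemma \ref{modout} and then exhibit the remaining quotient as a continuous homomorphic image of $\mathbb{R}^{n}\times\mathbb{Z}^{<\omega}$.

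First I would apply the Becker--Kechris theorem (Theorem \ref{continuousaction}) to fix a Polish topology on $X$ making the $G$-action continuous, so that the restriction of the action to any subgroup of $G$ is again continuous. Then, by the Principal Structure Theorem (Theorem \ref{structurethm}), write $G$ as topologically isomorphic to $\mathbb{R}^{n}\times H$, where $H$ is a Hausdorff abelian group with a compact open subgroup $K$. Since $G$ is second countable, so are $H$ and $K$; in particular $K$ is a compact metrizable group, and since $K$ is open its cosets partition the second countable space $H$ into nonempty open sets, so there are only countably many of them, i.e.\ $A:=H/K$ is a countable discrete abelian group. Now put $N=\{0\}\times K$ inside $G\cong\mathbb{R}^{n}\times H$: as $G$ is abelian $N$ is normal, and as $K$ is compact it is closed, so $N$ is a closed normal subgroup and $G/N$ is topologically isomorphic to $\mathbb{R}^{n}\times A$.

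The crucial observation is that $E^{X}_{N}$ is smooth. Indeed $N$ is compact and acts continuously, so if $(x_{i},y_{i})\to(x,y)$ in $X\times X$ with each $y_{i}=k_{i}\cdot x_{i}$ for some $k_{i}\in N$, then by compactness a subsequence of $(k_{i})$ converges to some $k\in N$ with $y=k\cdot x$; hence $E^{X}_{N}$ is closed in $X\times X$, and closed equivalence relations are smooth by Theorem \ref{closed}. Applying Lemma \ref{modout} to $N\trianglelefteq G$ therefore produces a Borel set $Y\subseteq X$ with $E^{X}_{G}\leq_{B}E^{Y}_{G/N}$, the latter induced by a Borel action of $G/N\cong\mathbb{R}^{n}\times A$ on $Y$.

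Finally I would replace $A$ by $\mathbb{Z}^{<\omega}$. Since $A$ is a countable abelian group it is a quotient of the free abelian group on countably many generators, so there is a surjective homomorphism $\pi:\mathbb{Z}^{<\omega}\to A$, and then $\mathrm{id}_{\mathbb{R}^{n}}\times\pi:\mathbb{R}^{n}\times\mathbb{Z}^{<\omega}\to G/N$ is a continuous surjective homomorphism. Pulling the action of $G/N$ on $Y$ back along this homomorphism gives a Borel action of $\mathbb{R}^{n}\times\mathbb{Z}^{<\omega}$ on $Y$ (the composition of a continuous map with a Borel action) whose orbits are exactly the $G/N$-orbits, so $E^{Y}_{G/N}=E^{Y}_{\mathbb{R}^{n}\times\mathbb{Z}^{<\omega}}$ and composing with the previous reduction finishes the proof. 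I expect the only point needing real care to be the smoothness (equivalently, closedness) of $E^{X}_{N}$, together with the bookkeeping ensuring that the Becker--Kechris topology really puts us in the compact-group situation and that $H/K$ is genuinely countable --- which is precisely where second countability of $G$ enters.
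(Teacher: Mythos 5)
Your proposal is correct and follows essentially the same route as the paper: apply the Principal Structure Theorem to write $G\cong\mathbb{R}^n\times H$, mod out the compact open subgroup $K$ via Lemma \ref{modout}, observe $H/K$ is countable abelian by second countability, and pull the resulting action back along a surjection from $\mathbb{R}^n\times\mathbb{Z}^{<\omega}$. The only difference is that you spell out why the compact subgroup acts smoothly (closedness of $E^X_N$ via compactness and Becker--Kechris continuity, then Theorem \ref{closed}), whereas the paper simply asserts that compact groups act smoothly.
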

\begin{proof} By Theorem \ref{structurethm}, $G\cong\mathbb{R}^{n}\times H$ where $H$ has a compact open subgroup $K$. Then since $K$ is compact and therefore acts smoothly (see $\S 5.4$ of \cite{gao}), we may apply Lemma \ref{modout} to reduce $E^X_G$ to an action of $$\frac{\mathbb{R}^{n}\times H}{\{0\}^{n}\times K}\cong\mathbb{R}^{n}\times H/K$$ on a Borel $Y\subset X$ where $H/K$ must be a countable abelian group since $H$ is second countable and $K$ is open. Finally since every countable abelian group is a homomorphic image of $\mathbb{Z}^{<\omega}$, we may let $\phi  : \mathbb{R}^{n}\times\mathbb{Z}^{<\omega}\rightarrow\mathbb{R}^{n}\times H/K$ be the appropriate quotient map, and then where $a:(\mathbb{R}^{n}\times H/K)\times Y\rightarrow Y$ is the Borel action given by Lemma \ref{modout}, we let $\mathbb{R}^{n}\times\mathbb{Z}^{<\omega}$ act on $Y$ by $\hat{a}(g,x)= a(\phi(g),x)$. Then the identity function on $Y$ is a Borel reduction from $E^Y_{\mathbb{R}^{n}\times H/K}$ to $E^{Y}_{\mathbb{R}^{n}\times\mathbb{Z}^{<\omega}}$.\end{proof}

\subsection{Reduction to $\mathbb{R}^{<\omega}$-actions}

\begin{theorem}\label{reductiontoR}Suppose $E$ is the orbit equivalence relation on a standard Borel space $X$ which is induced by a Borel action, $a:G\times X\rightarrow X$, of $$G=(\bigoplus_{\alpha}\mathbb{T})\oplus(\bigoplus_{\beta}\mathbb{R})\oplus A$$ where $\alpha,\beta$ are each finite or $\omega$ and $A$ is a countable abelian group. Then $E$ is Borel reducible to an orbit equivalence relation induced by a Borel action of $\mathbb{R}^{<\omega}$ on $X\times\mathbb{T}^{<\omega}$.
\end{theorem}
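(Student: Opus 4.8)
The plan is a ``mapping torus'' construction that uses each auxiliary circle coordinate, together with an extra copy of $\mathbb{R}$, to absorb one copy of $\mathbb{Z}$, and then uses $\mathbb{Z}^{<\omega}$ to absorb the countable factor $A$, while a quotient map disposes of the torus factors of $G$. I would first reduce to a group of the form $\mathbb{R}^{<\omega}\oplus\mathbb{Z}^{<\omega}$ acting on $X$: since $\mathbb{R}$ surjects continuously onto $\mathbb{T}=\mathbb{R}/\mathbb{Z}$, the group $\bigoplus_{\alpha}\mathbb{T}$ is a homomorphic image of $\bigoplus_{\alpha}\mathbb{R}$; since $\alpha$ and $\beta$ are each finite or $\omega$, the group $(\bigoplus_\alpha\mathbb{R})\oplus(\bigoplus_\beta\mathbb{R})$ is isomorphic to a direct summand of $\mathbb{R}^{<\omega}$; and since every countable abelian group is a homomorphic image of $\mathbb{Z}^{<\omega}$, the factor $A$ is too. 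Combining these gives a surjective homomorphism $q:\mathbb{R}^{<\omega}\oplus\mathbb{Z}^{<\omega}\to G$ which is Borel, being continuous on the $\mathbb{R}^{<\omega}$ summand and automatically Borel on the countable group $\mathbb{Z}^{<\omega}$. Pulling $a$ back along $q$ yields a Borel action $b$ of $\mathbb{R}^{<\omega}\oplus\mathbb{Z}^{<\omega}$ on $X$, which I write $(\vec{r},\vec{n})\cdot x$, with exactly the same orbits as $a$; hence $E=E^X_{\mathbb{R}^{<\omega}\oplus\mathbb{Z}^{<\omega}}$.

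Next I would simulate the $\mathbb{Z}^{<\omega}$ summand by an $\mathbb{R}^{<\omega}$-action on $X\times\mathbb{T}^{<\omega}$. Viewing $\mathbb{T}$ as $[0,1)$ with addition mod $1$ and writing $r=\lfloor r\rfloor+\{r\}$ for $r\in\mathbb{R}$, define $c:(\mathbb{R}^{<\omega}\oplus\mathbb{R}^{<\omega})\times(X\times\mathbb{T}^{<\omega})\to X\times\mathbb{T}^{<\omega}$ by
$$ c\bigl((\vec{r},\vec{s}),(x,\vec{t})\bigr)=\Bigl((\vec{r},(\lfloor s_i+t_i\rfloor)_i)\cdot x,\ (\{s_i+t_i\})_i\Bigr). $$
Every sequence occurring here is finitely supported, so $c$ lands in $X\times\mathbb{T}^{<\omega}$, and $c$ is Borel because $\lfloor\,\cdot\,\rfloor$ and $\{\,\cdot\,\}$ are Borel and $b$ is Borel. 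I expect the only point requiring genuine care is to verify that $c$ is actually an \emph{action} of $\mathbb{R}^{<\omega}\oplus\mathbb{R}^{<\omega}$: this follows, coordinate by coordinate, from the arithmetic identities $\lfloor p+\{q\}\rfloor+\lfloor q\rfloor=\lfloor p+q\rfloor$ and $\{p+\{q\}\}=\{p+q\}$, together with the fact that $b$ is an action (so that in particular its $\mathbb{R}^{<\omega}$ and $\mathbb{Z}^{<\omega}$ parts commute), and it also requires checking that finite support is preserved under composition.

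Finally I would read off the orbits of $c$. For any $\vec{t},\vec{u}\in\mathbb{T}^{<\omega}$ and any $\vec{n}\in\mathbb{Z}^{<\omega}$, the finitely supported sequence $\vec{s}$ with $s_i=n_i+u_i-t_i$ satisfies $\lfloor s_i+t_i\rfloor=n_i$ and $\{s_i+t_i\}=u_i$; hence $(x,\vec{t})$ and $(y,\vec{u})$ lie in the same $c$-orbit exactly when $y=(\vec{r},\vec{n})\cdot x$ for some $\vec{r}\in\mathbb{R}^{<\omega}$ and $\vec{n}\in\mathbb{Z}^{<\omega}$, i.e. exactly when $x\,E\,y$, and this does not depend on $\vec{t}$ or $\vec{u}$. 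So, after fixing a Borel (indeed topological) group isomorphism $\mathbb{R}^{<\omega}\oplus\mathbb{R}^{<\omega}\cong\mathbb{R}^{<\omega}$ obtained by interleaving coordinates, the map $x\mapsto(x,\vec{0})$ is a Borel reduction of $E$ to the orbit equivalence relation $E^{X\times\mathbb{T}^{<\omega}}_{\mathbb{R}^{<\omega}}$ of the Borel action $c$, which is exactly the assertion of the theorem.
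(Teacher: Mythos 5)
Your proposal is correct and follows essentially the same strategy as the paper: pull the action back along a surjective Borel homomorphism to get an $\mathbb{R}^{<\omega}\oplus\mathbb{Z}^{<\omega}$ action with the same orbits, then simulate the $\mathbb{Z}^{<\omega}$ factor via the floor/fractional-part action of $\mathbb{R}^{<\omega}\oplus\mathbb{R}^{<\omega}$ on $X\times\mathbb{T}^{<\omega}$ and reduce by $x\mapsto(x,\vec{0})$. The only cosmetic difference is that the paper routes through an intermediate cover $\mathbb{T}^{<\omega}\oplus\mathbb{R}^{<\omega}\oplus\mathbb{Z}^{<\omega}$ before composing with $\pi_{\mathbb{Z}}$, whereas you absorb that step into a single surjection $q$; your arithmetic identities and the orbit computation match the paper's verification.
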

\begin{proof}
Since every countable abelian group is a homomorphic image of $\mathbb{Z}^{<\omega}$, it is clear that $(\bigoplus_{\alpha}\mathbb{T})\oplus(\bigoplus_{\beta}\mathbb{R})\oplus A$ is a quotient of $\mathbb{T}^{<\omega}\oplus\mathbb{R}^{<\omega}\oplus\mathbb{Z}^{<\omega}$. Letting $\pi:\mathbb{T}^{<\omega}\oplus\mathbb{R}^{<\omega}\oplus\mathbb{Z}^{<\omega}\rightarrow(\bigoplus_{\alpha}\mathbb{T})\oplus(\bigoplus_{\beta}\mathbb{R})\oplus A$ be the quotient map which projects $\mathbb{T}^{<\omega}$ and $\mathbb{R}^{<\omega}$ down to the appropriate number of coordinates and maps $\mathbb{Z}^{<\omega}$ onto $A$, it follows that $\pi$ is Borel and that the action $b$ of $\mathbb{T}^{<\omega}\oplus\mathbb{R}^{<\omega}\oplus\mathbb{Z}^{<\omega}$ on $X$ defined by $b((\vec{u},\vec{v},\vec{w}),x)=a(\pi(\vec{u},\vec{v},\vec{w}),x)$ is Borel and produces the same orbits as the action $a$.

Now, fix an isomorphism/reordering of coordinates $\phi:\mathbb{R}^{<\omega}\oplus\mathbb{Z}^{<\omega}\rightarrow \mathbb{R}^{<\omega}\oplus\mathbb{R}^{<\omega}\oplus\mathbb{Z}^{<\omega}$ and let $\pi_{\mathbb{Z}}:\mathbb{R}\rightarrow\mathbb{T}$ be the usual quotient map from $\mathbb{R}$ onto $\mathbb{T}=\mathbb{R}/\mathbb{Z}$. Then, we achieve the same orbits as $b$ with the action $b':(\mathbb{R}^{<\omega}\oplus\mathbb{R}^{<\omega}\oplus\mathbb{Z}^{<\omega})\times X\rightarrow X$ where $b'((\vec{u},\vec{v},\vec{w}),x)=b(((\pi_{\mathbb{Z}}(u_n))_{n\in\omega},\vec{v},\vec{w}), x)$. Then, these orbits are given by an action $(g,x)\mapsto g\cdot x$ of $\mathbb{R}^{<\omega}\oplus\mathbb{Z}^{<\omega}$ by letting $g\cdot x =b'(\phi(g),x)$ for each $g\in \mathbb{R}^{<\omega}\oplus\mathbb{Z}^{<\omega}$.

Finally, we consider the action $\hat{\cdot}$ of $\mathbb{R}^{<\omega}\oplus\mathbb{R}^{<\omega}$ on $X\times\mathbb{T}^{<\omega}$ which is defined by $$\langle\vec{u},\vec{v}\rangle\,\hat{\cdot}\,\langle x, \vec{t}\rangle=\langle\langle\vec{u},(\lfloor v_{n}+t_{n}\rfloor)_{n\in\omega}\rangle\cdot x, (v_{n}+t_{n}-\lfloor v_{n}+t_{n}\rfloor)_{n\in\omega}\rangle.$$  Note that each $t_n\in[0,1)$, so it is clear that $\langle\vec{0},\vec{0}\rangle\,\hat{\cdot}\,\langle x,\vec{t}\rangle=\langle x,\vec{t}\rangle$. Also, whenever $k$ is an integer $\lfloor r-k\rfloor=\lfloor r\rfloor -k$. So,
\begin{align*}
\langle\vec{u},\vec{v}\rangle&\,\hat{\cdot}\,(\langle\vec{w},\vec{z}\rangle\,\hat{\cdot}\,\langle x, \vec{t}\rangle)\\
&=\langle\vec{u},\vec{v}\rangle\,\hat{\cdot}\,\langle\langle\vec{w},(\lfloor z_{n}+t_{n}\rfloor)_{n\in\omega}\rangle\cdot x, (z_{n}+t_{n}-\lfloor z_{n}+t_{n}\rfloor)_{n\in\omega}\rangle\\
&=\langle\langle\vec{u}, (\lfloor v_{n}+z_{n}+t_{n}-\lfloor z_{n}+t_{n}\rfloor\rfloor)_{n\in\omega}\rangle\cdot(\langle\vec{w},(\lfloor z_{n}+t_{n}\rfloor)_{n\in\omega}\rangle\cdot x),\\
&\quad\quad\quad\quad(v_n+z_n+t_n-\lfloor z_{n}+t_{n}\rfloor-\lfloor v_n+z_n+t_n-\lfloor z_{n}+t_{n}\rfloor\rfloor)_{n\in\omega}\rangle\\
&=\langle\langle \vec{u}+\vec{w}, (\lfloor v_n+z_n+t_n\rfloor)_{n\in\omega}\rangle\cdot x,\\
&\quad\quad\quad\quad(v_n+z_n+t_n-\lfloor v_n+z_n+t_n\rfloor)_{n\in\omega}\rangle\\
&=\langle\vec{u}+\vec{w},\vec{v}+\vec{z}\rangle\,\hat{\cdot}\,\langle x, \vec{t}\rangle,
\end{align*}
and $\hat{\cdot}$ is indeed an action. Note we may use a simple isomorphism/reordering of coordinates to view the action of $\mathbb{R}^{<\omega}\oplus\mathbb{R}^{<\omega}$ on $X\times\mathbb{T}^{<\omega}$ as an action of just $\mathbb{R}^{<\omega}$ on $X\times\mathbb{T}^{<\omega}$, and we can see that $E=E^{X}_{\mathbb{R}^{<\omega}\oplus\mathbb{Z}^{<\omega}}\leq_{B} E^{X\times\mathbb{T}^{<\omega}}_{\mathbb{R}^{<\omega}}$ by the reduction $f(x)=\langle x,\vec{0}\rangle$ since $\langle\vec{u},\vec{v}\rangle\,\hat{\cdot}\,\langle x, \vec{0}\rangle=\langle\langle\vec{u},(\lfloor v_{n}\rfloor)_{n\in\omega}\rangle\cdot x, (v_{n}-\lfloor v_{n}\rfloor)_{n\in\omega}\rangle=\langle y,\vec{0}\rangle\iff \langle\vec{u},\vec{v}\rangle\in\mathbb{R}^{<\omega}\oplus\mathbb{Z}^{<\omega}$ and $\langle\vec{u},\vec{v}\rangle\cdot x=y$.\end{proof}


\section{Non-free actions of $\mathbb{R}^{<\omega}$}\label{reductiontofreesection}

In this section, we will consider the stabilizers under an $\mathbb{R}^{<\omega}$ action and produce Borel functions which provide bases for the stabilizers as well as for an algebraic complement of their $\mathbb{R}$-spans. We may then split the space into countably many disjoint invariant pieces according to the isomorphism types of the quotients of $\mathbb{R}^{<\omega}$ by these stabilizers. Finally, we will use the basis functions to produce a free action of that quotient on each of these countably many  pieces which produces the same orbits as the original action.

\subsection{Quotients and Closed Subgroups of $\mathbb{R}^{<\omega}$}\label{subgrpsofRinfty} 

We will abuse notation somewhat and say $\mathbb{R}^n$ whenever we mean the subgroup $(\prod_{i=1}^{n}\mathbb{R})\times(\prod_{i=n+1}^{\infty}\{0\})$ of $\mathbb{R}^{<\omega}$, and when we refer to a closed subgroup of any $\mathbb{R}^n$ we mean with respect to the usual topology. However, when we refer to a closed subgroup of $\mathbb{R}^{<\omega}$ we mean with respect to the topology where a set $A\subset\mathbb{R}^{<\omega}$ is closed iff each $A\cap\mathbb{R}^n$ is closed in $\mathbb{R}^n$. Equivalently, the topology on $\mathbb{R}^{<\omega}$ is the subspace topology inherited from the ``box'' topology on the product $\mathbb{R}^{\omega}$ where all products of open sets form a basis for the topology. This is also the topology on $\mathbb{R}^{<\omega}$ when viewed as the direct limit of the $\mathbb{R}^n$'s. The structure of the closed subgroups and Hausdorff quotients of $\mathbb{R}^{<\omega}$ (and in fact any of the entire class of groups considered in this paper) under this topology are explored thoroughly in \cite{brown_higgins_morris}. But we will highlight a few important points:

\begin{proposition}\label{bhm}\emph{(Brown--Higgins--Morris, \cite{brown_higgins_morris})}
\begin{itemize}
\item[(i)] Every finite-dimensional subspace of $\mathbb{R}^{<\omega}$ has the standard topology.
\item[(ii)] A subset of $\mathbb{R}^{<\omega}$ is open (closed) iff it meets each finite-dimensional subspace $F$ in an open (closed) subset of $F$.
\item[(iii)] $\mathbb{R}^{<\omega}$ is a topological vector space over $\mathbb{R}$. (i.e., the scalar multiplication is also continuous)
\item[(iv)] Any Hausdorff topological vector space of algebraic dimension $\aleph_{0}$ over $\mathbb{R}$ and having property \emph{(ii)} is isomorphic, as a topological vector space, to $\mathbb{R}^{<\omega}$.
\item[(v)] Every closed subgroup of $\mathbb{R}^{<\omega}$ is topologically isomorphic to some $(\bigoplus_{\alpha}\mathbb{R})\oplus(\bigoplus_{\beta}\mathbb{Z})$ where $\alpha,\beta$ are each finite or $\omega$, and every Hausdorff quotient of $\mathbb{R}^{<\omega}$ is topologically isomorphic to some $(\bigoplus_{\beta}\mathbb{T})\oplus(\bigoplus_{\gamma}\mathbb{R})$ where $\beta,\gamma$ are each finite or $\omega$.
\end{itemize}\end{proposition}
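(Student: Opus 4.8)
The plan is to take as a working definition the description of the topology of $\mathbb{R}^{<\omega}$ as the direct limit of the chain $\mathbb{R}^1\subseteq\mathbb{R}^2\subseteq\cdots$ under the standard inclusions (equivalently the box--subspace topology, as remarked above). The one general fact I would establish first is that, since each inclusion $\mathbb{R}^n\hookrightarrow\mathbb{R}^{n+1}$ is a closed topological embedding, the standard fact about sequential direct limits of closed embeddings (one extends open sets a stage at a time) shows that each $\mathbb{R}^n$ sits in $\mathbb{R}^{<\omega}$ as a closed subset carrying precisely its standard topology. Item (i) is then immediate: a finite-dimensional subspace $F$ is spanned by finitely many vectors of finite support, hence lies in some $\mathbb{R}^n$ and inherits the standard topology from it, and thus from $\mathbb{R}^{<\omega}$. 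Item (ii) follows too: by definition a set is closed in $\mathbb{R}^{<\omega}$ iff it meets each $\mathbb{R}^n$ in a closed set; since every finite-dimensional subspace lies in some $\mathbb{R}^n$, while each $\mathbb{R}^n$ is itself finite-dimensional, this is the same as meeting each finite-dimensional subspace in a relatively closed set, using (i) to identify the relevant subspace topologies, and the open case is complementary.

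For (iii) I would use that a sequential direct limit of locally compact spaces commutes with finite products: since the $\mathbb{R}^n$ are locally compact, $\mathbb{R}^{<\omega}\times\mathbb{R}^{<\omega}$ is the direct limit of the $\mathbb{R}^n\times\mathbb{R}^n$ and $\mathbb{R}\times\mathbb{R}^{<\omega}$ is the direct limit of the $\mathbb{R}\times\mathbb{R}^n$. Addition $\mathbb{R}^n\times\mathbb{R}^n\to\mathbb{R}^n$ and scalar multiplication $\mathbb{R}\times\mathbb{R}^n\to\mathbb{R}^n$ are continuous and compatible with the direct-limit maps, so the corresponding operations on $\mathbb{R}^{<\omega}$ are continuous, and $\mathbb{R}^{<\omega}$ is a topological vector space. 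For (iv), given a Hausdorff real topological vector space $V$ of algebraic dimension $\aleph_0$ satisfying (ii), fix algebraic bases of $V$ and of $\mathbb{R}^{<\omega}$ and let $\Phi\colon\mathbb{R}^{<\omega}\to V$ be the induced linear bijection. On each pair of corresponding finite-dimensional subspaces $\Phi$ is a linear isomorphism; by the classical theorem that a finite-dimensional Hausdorff real topological vector space carries the unique standard topology, together with (i), $\Phi$ restricts to a homeomorphism between any such pair. Since $\Phi$ and $\Phi^{-1}$ carry finite-dimensional subspaces onto finite-dimensional subspaces and both spaces satisfy (ii), it follows that $\Phi$ is a homeomorphism, hence a topological isomorphism.

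Item (v) is the substantive part, and I would reduce it to the finite stages. Given a closed subgroup $H\leq\mathbb{R}^{<\omega}$, set $L=\spn_{\mathbb{R}}(H)$; each $L\cap\mathbb{R}^n$ is a linear subspace of $\mathbb{R}^n$, hence closed, so $L$ is closed by (ii), and a closed subspace of $\mathbb{R}^{<\omega}$ inherits property (ii) (its finite-dimensional subspaces are among those of $\mathbb{R}^{<\omega}$, and a finite-dimensional subspace is closed in any finite-dimensional subspace containing it). Hence by (iv), $L$ is topologically isomorphic to $\mathbb{R}^{<\omega}$ or to some $\mathbb{R}^k$, and replacing the ambient group by $L$ we may assume $H$ spans $V\cong\mathbb{R}^{<\omega}$ (or $\mathbb{R}^k$). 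Let $W$ be the largest linear subspace of $V$ contained in $H$; it is a linear subspace (a short check via $c(v+w)=cv+cw$) and it is closed (if $v_m\to v$ with $\mathbb{R}v_m\subseteq H$ then $cv_m\to cv$ for every $c$ and $H$ is closed), so $W\cong\mathbb{R}^{<\omega}$ or $\mathbb{R}^j$. Granting in addition that a Hausdorff quotient of $\mathbb{R}^{<\omega}$ by a closed subspace again satisfies (ii), the quotient $V/W$ is by (iv) topologically isomorphic to $\mathbb{R}^{<\omega}$ or $\mathbb{R}^m$; its subgroup $\bar H=H/W$ is closed (its preimage under $V\to V/W$ is $H$) and contains no line through $0$ (such a line would lift to a line through $0$ in $H$, forcing it into $W$), hence is a closed discrete subgroup, and a closed discrete subgroup of $\mathbb{R}^{<\omega}$ is free abelian of countable rank, i.e. isomorphic to $\bigoplus_\beta\mathbb{Z}$ with $\beta$ finite or $\omega$. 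Freeness of $\bar H$ splits the extension $0\to W\to H\to\bar H\to 0$; one wants the splitting to be topological, giving $H\cong W\oplus\bigoplus_\beta\mathbb{Z}\cong(\bigoplus_\alpha\mathbb{R})\oplus(\bigoplus_\beta\mathbb{Z})$. For the quotient statement one runs the complementary bookkeeping on a closed subgroup $N$: quotient first by the $\bigoplus_\alpha\mathbb{R}$ summand (leaving $\mathbb{R}^{<\omega}$ or $\mathbb{R}^m$ by the subspace analysis) and then by the remaining free part, which straightens to yield $(\bigoplus_\beta\mathbb{T})\oplus(\bigoplus_\gamma\mathbb{R})$.

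I expect the main obstacle to be precisely this passage from the finite stages to the limit in (v). The classical structure theorem gives, for each $n$, a basis of $\mathbb{R}^n$ adapted to $H\cap\mathbb{R}^n$ (exhibiting it in ``straight'' position as $\mathbb{R}^{a_n}\oplus\mathbb{Z}^{b_n}$), but to transfer this to $H$, $W$, and $\bar H$ --- and, crucially, to obtain topological direct sums rather than merely algebraic ones --- one must choose these adapted bases coherently, each extending the previous, which is a relative, inductive version of the lattice-straightening arguments behind Minkowski's theorem and Smith normal form, now required to respect the subspace part and the discrete part at every stage simultaneously. The two auxiliary claims I flagged in passing, that closed subspaces and Hausdorff quotients of $\mathbb{R}^{<\omega}$ again satisfy property (ii), are part of the same package and would be proved along the way. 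By contrast, items (i)--(iv) are essentially formal once the direct-limit picture and the classical finite-dimensional facts are in hand.
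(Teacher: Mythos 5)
The paper does not prove this proposition: it is stated as a citation of Brown--Higgins--Morris (\cite{brown_higgins_morris}), and the paper immediately goes on to record, also as a citation, the sharper Proposition~\ref{basis} (the adapted $\mathbb{R}$-basis statement) which it says is how item (v) is actually established in that reference. So there is no internal proof to compare your attempt against, and I will instead assess the proposal on its own terms.

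Items (i)--(iv) are essentially correct as outlined. For (i) and (ii) the direct-limit picture with closed embeddings does the work; for (iii) the fact you need is that sequential colimits along closed embeddings of locally compact Hausdorff spaces commute with finite products, which is a standard but not entirely trivial point and is one of the genuine contributions of the Brown--Higgins--Morris paper --- it would deserve either a citation or a short proof if you were writing this out. For (iv), your linear bijection $\Phi$ is a homeomorphism in each finite dimension by the uniqueness of the Hausdorff TVS topology, and property (ii) on both sides then upgrades this to a global homeomorphism, since $\Phi$ and $\Phi^{-1}$ permute the finite-dimensional subspaces; that argument is sound.

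For (v), however, the proposal has a real gap, and it is precisely the one you flag. Reducing to the span $L=\spn_{\mathbb{R}}(H)$, isolating the maximal linear subspace $W\subseteq H$, and observing that $\bar H=H/W$ is a closed discrete subgroup of $V/W$ is the right skeleton, but the statement you need is not merely ``$\bar H$ is free abelian of countable rank''; you need an $\mathbb{R}$-linearly independent $\mathbb{Z}$-basis for $\bar H$ so that the algebraic splitting $H\cong W\oplus\bar H$ can be realized by a \emph{topological} direct sum, and you need this basis to be chosen coherently across the finite stages (a basis of $\bar H\cap\mathbb{R}^n$ extending to one of $\bar H\cap\mathbb{R}^{n+1}$) so that the limit splitting respects the direct-limit topology. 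That is exactly the content of Proposition~\ref{basis}, and it is not a routine consequence of the finite-dimensional structure theorem applied levelwise: the levelwise adapted bases need not be nested, and the relative Smith-normal-form/Minkowski argument that makes them nested while simultaneously keeping the $\mathbb{Z}$-part $\mathbb{R}$-independent is the heart of the matter. (The paper itself needs an \emph{effective}, Borel-in-$x$ version of this coherent construction, which it supplies in Lemmas~\ref{ubasis}--\ref{wbasis}; those proofs show concretely the kind of stage-by-stage bookkeeping your ``relative, inductive version'' phrase is gesturing at.) The two auxiliary claims you deferred --- that closed linear subspaces and Hausdorff quotients of $\mathbb{R}^{<\omega}$ again satisfy property (ii) --- are also genuinely needed before (iv) can be invoked for $L$ and $V/W$, and neither is immediate; the quotient case in particular requires checking that the quotient map is open and interacts correctly with the finite stages. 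Without these three pieces the argument for (v) is an outline, not a proof.
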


Property (v)  of Proposition \ref{bhm} is the most important for our purposes, and in particular, it was proven by showing the following:
\begin{proposition}\label{basis}\emph{(Brown--Higgins--Morris)} If $G$ is a closed subgroup of $\mathbb{R}^{<\omega}$, then there is an $\mathbb{R}$-basis for $\mathbb{R}^{<\omega}$ of the form $$\{\vec{u_i}:i\in\alpha\}\cup\{\vec{v_i}:i\in\beta\}\cup\{\vec{w_i}:i\in\gamma\}$$ where $\alpha,\beta,\gamma$ are each finite or $\omega$, such that
\begin{itemize}
\item[(i)]$\spn_{\mathbb{R}}\{\vec{u_i}:i\in\alpha\}$ is the largest vector subspace of $G$,
\item[(ii)]$G\cap\spn_{\mathbb{R}}\{\vec{v_i}:i\in\beta\}=\spn_{\mathbb{Z}}\{\vec{v_i}:i\in\beta\}$, and 
\item[(iii)]$G\cap\spn_{\mathbb{R}}\{\vec{w_i}:i\in\gamma\}=\{\vec{0}\}$.
\end{itemize}\end{proposition}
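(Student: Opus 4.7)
The plan is to isolate the largest vector subspace of $G$, show that the quotient $G/V$ is free abelian by exhibiting it as an ascending union of pure finitely generated free abelian subgroups, and then complete a compatible $\mathbb{Z}$-basis to an $\mathbb{R}$-basis of $\mathbb{R}^{<\omega}$ by an algebraic complement.

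First, set $V=\{x\in\mathbb{R}^{<\omega}:\mathbb{R}x\subseteq G\}$; this is visibly the largest $\mathbb{R}$-subspace of $\mathbb{R}^{<\omega}$ contained in $G$. Writing $G_n=G\cap\mathbb{R}^n$ and $V_n=V\cap\mathbb{R}^n$, a routine check shows $V_n$ is the largest vector subspace of $G_n$, and since $V=\bigcup_nV_n$ is an ascending union of finite-dimensional subspaces, I can pick a basis $\{u_i:i\in\alpha\}$ of $V$ compatible with this filtration.

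The main step, which I expect to require the most care, is producing a $\mathbb{Z}$-basis of $G/V$ compatible with the filtration by the $G_n/V_n$. By Proposition~\ref{bhm}(i)(ii) each $G_n$ is closed in $\mathbb{R}^n$ in its standard topology, and the classical structure theorem for closed subgroups of $\mathbb{R}^n$ implies $G_n/V_n$ is a discrete, hence finitely generated free abelian, subgroup of $\mathbb{R}^n/V_n$. I would show the natural injection $G_n/V_n\hookrightarrow G_{n+1}/V_{n+1}$ (well-defined and injective because $V_{n+1}\cap G_n=V_n$) is pure: if $g\in G_{n+1}$, $h\in G_n$, and $m\geq 1$ satisfy $mg-h\in V_{n+1}$, then writing $mg=h+v$ with $v\in V_{n+1}$ and dividing by $m$ gives $g=h/m+v/m$ with $h/m=g-v/m\in G_{n+1}\cap\mathbb{R}^n=G_n$, so $g+V_{n+1}$ has a representative in $G_n$. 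Purity together with finite generation forces each $(G_{n+1}/V_{n+1})/(G_n/V_n)$ to be free, so the short exact sequences split and I obtain a $\mathbb{Z}$-basis $\{v_i+V:i\in\beta\}$ of $G/V$ whose initial segments are $\mathbb{Z}$-bases of the $G_n/V_n$.

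Since each $G_n/V_n$ is discrete in $\mathbb{R}^n/V_n$, any $\mathbb{Z}$-basis is automatically $\mathbb{R}$-linearly independent (a discrete subgroup of $\mathbb{R}^d$ has rank at most $d$), so $\{v_i+V\}$ is $\mathbb{R}$-independent in $\mathbb{R}^{<\omega}/V$ and $\{u_i\}\cup\{v_i\}$ is $\mathbb{R}$-independent in $\mathbb{R}^{<\omega}$. Extending by any $\{w_i:i\in\gamma\}$ spanning an algebraic complement of $\spn_\mathbb{R}(\{u_i\}\cup\{v_i\})$ yields a full basis. Property~(i) holds by construction; (ii) follows because any $g=\sum r_iv_i\in G$ has image $\sum r_i(v_i+V)\in G/V=\spn_\mathbb{Z}\{v_i+V\}$, and $\mathbb{R}$-independence of $\{v_i+V\}$ forces $r_i\in\mathbb{Z}$; (iii) follows because any $g\in G$ can be written $u+\sum n_iv_i\in\spn_\mathbb{R}(\{u_i\}\cup\{v_i\})$, which meets $\spn_\mathbb{R}\{w_i\}$ trivially. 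The hard part really is the purity argument: without it an ascending union of finitely generated free abelian groups need not be free (as in $\mathbb{Q}=\bigcup_n\tfrac{1}{n!}\mathbb{Z}$), and everything downstream depends on the division step enabled by $V_{n+1}$ being a vector subspace.
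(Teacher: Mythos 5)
Your proof is correct, and the skeleton matches the paper's own constructive development (the paper states Proposition~\ref{basis} with attribution to Brown--Higgins--Morris and then essentially re-proves it in Borel form via Lemmas~\ref{ubasis}, \ref{vbasis}, and \ref{wbasis}): isolate the largest vector subspace, handle the discrete part by a dimension-by-dimension induction producing compatibly nested $\mathbb{Z}$-bases, and finish with an algebraic complement. The genuine difference is in the key inductive step. The paper realizes the discrete part concretely as $D_x = G \cap V_x$ for a coordinate-spanned complement $V_x$ (convenient for the Borel selection machinery that follows) and then \emph{cites} Brown--Higgins--Morris, Proposition~4, for the fact that a $\mathbb{Z}$-basis of $D^n_x$ extends to one of $D^{n+1}_x$. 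You instead work with the abstract quotient $G/V$ and \emph{prove} the extension fact yourself: the division-by-$m$ step, which uses that $V_{n+1}$ is a genuine $\mathbb{R}$-subspace, shows $(G_{n+1}/V_{n+1})/(G_n/V_n)$ is torsion-free, hence free of finite rank, so each inclusion splits and the compatible bases exist. That self-contained purity argument is a nice gain in transparency, and it correctly pinpoints why the ascending union of finitely generated free abelian groups is free here (in contrast to $\mathbb{Q}=\bigcup_n\tfrac{1}{n!}\mathbb{Z}$); what you give up relative to the paper's version is the concrete coordinate-vector complement $V_x$, which the paper needs anyway because its real goal in Section~\ref{reductiontofreesection} is the Borel-effective refinement (Proposition~\ref{effbasis}), not just the existence statement. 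One small point worth making explicit: when you lift the $\mathbb{Z}$-basis $\{v_i+V\}$ to vectors $v_i$, choose each $v_i\in G$ (possible since every coset in $G/V$ meets $G$); this is what gives you $\spn_{\mathbb{Z}}\{v_i\}\subseteq G$ and hence the reverse inclusion in~(ii).
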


The reason this is useful here is that the potential stabilizers of our Borel $\mathbb{R}^{<\omega}$ action are precisely these closed subgroups of $\mathbb{R}^{<\omega}$, and points belonging to the same orbit must have the same stabilizer. To see this note that, because $\mathbb{R}^{<\omega}$ is abelian, $x E y$ implies the stabilizers $G_{x}=G_{y}$. And each $\mathbb{R}^n$ is a Polish group acting in a Borel manner on $X$ (by the restriction of $\mathbb{R}^{<\omega}$'s action), so by Corollary \ref{closedstabilizers} each $G_{x}\cap\mathbb{R}^n$ is a closed subgroup of $\mathbb{R}^n$. We will show that, given a point $x$, there is an invariant Borel way to construct the basis in Proposition \ref{basis} for $G_x$ and then this will allow us to define a free action of $(\bigoplus_{\beta}\mathbb{T})\oplus(\bigoplus_{\gamma}\mathbb{R})$ on $[x]_E$.

\subsection{Constructing the Bases for the Stabilizers} 
Let $G_x^n$ denote the stabilizer of $x$ under the restriction of the action to the $\mathbb{R}^n$ subspace, and note that each $G_x^{n}=G_{x}\cap\mathbb{R}^n$. Also, let $U_{x}^{n}=\{\vec{g}\in G^n_{x}:\forall q\in\mathbb{Q}(q\!\vec{g}\in G^n_{x})\}$, and note that this definition implies each $U_x^{n}$ is closed, each $U_x^{n}$ is the largest vector subspace of $\mathbb{R}^n$ contained in the subgroup $G_x^{n}$, and $U_{x}=\bigcup_{n=1}^{\infty}U_x^{n}$ is the closed subgroup of $\mathbb{R}^{<\omega}$ which is also the largest vector subspace of $\mathbb{R}^{<\omega}$ contained in $G_x$. We will now show that we can construct the bases of Proposition \ref{basis} for $G_x$ effectively from $x$.

First we need to check that we have the following:
\begin{lemma}\label{boreltosubspace}\emph{(S. Jackson, personal communication)}
For each $n$ the map $x\mapsto U^n_x$ from $X$ to $F(\mathbb{R}^{n})$ is Borel.
\end{lemma}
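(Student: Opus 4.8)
The plan is to show that $U^n_x$ is Borel-definable from $x$ by expressing it through countably many Borel conditions. First I would recall that $G^n_x = G_x \cap \mathbb{R}^n$ is a closed subgroup of $\mathbb{R}^n$, and the map $x \mapsto G^n_x$ into $F(\mathbb{R}^n)$ is Borel: this follows from Theorem \ref{boreltostabilizer} applied to the Borel $\mathbb{R}^n$-action on $X$, provided we know $E^X_{\mathbb{R}^n}$ is Borel, which it is since $\mathbb{R}^n$ is locally compact and acts with closed stabilizers (or directly, since $E^X_{\mathbb{R}^n}$ is the orbit equivalence relation of a Polish group action on a standard Borel space and is contained in the Borel set $E$). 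Actually, the cleaner route is to note that $x \mapsto G_x$ into $F(\mathbb{R}^{<\omega})$ restricted appropriately gives Borelness of $x \mapsto G^n_x$; in any case I would cite Becker--Kechris for the Borelness of $x \mapsto G^n_x$.

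Next I would use the Kuratowski--Ryll-Nardzewski selection theorem (Theorem \ref{selection}) to fix Borel functions $h_k : F(\mathbb{R}^n) \setminus \{\emptyset\} \to \mathbb{R}^n$ such that $\{h_k(F)\}_k$ is dense in $F$. Composing with $x \mapsto G^n_x$ we obtain Borel functions $x \mapsto \vec{g}_k(x) := h_k(G^n_x)$ whose values are dense in $G^n_x$. The key observation is that $U^n_x$, being the largest linear subspace contained in the closed subgroup $G^n_x$, equals the closed linear span of the set of those $\vec{g} \in G^n_x$ for which $\mathbb{R} \vec{g} \subseteq G^n_x$ — equivalently, using that $G^n_x$ is closed, $\vec{g} \in U^n_x$ iff $q\vec{g} \in G^n_x$ for all $q \in \mathbb{Q}$ (this is exactly the definition given in the text, and closedness upgrades the rational condition to a real one). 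So I would check: $\vec{g} \in U^n_x \iff \forall q \in \mathbb{Q}\, (q\vec{g} \in G^n_x)$, and the latter is a Borel condition in $(x, \vec{g})$ because membership $\vec{g} \in G^n_x$ is Borel (the graph of a Borel map into $F(\mathbb{R}^n)$ has Borel "belongs to the closed set" relation, since $\vec{g} \in F \iff \forall k\, (\vec{g} \notin B_k \text{ or } F \cap B_k \neq \emptyset)$ over a countable open basis $\{B_k\}$, and each $\{F : F \cap B_k \neq \emptyset\}$ is Effros-Borel).

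To then get that $x \mapsto U^n_x$ itself is Borel as a map into $F(\mathbb{R}^n)$, I would show that a countable dense subset of $U^n_x$ can be selected in a Borel way from $x$: for instance, take all rational-coefficient linear combinations of the finitely-many-at-a-time subfamilies of $\{\vec{g}_k(x)\}_k$ that happen to lie in $U^n_x$ (a Borel condition by the previous paragraph), and observe these are dense in $U^n_x$ — because $U^n_x$ is spanned (over $\mathbb{R}$) by vectors of $G^n_x$ lying in $U^n_x$, and the $\vec{g}_k(x)$ are dense in $G^n_x$, so a standard approximation argument in the finite-dimensional space $\mathbb{R}^n$ produces rational combinations of the $\vec{g}_k(x)$ arbitrarily close to any given point of $U^n_x$. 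Then $x \mapsto U^n_x$ is Borel into $F(\mathbb{R}^n)$ by checking $U^n_x \cap B \neq \emptyset$ is Borel for each basic open $B$, which holds since this is equivalent to the existence of one of the countably many Borel-selected candidate points landing in $B$.

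The main obstacle I anticipate is the density/approximation step: one must be careful that "largest linear subspace contained in $G^n_x$" genuinely coincides with the closed span of $\{\vec{g} \in G^n_x : \mathbb{R}\vec{g} \subseteq G^n_x\}$ and that density of the $\vec{g}_k(x)$ in $G^n_x$ really transfers to a Borel-selectable dense subset of $U^n_x$. Since $\mathbb{R}^n$ is finite-dimensional, $U^n_x$ has a basis of at most $n$ vectors drawn from $G^n_x$, and any point of $G^n_x$ near such a basis vector $\vec{v}$ with $\mathbb{R}\vec{v} \subseteq G^n_x$ need not itself generate a line inside $G^n_x$; so the argument has to go through: pick the genuine basis vectors of $U^n_x$, approximate each by $\vec{g}_k(x)$'s, and argue the approximants can be chosen (after small perturbation within the span, using that $U^n_x$ is relatively open in its own span and $G^n_x \cap \operatorname{span}(U^n_x) $ contains a full-rank lattice plus $U^n_x$) to lie in $U^n_x$. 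This bookkeeping in $\mathbb{R}^n$, combined with Luzin--Suslin to keep everything Borel, is where the real work lies; everything else is routine manipulation of Effros-Borel structures.
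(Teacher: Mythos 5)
Your proposal takes a genuinely different route from the paper and can be made to work, but the step you (correctly) identify as the obstacle is the one whose proposed resolution is confused, so as written there is a real gap.

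The paper avoids the density-transfer issue entirely: it writes down a direct first-order formula with quantifiers over countable sets (basic open boxes $B_k$ with $\bar{B_k}\subset O$, finite $F\subset\mathbb{Q}$, rational vectors $\vec z\in\mathbb{Q}^n$) asserting that rational points near $G^n_x$ which stay near $G^n_x$ under all rational scalings exist inside $B_k$. The forward direction is trivial approximation; the converse is a compactness argument inside $\bar{B_k}$ producing a limit $\vec g$ with $q\vec g\in G^n_x$ for every $q\in\mathbb{Q}$, hence $\vec g\in U^n_x\cap O$. No selection functions are needed and no structural facts about the shape of $U^n_x$ inside $G^n_x$ are invoked.

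Your approach instead applies Kuratowski--Ryll-Nardzewski to $G^n_x$ to get Borel maps $x\mapsto\vec g_k(x)$ dense in $G^n_x$, and then wants to filter down to a Borel dense family in $U^n_x$. The correct fact that rescues this is that $U^n_x$ is \emph{relatively open in $G^n_x$}: a closed subgroup of $\mathbb{R}^n$ is topologically isomorphic to $V\oplus L$ with $V$ a linear subspace and $L$ discrete, and $V\times\{0\}$ is open in $V\times L$, so the $\vec g_k(x)$'s that happen to lie in $U^n_x$ (a Borel condition in $x$ and $k$ via $\forall q\in\mathbb{Q}\,(q\vec g_k(x)\in G^n_x)$) are automatically dense in $U^n_x$. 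Your parenthetical in the last paragraph instead appeals to ``$U^n_x$ is relatively open in its own span'' --- which is vacuous, since $U^n_x=\operatorname{span}(U^n_x)$ --- and to ``$G^n_x\cap\operatorname{span}(U^n_x)$ contains a full-rank lattice plus $U^n_x$,'' which is false: since $\operatorname{span}(U^n_x)=U^n_x$ and $U^n_x$ is the \emph{maximal} subspace inside $G^n_x$, one has $G^n_x\cap\operatorname{span}(U^n_x)=U^n_x$ exactly, with no discrete summand. So as stated the density transfer is not justified. With the relative-openness observation substituted in, your argument closes and gives a legitimate alternative proof, trading the paper's elementary compactness computation for a structural fact about closed subgroups of $\mathbb{R}^n$ plus an application of the selection theorem. (The invocation of Luzin--Suslin in your final sentence is not needed anywhere; all the maps and conditions in play are already Borel for elementary reasons.)
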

\begin{proof} By the definition of the Effros structure on $F(\mathbb{R}^{n})$, it suffices to show that for each open $O\subset \mathbb{R}^{n}$ the set $\{x:U^{n}_{x}\cap O\neq\emptyset\}$ is Borel. Let $\{B_{k}\}_{k=1}^{\infty}$ be an enumeration of the basis for $\mathbb{R}^{n}$ consisting of the products of bounded open intervals with rational endpoints, let $A\subset\mathbb{N}$ be the set of all $k\in\mathbb{N}$ such that $\bar{B}_k\subset O$, fix the sup norm $||\vec{g}||=||(g_1,g_2,\ldots,g_n)||=\max\{|g_{i}| : 1\leq i\leq n\}$ on the vectors of $\mathbb{R}^n$, and let $\rho:\mathbb{R}^n\times\mathbb{R}^n \rightarrow [0,\infty)$ be the induced metric given by $\rho(\vec{g},\vec{h})=||\vec{g}-\vec{h}||$. We also let, as usual, $\rho(\vec{g},F)=\inf\{\rho(\vec{g},\vec{h}):\vec{h}\in F\}$ for a closed $F$. Now, we claim that 
\begin{equation*}U^{n}_{x}\cap O\neq\emptyset\Leftrightarrow\exists k\in A\,\forall m\forall F\in[\mathbb{Q}]^{<\omega}\exists \vec{z}\in\mathbb{Q}^{n}[\vec{z}\in B_{k} \,\&\, \forall q\in F(\rho(q\!\vec{z},G^{n}_{x})<\frac{1}{m})].\end{equation*}
For the forward direction, suppose $\vec{g}\in U^{n}_{x}\cap O$ and then let $B_k$ be a basic open set such that $\vec{g}\in B_{k}\subset \bar{B}_k\subset O$. Then, for any finite subset $F\subset\mathbb{Q}$ and $m\in\mathbb{N}$, since $\mathbb{Q}^{n}$ is dense in $\mathbb{R}^{n}$ we may let $\vec{z}\in\mathbb{Q}^n\cap B_k$ be s.t. $\rho(\vec{z},\vec{g})<\frac{1}{mp}$ where $p=\max\{1,\max\{|q|:q\in F\}\}$. Then, since $\vec{g}\in U^n_{x}$ and hence every $q\!\vec{g}\in G^n_{x}$, it follows that for each $q\in F$ we have $\rho(q\!\vec{z},G^n_{x})\leq\rho(q\!\vec{z},q\!\vec{g})=\max\{|qz_{i}-qg_{i}| : 1\leq i\leq n\}\leq p(\max\{|z_{i}-g_{i}| : 1\leq i\leq n\})=p(\rho(\vec{z},\vec{g}))<\frac{p}{mp}=\frac{1}{m}$.

For the converse, let $\{q_i\}_{i=1}^{\infty}$ be an enumeration of the rationals with $q_{1}=1$, and suppose we have $B_{k}\subset \bar{B}_k\subset O$ such that \begin{equation*}\forall m\forall F\in[\mathbb{Q}]^{<\omega}\exists \vec{z}\in\mathbb{Q}^{n}[\vec{z}\in B_{k} \,\&\, \forall q\in F(\rho(q\!\vec{z},G^{n}_{x})<\frac{1}{m})].\end{equation*} Then we have a sequence $\{\vec{z}\!\!_{m}\}_{m=1}^{\infty}$ such that each $\vec{z}\!\!_{m}\in B_{k}$ and $i\leq m\Rightarrow\rho(q_{i}\!\vec{z}\!\!_{m},G^{n}_{x})<\frac{1}{m}$. In particular, $\{\vec{z}\!\!_{m}\}_{m=1}^{\infty}\subset\bar{B}_k$ which is compact and so there must be a subsequence $\{\vec{z}\!\!_{m_j}\}_{j=1}^{\infty} $converging to some $\vec{g}$ which belongs to $O$ since $\bar{B}_k\subset O$ and also to $G_x^n$ since each $\rho(\vec{z}\!\!_{m_j},G^{n}_{x})<\frac{1}{m_j}$ and hence $\rho(\vec{g},G^{n}_{x})=0$. But also for each $q_i$ we have  $\rho(q_{i}\!\vec{z}\!\!_{m_j},G^{n}_{x})<\frac{1}{m_j}$ for all large enough $j$. Hence each $\rho(q_{i}\!\vec{g},G^{n}_{x})=0$ so $\forall q\in\mathbb{Q}(q\!\vec{g}\in G^n_x)$ and $\vec{g}\in U^n_x$.

Since by Theorem \ref{boreltostabilizer} we already know $x\mapsto G^n_x$ is Borel, the claim shows that $x\mapsto U^n_x$ is also Borel.\end{proof}

Now we begin by constructing from $x$ a basis for the vector subspace part of $G_x$.

\begin{lemma}\label{ubasis}Given a Borel action of $\mathbb{R}^{<\omega}$ on a standard Borel space $X$, there are invariant Borel maps $x\mapsto\alpha(x)\in\omega+1$ and $x\mapsto \{\vec{u_i}(x):i\in\alpha(x)\}\in\bigcup_{j\leq\omega}(\mathbb{R}^{<\omega})^j$ such that $\{\vec{u_i}(x):i\in\alpha(x)\}$ is a basis for $U_x$.
\end{lemma}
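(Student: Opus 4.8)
The plan is to build the basis greedily out of dense sequences in the finite-dimensional pieces $U^n_x$, extracting these sequences in a Borel, invariant fashion from Lemma \ref{boreltosubspace}. First I would apply the Kuratowski--Ryll-Nardzewski theorem (Theorem \ref{selection}) to $F(\mathbb{R}^n)$ to obtain Borel functions $f_k\colon F(\mathbb{R}^n)\setminus\{\emptyset\}\to\mathbb{R}^n$ with $\{f_k(F)\}_k$ dense in $F$, and compose them with the Borel map $x\mapsto U^n_x$ of Lemma \ref{boreltosubspace} (noting $\vec 0\in U^n_x$, so $U^n_x\neq\emptyset$). This yields Borel maps $x\mapsto f^n_k(x)\in\mathbb{R}^n\subseteq\mathbb{R}^{<\omega}$ with $\{f^n_k(x):k\in\omega\}$ dense in $U^n_x$, hence --- since $U^n_x$ is finite-dimensional and linear subspaces of finite-dimensional spaces are closed --- a spanning set of $U^n_x$; so the vectors $f^n_k(x)$ ($n\geq 1$, $k\in\omega$) together span $U_x=\bigcup_n U^n_x$, and each of them lies in $\mathbb{R}^{<\omega}$.

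Next I would record the routine fact that for any fixed finite list of index pairs the conditions ``$f^{n_1}_{k_1}(x),\dots,f^{n_m}_{k_m}(x)$ are $\mathbb{R}$-linearly independent'' and ``$\vec z\in\spn_{\mathbb{R}}\{f^{n_1}_{k_1}(x),\dots,f^{n_m}_{k_m}(x)\}$'' are Borel (in $x$, and jointly in $(\vec z,x)$), since over $\mathbb{R}$ they are cut out by the (non)vanishing of finitely many minors of a matrix whose entries are Borel functions of the data. Then, fixing a bijection $e\colon\omega\to\{1,2,\dots\}\times\omega$, I would define the basis by recursion along $e$: keep a finite linearly independent tuple, and at stage $j$, writing $(n,k)=e(j)$, append $f^n_k(x)$ to the current tuple if it does not already lie in the $\mathbb{R}$-span of that tuple, and otherwise leave the tuple unchanged. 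Each stage is Borel by the previous remark, so the resulting sequence $x\mapsto(\vec{u_i}(x))_{i<\alpha(x)}$ and its length $x\mapsto\alpha(x)\in\omega+1$ are Borel; one then packages $(\vec{u_i}(x))_{i<\alpha(x)}$ as an element of $\bigcup_{j\leq\omega}(\mathbb{R}^{<\omega})^j$, which is Borel because $\{x:\alpha(x)=j\}$ is Borel for each $j\leq\omega$. That the result is a basis for $U_x$ is immediate: the selected vectors are linearly independent by construction, and every $f^n_k(x)$ ends up in their $\mathbb{R}$-span --- it was either appended or already spanned, and the span only grows --- so they span $\bigcup_{n,k}\spn_{\mathbb{R}}\{f^n_k(x)\}=U_x$.

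Finally, for invariance: if $x\,E\,y$ then $G_x=G_y$ because $\mathbb{R}^{<\omega}$ is abelian, so $G^n_x=G_x\cap\mathbb{R}^n=G_y\cap\mathbb{R}^n=G^n_y$ and hence $U^n_x=U^n_y$ for every $n$, the latter being defined purely from the former; since the construction refers to $x$ only through the sequence $(U^n_x)_n$, it follows that $\alpha(x)=\alpha(y)$ and $\vec{u_i}(x)=\vec{u_i}(y)$ for all $i$. I do not expect a genuine obstacle here: the only points needing care are the Borelness of these finitary linear-algebra predicates and the observation that a dense subset of each finite-dimensional $U^n_x$ spans it, both routine; the real work has already been done in Lemma \ref{boreltosubspace}.
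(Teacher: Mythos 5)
Your proposal is correct and follows essentially the same strategy as the paper: both compose the Kuratowski--Ryll-Nardzewski selectors with the Borel maps $x\mapsto U^n_x$ from Lemma \ref{boreltosubspace} and then greedily extend a linearly independent tuple using a Borel span-membership test. The only cosmetic difference is that you flatten the enumeration over all pairs $(n,k)$ at once, whereas the paper proceeds dimension by dimension, exhausting $U^n_x$ at stage $n$ before moving to $n+1$; this is an organizational choice with no mathematical consequence.
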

\begin{proof} We prove the result by defining a procedure for the construction of such a basis. For each $n\in\mathbb{N}$, let $\{f^n_{k}:F(\mathbb{R}^n)\setminus\emptyset\rightarrow\mathbb{R}^{n}\,|\,k\in\mathbb{N}\}$ be the sequence of selection functions given by Theorem \ref{selection}. Also note that, given a finite sequence $\{\vec{y_i}\in\mathbb{R}^{n}:i\leq k\}$, then \begin{align*}\vec{h}\in\spn_{\mathbb{R}}(\{\vec{y_i}\}_{i\leq k})&\Leftrightarrow d(\vec{h},\spn_{\mathbb{Q}}(\{\vec{y_i}\}_{i\leq k}))=0\\
&\Leftrightarrow \forall p\in\mathbb{N}\,\exists \vec{q}\in\mathbb{Q}^{k}[ d(\vec{h}, \sum_{i\leq k}\! q_{i}\vec{y_i}) <\frac{1}{p} ],\end{align*}so $\vec{h}\in\spn_{\mathbb{R}}(\{\vec{y_i}\}_{i\leq k})$ and $\vec{h}\nin\spn_{\mathbb{R}}(\{\vec{y_i}\}_{i\leq k})$ are Borel statements. 

Now, we use the selection functions to find new $\mathbb{R}$-linearly independent vectors to add to the basis.

\noindent For $n=1$,
\begin{itemize}
\item If $\exists k\in\mathbb{N}[f^{1}_{k}(U^{1}_{x})\neq \vec{0}]$, let $k_{0}\in\mathbb{N}$ be least such that $f^{1}_{k_0}(U^{1}_{x})\neq \vec{0}$, let $\vec{u_0}(x)=f^{1}_{k_0}(U^{1}_x)$, and let $\alpha_{1}(x)=1$.
\item Otherwise, let $\alpha_{1}(x)=0$. Then the $n=1$ step halts and we move on to $n=2$.
\end{itemize}
For $n=m+1$, let $\{\vec{u_i}(x):i\in\alpha_{m}(x)\}$ be the sequence constructed from the previous $m$ dimensions. Then,
\begin{itemize}
\item If $\forall k\in\mathbb{N}\,[f^{m+1}_{k}(U^{m+1}_{x})\in\spn_{\mathbb{R}}\{\vec{u_i}(x):i\in\alpha_{m}(x)\}]$, then we let $\alpha_{m+1}(x)=\alpha_{m}(x)$, the $n=m+1$ step halts, and we move on to $n=m+2$.

\item If $\exists k\in\mathbb{N}\,[f^{m+1}_{k}(U^{m+1}_{x})\nin\spn_{\mathbb{R}}\{\vec{u_i}(x):i\in\alpha_{m}(x)\}]$, then we extend the sequence as follows:
\begin{itemize}
\item Let $k_1$ be least such that $f^{m+1}_{k_{1}}(U^{m+1}_{x})\nin\spn_{\mathbb{R}}\{\vec{u_i}(x):i\in\alpha_{m}(x)\}$, and let $\vec{u}\!_{\alpha_{m}(x)+1}(x)=f^{m+1}_{k_{1}}(U^{m+1}_{x})$.
\item If $\exists k\in\mathbb{N}\,[f^{m+1}_{k}(U^{m+1}_{x})\nin\spn_{\mathbb{R}}\{\vec{u_i}(x):i\in\alpha_{m}(x)+j\}]$, we let $k_{j+1}$ be least such that $f^{m+1}_{k_{j+1}}(U^{m+1}_{x})\nin\spn_{\mathbb{R}}\{\vec{u_i}(x):i\in\alpha_{m}(x)+j\}$, and let $\vec{u}\!_{\alpha_{m}(x)+j+1}(x)=f^{m+1}_{k_{j+1}}(U^{m+1}_{x})$.
\item Once $\vec{u}\!_{\alpha_{m}(x)+l}(x)$ is constructed so that $\forall k\in\mathbb{N}\,[f^{m+1}_{k}(U^{m+1}_{x})\in\spn_{\mathbb{R}}\{\vec{u_i}(x):i\in\alpha_{m}(x)+l\}]$, let $\alpha_{m+1}(x)=\alpha_{m}(x)+l$. Then the $n=m+1$ step halts and we move on to $n=m+2$.
\end{itemize}\end{itemize}
Finally, let $\alpha(x)=\sup\{\alpha_{n}(x):n\in\mathbb{N}\}$.

Since, by Theorem \ref{boreltostabilizer} each $x\mapsto G^n_x$ is Borel, and by Lemma \ref{boreltosubspace} each $x\mapsto U^n_x$ is Borel, this construction provides a Borel map $x\mapsto \{\vec{u_i}(x):i\in\alpha(x)\}$ where $\{\vec{u_i}(x):i\in\alpha(x)\}$ is selected to be a basis for $U_x$ as desired. To see this, suppose for contradiction that $\vec{h}\in U_x$ but $\vec{h}\nin\spn_{\mathbb{R}}\{\vec{u_i}(x):i\in\alpha(x)\}$. Then for some $m\in\mathbb{N}$ we would have $\vec{h}\in U_x^m$ but $\vec{h}\nin\spn_{\mathbb{R}}\{\vec{u_i}(x):i\in\alpha_{m}(x)\}$. Since $\spn_{\mathbb{R}}\{\vec{u_i}(x):i\in\alpha_{m}(x)\}$ is closed in $\mathbb{R}^m$, it follows that there would exits an open neighborhood $\vec{h}\in O\subset\mathbb{R}^m$ which is disjoint from $\spn_{\mathbb{R}}\{\vec{u_i}(x):i\in\alpha_{m}(x)\}$. But if that were the case, $O$ must contain some $f^{m}_{k}(U^{m}_{x})\nin\spn_{\mathbb{R}}\{\vec{u_i}(x):i\in\alpha_{m}(x)\}$ and the $m$th stage of the construction would not have halted.\end{proof}

Next, we will construct a $\mathbb{Z}$-basis for the discrete part of $G_x$ so that when combined with the basis for the vector subspace part of $G_x$ forms an $\mathbb{R}$-basis for $\spn_{\mathbb{R}}(G_x)$.

\begin{lemma}\label{vbasis}Given a Borel action of $\mathbb{R}^{<\omega}$ on a standard Borel space $X$, there are invariant Borel maps $x\mapsto\beta(x)\in\omega+1$ and $x\mapsto \{\vec{v_i}(x):i\in\beta(x)\}\in\bigcup_{j\leq\omega}(\mathbb{R}^{<\omega})^j$ such that $\{\vec{v_i}(x):i\in\beta(x)\}$ is linearly independent over $\mathbb{R}$ and $$G_{x}=U_{x}\oplus\bigoplus_{i\in\beta(x)}\mathbb{Z}\vec{v_i}(x).$$
\end{lemma}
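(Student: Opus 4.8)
The plan is to mimic the construction of Lemma \ref{ubasis}, now working one dimension at a time to peel off the discrete directions of $G_x$ modulo the vector part $U_x$ that we have already handled. At stage $n$ we will have a finite $\mathbb{R}$-linearly independent set $\{\vec{v_i}(x):i\in\beta_n(x)\}$ together with the previously constructed $\{\vec{u_i}(x):i\in\alpha(x)\}$, and we will maintain the invariant that $G_x\cap\mathbb{R}^n = U_x^n\oplus\bigoplus_{i\in\beta_n(x)}\mathbb{Z}\vec{v_i}(x)$ and that the $\vec{v_i}(x)$ together with an $\mathbb{R}$-basis of $U_x^n$ span $\spn_{\mathbb{R}}(G_x^n)$. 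The point is that $G_x^n = G_x\cap\mathbb{R}^n$ is a closed subgroup of $\mathbb{R}^n$, so by the classical structure theorem for closed subgroups of $\mathbb{R}^n$ (the finite-dimensional case of Proposition \ref{basis}, or Proposition \ref{bhm}(v)) it is of the form (vector subspace) $\oplus$ (lattice). Passing from $n$ to $n+1$, the quotient $G_x^{n+1}/(U_x^{n+1}+\spn_{\mathbb{R}}\{\vec{v_i}(x):i\in\beta_n(x)\})$ is a finitely generated free abelian group of rank $0$ or $1$, so at most one new generator is needed at each step.

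To make this effective, I would proceed as follows. Using the selection functions $f^{n+1}_k : F(\mathbb{R}^{n+1})\setminus\{\emptyset\}\to\mathbb{R}^{n+1}$ from Theorem \ref{selection} applied to the closed set $G_x^{n+1}$, I search for a $k$ such that $f^{n+1}_k(G_x^{n+1})\notin W_n(x) := \spn_{\mathbb{R}}(\{\vec{u_i}(x):i\in\alpha(x)\}\cup\{\vec{v_i}(x):i\in\beta_n(x)\})$ (a Borel condition, as noted in the proof of Lemma \ref{ubasis}). If no such $k$ exists, then since $\{f^{n+1}_k(G_x^{n+1})\}_k$ is dense in $G_x^{n+1}$ and $W_n(x)\cap\mathbb{R}^{n+1}$ is closed, we conclude $G_x^{n+1}\subseteq W_n(x)$, so $\spn_{\mathbb{R}}(G_x^{n+1})=W_n(x)$ and we set $\beta_{n+1}(x)=\beta_n(x)$, adding no new vector. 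If such a $k$ exists, pick the least one, call the resulting vector $\vec{g}$; then $\vec{g}+W_n(x)$ generates a rank-$1$ subgroup of the (rank $\le 1$) quotient, and I must replace $\vec{g}$ by the correct \emph{primitive} generator. Concretely, $L := \{t\in\mathbb{R} : t\vec{g}\in G_x^{n+1}+W_n(x)\}$ is a nontrivial discrete subgroup of $\mathbb{R}$ of the form $t_0\mathbb{Z}$ with $t_0\in(0,1]$; I locate $t_0$ as $\inf\{t>0 : \rho(t\vec{g}, G_x^{n+1}+\spn_{\mathbb{Z}}\{\vec{v_i}(x):i\in\beta_n(x)\} + U_x^{n+1})=0\}$, which is a Borel-computable real (an infimum of a Borel-parametrized closed condition on $\mathbb{Q}$, then take closure), and then set $\vec{v}_{\beta_n(x)}(x)$ to be the member of $G_x^{n+1}$ congruent to $t_0\vec{g}$ modulo $U_x^{n+1}$ — obtained by another application of the selection functions, searching for $f^{n+1}_k(G_x^{n+1})$ with $\rho(f^{n+1}_k(G_x^{n+1}) - t_0\vec{g}, U_x^{n+1}) = 0$, using that $x\mapsto U_x^{n+1}$ is Borel by Lemma \ref{boreltosubspace}. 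Set $\beta_{n+1}(x)=\beta_n(x)+1$. Finally $\beta(x)=\sup_n\beta_n(x)\in\omega+1$, and $\{\vec{v_i}(x):i\in\beta(x)\}$ is the desired sequence; invariance under $E$ is immediate since $x\mapsto G_x$, and hence the whole construction, depends only on the orbit.

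The verification that $\{\vec{v_i}(x):i\in\beta(x)\}$ is $\mathbb{R}$-independent and that $G_x = U_x\oplus\bigoplus_{i\in\beta(x)}\mathbb{Z}\vec{v_i}(x)$ follows the template of Lemma \ref{ubasis}: independence is built into the construction (each new $\vec{v}$ lies outside the $\mathbb{R}$-span of its predecessors); the inclusion $\supseteq$ holds because each $\vec{v_i}(x)\in G_x$ and $U_x\le G_x$; and for $\subseteq$, if some $\vec{h}\in G_x$ were not in $U_x\oplus\bigoplus_{i\in\beta(x)}\mathbb{Z}\vec{v_i}(x)$, then at the least $m$ with $\vec{h}\in\mathbb{R}^m$ either $\vec{h}\notin W_{m-1}(x)$ (contradicting that stage $m$ halted only after exhausting the selection functions, since $G_x^m$ is dense-approximated by them and $W_{m-1}(x)$ is closed) or $\vec{h}\in W_m(x)$ but its coefficient on $\vec{v}_{\beta_{m-1}(x)}(x)$ is non-integral, contradicting the minimality defining $t_0$. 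I expect the main obstacle to be the Borelness of the primitive-generator selection, i.e.\ verifying that $x\mapsto t_0$ and then $x\mapsto\vec{v}_{\beta_n(x)}(x)$ are Borel; this requires care in expressing ``$t_0\vec{g}\in G_x^{n+1}+U_x^{n+1}+\spn_{\mathbb{Z}}\{\vec{v_i}\}$'' as a countable Boolean combination of Borel conditions (approximating the $\mathbb{Z}$-span by finite sums and the closed sets via their Effros-Borel selectors and the metric $\rho$), much as the density argument was handled in Lemmas \ref{boreltosubspace} and \ref{ubasis}.
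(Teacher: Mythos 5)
The central structural claim of your inductive step --- that the quotient of $G_x^{n+1}$ by $U_x^{n+1}+\spn_{\mathbb{R}}\{\vec{v_i}(x):i\in\beta_n(x)\}$ has rank $0$ or $1$, so that ``at most one new generator is needed at each step'' --- is false, and the construction collapses at that point. The rank of the lattice part of $G_x^{n+1}$ can jump by more than one when passing from $\mathbb{R}^n$ to $\mathbb{R}^{n+1}$, because $G_x^n=G_x^{n+1}\cap\mathbb{R}^n$ can see far less of the lattice than the ambient dimension suggests. A concrete counterexample: let $G_x^{2}=\mathbb{Z}(1,\sqrt 2)\oplus\mathbb{Z}(0,1)$, a full-rank lattice in $\mathbb{R}^2$. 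Any element $a(1,\sqrt 2)+b(0,1)=(a,a\sqrt 2+b)$ lies in $\mathbb{R}^1$ only when $a\sqrt2+b=0$, i.e.\ $a=b=0$, so $G_x^1=\{0\}$, $U_x^1=U_x^2=\{0\}$, and $\beta_1(x)=0$. Passing to $n+1=2$ you must produce \emph{two} new $\vec{v_i}$'s, but your procedure searches for a single $\vec{g}$, divides by $t_0$, and then sets $\beta_{2}(x)=\beta_1(x)+1=1$, leaving a proper sublattice $\mathbb{Z}\vec{v_0}\subsetneq G_x^2$ and breaking the invariant $G_x^n=U_x^n\oplus\bigoplus_{i\in\beta_n(x)}\mathbb{Z}\vec{v_i}(x)$ that the rest of your argument relies on. You would need to iterate the search within a fixed stage, and at that point you also have to justify that the $\vec{v_i}$'s selected so far generate a \emph{primitive} sublattice of the discrete part of $G_x^{n+1}$ (so that the remaining quotient is torsion-free and a further primitive generator can be appended); this is not automatic and your write-up does not address it. There is also a mismatch between your two definitions of $t_0$: the set $L=\{t:t\vec{g}\in G_x^{n+1}+W_n(x)\}$, where $W_n(x)$ contains the real span of the $\vec{v_i}$'s, is not the same as the set defined with $G_x^{n+1}+\spn_{\mathbb{Z}}\{\vec{v_i}\}+U_x^{n+1}$, which reduces to just $G_x^{n+1}$.

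The paper sidesteps all of this by changing the decomposition. Having already built an $\mathbb{R}$-basis $\{\vec{u_i}(x)\}$ for $U_x$, it first constructs in a Borel way a fixed complementary subspace $V_x=\spn_{\mathbb{R}}\{\vec{e}_k:k\in A_x\}$ (using coordinate vectors and a greedy selection of $A_x$), and then works with $D_x=G_x\cap V_x$. Because $V_x$ is complementary to the \emph{entire} vector part $U_x$, each $D^n_x=D_x\cap\mathbb{R}^n$ is automatically a discrete closed subgroup of $\mathbb{R}^n$ --- no line through the origin can survive the intersection. Discreteness then does the heavy lifting: the Kuratowski--Ryll-Nardzewski selectors actually enumerate \emph{all} of $D^n_x$ (a dense subset of a discrete set is the whole set), so ``$\{\vec{y_i}\}$ is a $\mathbb{Z}$-basis of $D^n_x$'' becomes a directly Borel statement that can be searched over finite tuples of selectors, with the number $j$ of new vectors at stage $n+1$ allowed to be anything from $0$ up to $n+1-\alpha_{n+1}(x)-\beta_n(x)$. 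The nontrivial fact that a $\mathbb{Z}$-basis of $D^n_x$ extends to one of $D^{n+1}_x$ (which is exactly the primitivity/extension issue your approach leaves open) is cited from Brown--Higgins--Morris. If you want to rescue your direct approach, you will need at minimum to (a) allow multiple new generators per stage, (b) prove the primitive-extension property for the nested lattices $G_x^n\cap V$, and (c) give a single, coherent Borel formula for $t_0$; in effect you will have reconstructed the role that $D_x$ plays in the paper.
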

\begin{proof}  Let the basis $\{\vec{u_i}(x):i\in\alpha(x)\}$ for $U_x$ be constructed as in Lemma \ref{ubasis}. First, we construct a basis for a complementary subspace $V_{x}$ of $U_{x}$ in $\mathbb{R}^{<\omega}$.

Given any finite sequence $\{\vec{y_i}\in\mathbb{R}^{<\omega}:i\leq k\}$,
\begin{align*}\vec{h}\in\spn_{\mathbb{R}}&(U_{x}\cup\{\vec{y_j}\}_{j\leq k})\\
&\Leftrightarrow \vec{h}\in\spn_{\mathbb{R}}\{\vec{u_i}(x):i\in\alpha(x)\}+\spn_{\mathbb{R}}\{\vec{y_j}:j\leq k\}\\
&\Leftrightarrow \exists n\,[\vec{h}\in\spn_{\mathbb{R}}\{\vec{u_i}(x):i\in\alpha_{n}(x)\}+\spn_{\mathbb{R}}\{\vec{y_j}:j\leq k\}]\\
&\Leftrightarrow \exists n\,[d(\vec{h}, \spn_{\mathbb{Q}}\{\vec{u_i}(x):i\in\alpha_{n}(x)\}\!+\!\spn_{\mathbb{Q}}\{\vec{y_j}:j\leq k\})\!=\!0]\\
&\Leftrightarrow \exists n\,\forall p\in\mathbb{N}\,\exists\vec{q}\in\mathbb{Q}^{\alpha_{n}(x)+k}\,[d(\vec{h},\!\!\sum_{i\in\alpha_{n}(x)}\!\!q_{i}\!\vec{u_i}+\sum_{j\leq k}\!\!q_{\alpha_{n}(x)+j}\!\vec{y_j}\,)<\frac{1}{p}],\end{align*}
so $\vec{h}\in\spn(U_{x}\cup\{\vec{y_j}\}_{j\leq k})$ and $\vec{h}\nin\spn(U_{x},\{\vec{y_j}\}_{j\leq k})$ are Borel statements. Now, let  each $\vec{e}\!\!_{k}\in\mathbb{R}^{<\omega}$ for $k\in\mathbb{N}$ be the usual coordinate vector where $(\vec{e}\!\!_{k})_{k}=1$ and $(\vec{e}\!\!_{k})_{j}=0$ for all $j\neq k$. We construct a subset $A_{x}\subset\mathbb{N}$ inductively as follows:
\begin{itemize}
\item If $\forall k\in\mathbb{N}\,[\vec{e}\!\!_{k}\in \spn_{\mathbb{R}}(U_x)]$ let $A_{x}=\emptyset$.
\item If $\exists k\in\mathbb{N}\,[\vec{e}\!\!_{k}\nin \spn_{\mathbb{R}}(U_x)]$, then 
\begin{itemize} \item let $k_0$ be the least such $k$, and
\item if $\exists k> k_{i}\,[\vec{e}\!\!_{k}\nin \spn_{\mathbb{R}}(U_{x}\cup\{\vec{e}\!_{k_j}:j\leq i\})]$, let $k_{i+1}$ be the least such $k$, 
\item if $\forall k> k_{i}\,[\vec{e}\!\!_{k}\in \spn_{\mathbb{R}}(U_{x}\cup\{\vec{e}\!_{k_j}:j\leq i\})]$, then the construction halts and we let $A_{x}$ be the set of $k_{j}$ for $j\leq i$.
\item if the construction does not halt then we let $A_{x}=\{k_{i}:i\in\mathbb{N}\}$.\end{itemize}\end{itemize}

We let $V_{x}=\spn_{\mathbb{R}}\{\vec{e}\!_{k}:k\in A_{x}\}$. That $U_{x}\cap V_{x}=\{\vec{0}\}$ is clear. And $\vec{h}\in\mathbb{R}^{<\omega}\Rightarrow\exists n(\vec{h}\in\mathbb{R}^{n})\Rightarrow\vec{h}\in\spn_{\mathbb{R}}(\{\vec{e}\!\!_{k}:k\leq n\})$ where each of these $\vec{e}\!\!_{k}$'s is an element of $\spn_{\mathbb{R}}(U_{x}\cup\{\vec{e}\!_{k}:k\in A_{x}\})$. So $\mathbb{R}^{<\omega}=U_{x}\oplus V_{x}$.

Now we let $D_{x}=G_{x}\cap V_{x}$ and let each $D^n_{x}=D_{x}\cap\mathbb{R}^n$. Then each $D^n_{x}=G^n_{x}\cap\spn_{\mathbb{R}}\{\vec{e}\!_{k}:k\in A_{x}\,\,\&\,\,k\leq n\}$. Hence each $D^n_x$ is a closed subgroup of $\mathbb{R}^n$, and $D_{x}$ is a closed subgroup of $\mathbb{R}^{<\omega}$ in the topology discussed at the beginning of Section \ref{subgrpsofRinfty}. Also, each $D^n_x$ must be discrete in $\mathbb{R}^{n}$. To see this, note that $D^n_{x}\subset G^n_{x}$ cannot contain any whole line passing through $\vec{0}$ since then that line would be contained in $U_{x}$. Since every non-discrete closed subgroup of $\mathbb{R}^{n}$ must contain a whole line through the origin (See Ch.VII, $\S 1$, Proposition 3 in \cite{bourbaki}), this means $D^n_{x}$ must be discrete. But if $D^n_{x}$ is closed discrete then every one of its subsets is closed. Hence every subset $F\subset D_{x}$ is such that each $F\cap\mathbb{R}^{n}$ is closed in $\mathbb{R}^n$. So all subsets of $D_{x}$ are closed in $\mathbb{R}^{<\omega}$, and it follows that $D_{x}$ is a discrete subgroup of $\mathbb{R}^{<\omega}$. By Proposition 4 of \cite{brown_higgins_morris} and its proof, it follows that $D_{x}$ is topologically isomorphic to either $\mathbb{Z}^{<\omega}$ or $\mathbb{Z}^{n}$ for some $n$ and has a $\mathbb{Z}$-basis which is linearly independent over $\mathbb{R}$. Moreover, they show that the basis may be constructed inductively, i.e., given a $\mathbb{Z}$-basis for $D^n_x$ there exists a $\mathbb{Z}$-basis for $D^{n+1}_x$ which contains the $D^n_x$ basis. Also they get for free that the $\mathbb{Z}$-basis is linearly independent over $\mathbb{R}$ because any subset of an $\mathbb{R}^n$ with a discrete span which is linearly independent over $\mathbb{Z}$ must also be linearly independent over $\mathbb{R}$. (It follows from Ch.VII, $\S 1$, Proposition 1 of \cite{bourbaki} that if $\spn_{\mathbb{Z}}\{\vec{a_i}:i\leq p\}$ is discrete and $\sum_{i\leq p}r_{i}\vec{a_{i}}=\vec{0}$ then there must be some rational combination $\sum_{i\leq p}q_{i}\vec{a_{i}}=\vec{0}$. But then by multiplying by a common denominator this would contradict the $\mathbb{Z}$-independence.) Thus, it suffices now to provide a Borel procedure for selecting the extensions of the bases for the $D^n_x$'s, which we know to exist, as we induct up the dimension.

To do this we will need to know that, for each $n$, the map $x\mapsto D^n_x$ from $X$ to $F(\mathbb{R}^{n})$ is Borel. So for each open $O\subset \mathbb{R}^{n}$ we want to check that $D^n_{x}\cap O\neq\emptyset$ is a Borel statement. Let $\{B_{k}\}_{k=1}^{\infty}$ be an enumeration of the basis for $\mathbb{R}^{n}$ consisting of the products of bounded open intervals with rational endpoints, and let $N_{O}\subset\mathbb{N}$ be the set of all $k\in\mathbb{N}$ such that $\bar{B}_k\subset O$. Also, let $A^n_{x}=\{k\in\mathbb{N}:k\in A_{x}\,\,\&\,\,k\leq n\}$. We claim that 
\begin{equation*}D^{n}_{x}\cap O\!\neq\!\emptyset\Leftrightarrow\exists k\in N_{O}\forall m\in\mathbb{N}\,\exists\vec{q}\in\mathbb{Q}^{A^n_x}[\sum_{i\in A^n_x}q_{i}\!\vec{e}\!_{i}\in B_{k} \,\&\, d(\sum_{i\in A^n_x}q_{i}\!\vec{e}\!\!_{i}\,, \,G^n_x)<\frac{1}{m}].\end{equation*}
For the forward direction, if $\vec{g}\in D^{n}_{x}\cap O=G^n_{x}\cap (V_{x}\cap\mathbb{R}^{n})\cap O$ we may choose any basic open $B_k$ with $\vec{g}\in B_{k}\subset \bar{B}_k\subset O$. Then since $\spn_{\mathbb{Q}}\{\vec{e}\!_{i}: i\in A^n_{x}\}$ is dense in $V_{x}\cap\mathbb{R}^{n}$, we may always find a sum $\sum_{i\in A^n_x}q_{i}\!\vec{e}$ in $\spn_{\mathbb{Q}}\{\vec{e}\!_{i}: i\in A^n_{x}\}$ which is within some $\epsilon$ neighborhood of $\vec{g}$ that is contained in $B_k$ and where $\epsilon<1/m$. For the converse, note that the statement says we have a $B_{k}\subset \bar{B}_k\subset O$ such that $d(B_{k}\cap\spn_{\mathbb{Q}}\{\vec{e}\!_{i}: i\in A^n_{x}\},\,G^n_{x})=0$. Hence we may choose a sequence $\{\vec{z}\!\!_{m}\}_{m=1}^{\infty}\subset B_{k}\cap\spn_{\mathbb{Q}}\{\vec{e}\!_{i}: i\in A^n_{x}\}$ where each $d(\vec{z}\!\!_{m},\,G^n_{x})<\frac{1}{m}$. Then $\bar{B}_k$ is compact and so there must be a subsequence $\{\vec{z}\!\!_{m_j}\}_{j=1}^{\infty} $ converging to some $\vec{g}$ which belongs to $O$ since $\bar{B}_k\subset O$, belongs to $G_x^n$ since each $d(\vec{z}\!\!_{m_j},G^{n}_{x})<\frac{1}{m_j}$ and hence $d(\vec{g},G^{n}_{x})=0$, and belongs to $V_{x}\cap\mathbb{R}^n$ since $\vec{z}\!\!_{m_j}\rightarrow \vec{g}$ implies each $g_{i}$ must also be zero whenever $i\nin A^n_{x}$.

Finally, we may define a procedure for constructing the desired basis. Again let each $\{f^n_{k}:F(\mathbb{R}^n)\setminus\emptyset\rightarrow\mathbb{R}^{n}\,|\,k\in\mathbb{N}\}$ be the sequence of selection functions given by Theorem \ref{selection}. Note that since each $D^n_{x}$ is discrete and $\{f^n_{k}(D^n_{x}):k\in\mathbb{N}\}$ is dense in $D^n_x$, we have that $D^n_{x}=\{f^n_{k}(D^n_{x}):k\in\mathbb{N}\}$. So, to say that a finite sequence $\{\vec{y_i}\in D^n_{x}:i\leq m\}$ is a $\mathbb{Z}$-basis for $D^n_{x}$ is equivalent to $$\forall k\,\exists\vec{z}\in\mathbb{Z}^{m}\,[\,f^n_{k}(D^n_{x})=\sum_{i\in m}z_{i}\!\vec{y_{i}}\,]\,\,\&\,\,\forall\vec{z}\in\mathbb{Z}^{m}\,[\,\sum_{i\in m}z_{i}\!\vec{y_{i}}= 0\Rightarrow \vec{z}=\vec{0}\,]$$ and hence is a Borel statement. Now, for each $n,j\in\mathbb{N}$, let $\phi^n_{j}:\mathbb{N}\rightarrow\{f^n_{k}(D^n_{x}):k\in\mathbb{N}\}^{j}$ be an enumeration of the $j$-tuples of the $f^n_{k}(D^n_{x})$'s. We select the basis as follows:

For $n=1$,
\begin{itemize}
\item If $\exists k\in\mathbb{N}[f^{1}_{k}(D^{1}_{x})\neq \vec{0}]$, let $k_0$ be least such that $\{f^{1}_{k_0}(D^{1}_{x})\}$ is a $\mathbb{Z}$-basis for $D^1_{x}$ and let $\vec{v_0}(x)=f^{1}_{k_0}(D^{1}_{x})$ and $\beta_{1}(x)=1$.
\item If $\forall k\in\mathbb{N}[f^{1}_{k}(D^{1}_{x})= \vec{0}]$, let $\beta_{1}(x)=0$. Then the $n=1$ step halts and we move on to $n=2$.
\end{itemize}
(Note that, since our new basis vectors are independent from the $\vec{u_i}(x)$'s and also $\mathbb{R}$-linearly independent, the most new vectors that may be added at any $n$th stage is $n-\alpha_{n}(x)-\beta_{n-1}(x)$.)

For $n=m+1$, let $\{\vec{v_i}(x):i\in\beta_{m}(x)\}$ be the sequence constructed from the previous $m$ dimensions. Then,
\begin{itemize}
\item If $\{\vec{v_i}(x):i\in\beta_{m}(x)\}$ is a $\mathbb{Z}$-basis for $D^{m+1}_x$, let $\beta_{m+1}(x)=\beta_{m}(x)$. Then the $n=m+1$ step halts and we move on to $n=m+2$.

\item If $\{\vec{v_i}(x):i\in\beta_{m}(x)\}$ is not a $\mathbb{Z}$-basis for $D^{m+1}_x$, let $j$ be the unique natural $\leq n-\alpha_{m+1}(x)-\beta_{m}(x)$ such that $\exists l\in\mathbb{N}\,[\{\vec{v_i}(x):i\in\beta_{m}(x)\}\cup\phi^n_{j}(l)$ is a $\mathbb{Z}$-basis for $D^{m+1}_x\,]$, let $l_{0}$ be the least such $l$, let $\beta_{m+1}(x)=\beta_{m}(x)+j$, and let $\{\vec{v_i}(x):i\in\beta_{m+1}(x)\}=\{\vec{v_i}(x):i\in\beta_{m}(x)\}^{\frown}\{(\phi^n_{j}(l_0))_{i}:i\leq j\}$.
\end{itemize}

Finally, let $\beta(x)=\sup\{\beta_{n}(x):n\in\mathbb{N}\}$.

Since we have shown that  each $x\mapsto D^n_{x}$ is Borel, and that the necessary extensions of the $\mathbb{Z}$-basis from $D^n_{x}$ to $D^{n+1}_x$ exist, this construction provides a Borel map $x\mapsto \{\vec{v_i}(x):i\in\beta(x)\}$ where $\{\vec{v_i}(x):i\in\beta(x)\}$ is a $\mathbb{Z}$-basis (therefore also linearly independent over $\mathbb{R}$) for $G_{x}\cap V_{x}$ where $\mathbb{R}^{<\omega}=U_x\oplus V_x$. Hence $G_x$ is equal to the internal sum $(\bigoplus_{i\in\alpha(x)}\mathbb{R}\!\vec{u_i})\oplus(\bigoplus_{i\in\beta(x)}\mathbb{Z}\!\vec{v_i})$.
\end{proof}

Now we may construct a basis for a closed subspace complementary to $\spn_{\mathbb{R}}(G_x)$.
\begin{lemma}\label{wbasis}Given a Borel action of $\mathbb{R}^{<\omega}$ on a standard Borel space $X$, there are invariant Borel maps $x\mapsto\gamma(x)\in\omega+1$ and $x\mapsto \{\vec{w_i}(x):i\in\gamma(x)\}\in\bigcup_{j\leq\omega}(\mathbb{R}^{<\omega})^j$ such that $\{\vec{w_i}(x):i\in\gamma(x)\}$ is a linearly independent set and $$\mathbb{R}^{<\omega}=\spn_{\mathbb{R}}(G_{x})\oplus(\bigoplus_{i\in\gamma(x)}\!\!\mathbb{R}\!\vec{w_i}).$$
\end{lemma}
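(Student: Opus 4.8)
The plan is to imitate, almost verbatim, the construction of the complementary subspace $V_x$ carried out inside the proof of Lemma \ref{vbasis}, except that now I want a complement of the full span $\spn_{\mathbb{R}}(G_x)$ rather than of $U_x$. First I would invoke Lemmas \ref{ubasis} and \ref{vbasis} to obtain the invariant Borel maps $x\mapsto\{\vec{u_i}(x):i\in\alpha(x)\}$ and $x\mapsto\{\vec{v_i}(x):i\in\beta(x)\}$. Since $G_x=U_x\oplus\bigoplus_{i\in\beta(x)}\mathbb{Z}\vec{v_i}(x)$ with the $\vec{v_i}(x)$ lying in a complement $V_x$ of $U_x$, the set $\{\vec{u_i}(x):i\in\alpha(x)\}\cup\{\vec{v_i}(x):i\in\beta(x)\}$ is an $\mathbb{R}$-basis of $\spn_{\mathbb{R}}(G_x)$, and it comes equipped with the Borel finite approximations $\{\vec{u_i}(x):i\in\alpha_n(x)\}\cup\{\vec{v_i}(x):i\in\beta_n(x)\}$ whose spans increase to $\spn_{\mathbb{R}}(G_x)$. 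In particular $\spn_{\mathbb{R}}(G_x)$ is an increasing union of finite-dimensional, hence (by Proposition \ref{bhm}(i)) closed, subspaces; and exactly as in the displayed computation in the proof of Lemma \ref{vbasis}, the predicate ``$\vec{h}\in\spn_{\mathbb{R}}(G_x\cup\{\vec{y_j}\}_{j\leq k})$'' and its negation are Borel in $(x,\vec{h},\vec{y_0},\ldots,\vec{y_k})$, because membership is witnessed by some $n$ together with a rational linear combination of $\{\vec{u_i}(x):i\in\alpha_n(x)\}$, $\{\vec{v_i}(x):i\in\beta_n(x)\}$, and the $\vec{y_j}$, approximating $\vec{h}$ arbitrarily well.

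Next I would run the same greedy algorithm on the standard coordinate vectors $\vec{e}_k\in\mathbb{R}^{<\omega}$: scanning $k\in\mathbb{N}$ in increasing order, put $k$ into a set $B_x\subseteq\mathbb{N}$ precisely when $\vec{e}_k\notin\spn_{\mathbb{R}}(G_x\cup\{\vec{e}_j:j\in B_x,\ j<k\})$, mirroring the construction of $A_x$ in Lemma \ref{vbasis} with $U_x$ replaced by $\spn_{\mathbb{R}}(G_x)$. I would then let $\gamma(x)$ be the order type ($\leq\omega$) of $B_x$ and let $x\mapsto\{\vec{w_i}(x):i\in\gamma(x)\}$ be the increasing enumeration of $\{\vec{e}_k:k\in B_x\}$.

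Finally I would verify the required properties. Linear independence of $\{\vec{w_i}(x):i\in\gamma(x)\}$ is immediate since it is a subset of the standard basis of $\mathbb{R}^{<\omega}$. For $\spn_{\mathbb{R}}(G_x)\cap\spn_{\mathbb{R}}\{\vec{w_i}(x):i\in\gamma(x)\}=\{\vec{0}\}$: a nontrivial combination of the $\vec{w_i}(x)$ lying in $\spn_{\mathbb{R}}(G_x)$ would, on isolating the largest $k\in B_x$ with nonzero coefficient, yield $\vec{e}_k\in\spn_{\mathbb{R}}(G_x\cup\{\vec{e}_j:j\in B_x,\ j<k\})$, contradicting $k\in B_x$. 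For spanning: any $\vec{h}\in\mathbb{R}^{<\omega}$ lies in some $\mathbb{R}^n$, hence in $\spn_{\mathbb{R}}\{\vec{e}_k:k\leq n\}$, and an induction on $k\leq n$ shows each $\vec{e}_k\in\spn_{\mathbb{R}}(G_x\cup\{\vec{e}_j:j\in B_x\})$ — either $k\in B_x$, or by definition of $B_x$ we have $\vec{e}_k\in\spn_{\mathbb{R}}(G_x\cup\{\vec{e}_j:j\in B_x,\ j<k\})$ and the inductive hypothesis applies to those $\vec{e}_j$. Invariance of $x\mapsto\gamma(x)$ and $x\mapsto\{\vec{w_i}(x):i\in\gamma(x)\}$ holds because the whole construction depends only on $G_x$ together with the already-invariant maps from Lemmas \ref{ubasis} and \ref{vbasis}, and $x\mathrel{E}y$ implies $G_x=G_y$. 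Borelness follows from Theorem \ref{boreltostabilizer}, Lemma \ref{boreltosubspace}, the Borelness of the maps from Lemmas \ref{ubasis} and \ref{vbasis}, and the Borelness of the span-membership predicates noted above.

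The step needing the most care — and the only one calling for a genuine argument rather than bookkeeping — is the Borelness of ``$\vec{h}\in\spn_{\mathbb{R}}(G_x\cup\{\vec{y_j}\}_{j\leq k})$''; but since this is handled exactly as the analogous predicate in the proof of Lemma \ref{vbasis}, the proof of the present lemma will be essentially a transcription of the $V_x$-construction there.
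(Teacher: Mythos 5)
Your proposal is correct and follows essentially the same approach as the paper: it invokes Lemmas \ref{ubasis} and \ref{vbasis}, establishes the Borelness of the span-membership predicate via the same finite-stage rational approximation, runs the same greedy selection over the standard coordinate vectors $\vec{e}_k$ with $U_x$ replaced by $\spn_{\mathbb{R}}(G_x)$, and verifies independence, trivial intersection, and spanning by the same arguments.
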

\begin{proof} Let $\{\vec{u_i}(x):i\in\alpha(x)\}$ and $\{\vec{v_i}(x):i\in\beta(x)\}$ be constructed as in the proofs of Lemmas \ref{ubasis} and \ref{vbasis}, and again let each $\vec{e}\!\!_{k}\in\mathbb{R}^{<\omega}$ for $k\in\mathbb{N}$ be the usual coordinate vector where $(\vec{e}\!\!_{k})_{k}=1$ and $(\vec{e}\!\!_{k})_{j}=0$ for all $j\neq k$. Note that, given any finite sequence $\{\vec{y_i}\in\mathbb{R}^{n}:i\leq k\}$, then
\begin{align*}\vec{h}\in&\spn_{\mathbb{R}}(G_{x}\cup\{\vec{y_i}\in\mathbb{R}^{n}:i\leq k\})\\
&\Leftrightarrow \exists n\,[\vec{h}\in\spn_{\mathbb{R}}(\{\vec{u_i}(x)\}_{i\in\alpha_{n}(x)}\cup\{\vec{v_i}(x)\}_{i\in\beta_{n}(x)}\cup\{\vec{y_i}\}_{i\leq k})]\\
&\Leftrightarrow\exists n\, [d(\vec{h},\spn_{\mathbb{Q}}(\{\vec{u_i}(x)\}_{i\in\alpha_{n}(x)}\cup\{\vec{v_i}(x)\}_{i\in\beta_{n}(x)}\cup\{\vec{y_i}\}_{i\leq k}))=0]\\
&\Leftrightarrow \exists n\,\forall l\in\mathbb{N}\,\exists \vec{q}\in\mathbb{Q}^{\alpha_{n}}\exists\vec{p}\in\mathbb{Q}^{\beta_{n}} \exists\vec{r}\in\mathbb{Q}^{k}\\&\hspace{.5in} [\, d(\vec{h}, \!\sum_{i\in\alpha_{n}(x)}\!\! q_{i}\vec{u_i}+\!\!\sum_{i\in\beta_{n}(x)}\!\! p_{i}\vec{v_i}+\sum_{i\leq k} r_{i}\vec{y_i}) <\frac{1}{l}\, ],\end{align*}
and we have that $\vec{h}\in\spn_{\mathbb{R}}(G_x)$ and $\vec{h}\nin\spn_{\mathbb{R}}(G_x)$ are Borel statements.

We construct the $\vec{w_i}(x)$'s as follows:\begin{itemize}
\item If $\forall k\in\mathbb{N}\,[\vec{e}\!\!_{k}\in \spn_{\mathbb{R}}(G_x)]$ let $\gamma(x)=0$ and $\{\vec{w_i}(x):i\in\gamma(x)\}=\emptyset$.
\item If $\exists k\in\mathbb{N}\,[\vec{e}\!\!_{k}\nin \spn_{\mathbb{R}}(G_x)]$, then 
\begin{itemize} \item let $k_0$ be the least such $k$, and let $\vec{w_0}(x)=\vec{e}\!\!_{k_0}$,
\item if $\exists k> k_{i}\,[\vec{e}\!\!_{k}\nin \spn_{\mathbb{R}}(G_{x}\cup\{\vec{w_j}(x):j\leq i\})]$, let $k_{i+1}$ be the least such $k$, and let $\vec{w}\!_{i+1}(x)=\vec{e}\!\!_{k_{i+1}}$, 
\item if $\forall k> k_{i}\,[\vec{e}\!\!_{k}\in \spn_{\mathbb{R}}(G_{x}\cup\{\vec{w_j}(x):j\leq i\})]$, then $\gamma(x)=i+1$ and the construction halts.
\item if the construction does not halt, then $\gamma(x)=\omega$.\end{itemize}\end{itemize}

That $\spn_{\mathbb{R}}(G_{x})\cap\spn_{\mathbb{R}}\{\vec{w_i}:i\in\gamma(x)\}=\{\vec{0}\}$ and the $\vec{w_i}$'s are independent is clear. And $\vec{h}\in\mathbb{R}^{<\omega}\Rightarrow\exists n(\vec{h}\in\mathbb{R}^{n})\Rightarrow\vec{h}\in\spn_{\mathbb{R}}(\{\vec{e}\!\!_{k}:k\leq n\})$ where each of these $\vec{e}\!\!_{k}$'s is an element of $\spn_{\mathbb{R}}(G_{x}\cup\{\vec{w_i}:i\in\gamma(x)\})$. Hence $\mathbb{R}^{<\omega}\subseteq$ (and therefore $=$) $\spn_{\mathbb{R}}(G_{x})\oplus\spn_{\mathbb{R}}\{\vec{w_i}:i\in\gamma(x)\}$.
\end{proof}

\subsection{Reduction to Free Actions of Countable Sums of $\mathbb{R}$ and $\mathbb{T}$}
Combining Lemmas \ref{ubasis}, \ref{vbasis}, and \ref{wbasis} we have proven the following refinement of Proposition \ref{basis}:

\begin{proposition}\label{effbasis} Given a Borel action of $\mathbb{R}^{<\omega}$ on a standard Borel space $X$, there are invariant Borel maps which provide, for each $x\in X$, $$\alpha(x),\beta(x),\gamma(x)\in\omega+1$$ and $$\{\vec{u_i}(x)\}_{i\in\alpha(x)}, \{\vec{v_i}(x)\}_{i\in\beta(x)}, \{\vec{w_i}(x)\}_{i\in\gamma(x)}\in\bigcup_{j\leq\omega}(\mathbb{R}^{<\omega})^j$$ such that $$\mathbb{R}^{<\omega}=(\bigoplus_{i\in\alpha(x)}\!\!\mathbb{R}\!\vec{u_i})\oplus(\bigoplus_{i\in\beta(x)}\!\!\mathbb{R}\!\vec{v_i})\oplus(\bigoplus_{i\in\gamma(x)}\!\!\mathbb{R}\!\vec{w_i})$$ and $$G_{x}=(\bigoplus_{i\in\alpha(x)}\!\!\mathbb{R}\!\vec{u_i})\oplus(\bigoplus_{i\in\beta(x)}\!\!\mathbb{Z}\!\vec{v_i}).\\$$
\end{proposition}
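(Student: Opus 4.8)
The plan is simply to assemble the three preceding lemmas, which were arranged so that they can be run in sequence: the construction of Lemma \ref{vbasis} takes the output of Lemma \ref{ubasis} as input, and the construction of Lemma \ref{wbasis} takes the outputs of both as input. So I would first fix, for each $x$, the basis $\{\vec{u_i}(x)\}_{i\in\alpha(x)}$ of $U_x$ produced by Lemma \ref{ubasis}; then the sequence $\{\vec{v_i}(x)\}_{i\in\beta(x)}$ produced by Lemma \ref{vbasis} relative to those $\vec{u_i}(x)$; and finally the sequence $\{\vec{w_i}(x)\}_{i\in\gamma(x)}$ produced by Lemma \ref{wbasis} relative to both. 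By the three lemmas, each of $\alpha,\beta,\gamma$ is then an invariant Borel map into $\omega+1$, and each of the three basis assignments is an invariant Borel map into $\bigcup_{j\leq\omega}(\mathbb{R}^{<\omega})^j$.

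It then remains to verify the two displayed decompositions. For the equation for $G_x$, there is nothing to prove: $G_x=(\bigoplus_{i\in\alpha(x)}\mathbb{R}\vec{u_i})\oplus(\bigoplus_{i\in\beta(x)}\mathbb{Z}\vec{v_i})$ is exactly the final conclusion of Lemma \ref{vbasis}. For the decomposition of $\mathbb{R}^{<\omega}$, I would first observe that $\spn_{\mathbb{R}}(G_x)$ splits as $(\bigoplus_{i\in\alpha(x)}\mathbb{R}\vec{u_i})\oplus(\bigoplus_{i\in\beta(x)}\mathbb{R}\vec{v_i})$: taking $\mathbb{R}$-spans in $G_x=U_x\oplus\bigoplus_{i\in\beta(x)}\mathbb{Z}\vec{v_i}(x)$ gives $\spn_{\mathbb{R}}(G_x)=U_x+\spn_{\mathbb{R}}\{\vec{v_i}(x)\}_{i\in\beta(x)}$, and this sum is internal direct because the $\vec{v_i}(x)$ lie in the subspace $V_x$ complementary to $U_x$ constructed in the proof of Lemma \ref{vbasis} (so $\spn_{\mathbb{R}}\{\vec{v_i}(x)\}\cap U_x\subseteq V_x\cap U_x=\{\vec{0}\}$) and are themselves $\mathbb{R}$-linearly independent. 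Substituting this into the decomposition $\mathbb{R}^{<\omega}=\spn_{\mathbb{R}}(G_x)\oplus\bigoplus_{i\in\gamma(x)}\mathbb{R}\vec{w_i}(x)$ from Lemma \ref{wbasis} yields the claimed $\mathbb{R}^{<\omega}=(\bigoplus_{i\in\alpha(x)}\mathbb{R}\vec{u_i})\oplus(\bigoplus_{i\in\beta(x)}\mathbb{R}\vec{v_i})\oplus(\bigoplus_{i\in\gamma(x)}\mathbb{R}\vec{w_i})$, and in particular the union $\{\vec{u_i}(x)\}\cup\{\vec{v_i}(x)\}\cup\{\vec{w_i}(x)\}$ is an $\mathbb{R}$-basis of $\mathbb{R}^{<\omega}$.

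I do not expect any genuine obstacle here; the substantive work is already done inside the three lemmas and what remains is bookkeeping. The only point deserving care is that the $\mathbb{R}$-spans of the $u$'s, the $v$'s, and the $w$'s really do sit in internal direct sum, but this is immediate once one keeps track of the ambient complement: $U_x\cap V_x=\{\vec{0}\}$ separates the $u$'s from the $v$'s, the $\mathbb{R}$-independence of the $v$'s is built into Lemma \ref{vbasis}, and the $\vec{w_i}(x)$ are coordinate vectors chosen one at a time outside $\spn_{\mathbb{R}}(G_x)$ together with the span of the previously chosen $w$'s, so they are independent of everything already present.
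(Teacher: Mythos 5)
Your proposal is correct and is essentially the same approach the paper takes: the paper simply states the proposition as an immediate consequence of Lemmas \ref{ubasis}, \ref{vbasis}, and \ref{wbasis}, and your argument just makes explicit the bookkeeping — that $\spn_{\mathbb{R}}(G_x)$ decomposes as $(\bigoplus_{i\in\alpha(x)}\mathbb{R}\vec{u_i})\oplus(\bigoplus_{i\in\beta(x)}\mathbb{R}\vec{v_i})$ via $U_x\cap V_x=\{\vec 0\}$, and then substituting into the complement from Lemma \ref{wbasis} gives the full direct-sum decomposition of $\mathbb{R}^{<\omega}$.
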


\begin{remark} It follows immediately from Proposition \ref{effbasis} that $$\mathbb{R}^{<\omega}/G_{x}\cong(\bigoplus_{i\in\beta(x)}\!\!\mathbb{T}\!\vec{v_i})\oplus(\bigoplus_{i\in\gamma(x)}\!\mathbb{R}\!\vec{w_i})\subset\mathbb{T}^{<\omega}\times\mathbb{R}^{<\omega}$$ where each $\vec{h}\in(\bigoplus_{i\in\beta(x)}\!\!\mathbb{T}\!\vec{v_i})\oplus(\bigoplus_{i\in\gamma(x)}\!\mathbb{R}\!\vec{w_i})$ represents the coset $\vec{h}+\,G_x$ and the operation on each $\mathbb{T}\!\vec{v_i}$ is given by the usual $\mathbb{T}$-operation $t\vec{v_i}+s\vec{v_i}=(t+s-\lfloor t+s\rfloor)\!\vec{v_i}$.\end{remark}

So we get the following theorem.
\begin{theorem}\label{reductiontofree}An equivalence relation induced by a Borel action $(g,x)\mapsto g\cdot x$ of $\mathbb{R}^{<\omega}$ on some standard Borel space $X$ is a countable disjoint union of equivalence relations each of which is induced by a free action of a countable sum of copies of $\mathbb{R}$ and $\mathbb{T}$.
\end{theorem}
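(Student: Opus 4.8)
The plan is to use the machinery of Proposition~\ref{effbasis} to stratify $X$ by the isomorphism type of $\mathbb{R}^{<\omega}/G_x$ and then build a free action of that quotient on each stratum. First I would observe that the maps $x\mapsto\alpha(x)$, $x\mapsto\beta(x)$, $x\mapsto\gamma(x)$ from Proposition~\ref{effbasis} are invariant and Borel, so for each pair $(\beta,\gamma)\in(\omega+1)^2$ the set $X_{\beta,\gamma}=\{x\in X:\beta(x)=\beta,\ \gamma(x)=\gamma\}$ is an invariant Borel subset of $X$, and these sets partition $X$ into at most countably many pieces. (Note that $\alpha(x)$ is irrelevant to the orbit structure, since $\spn_{\mathbb{R}}\{\vec{u_i}(x)\}\subseteq G_x$ already acts trivially modulo the rest of $G_x$; what matters for the quotient is only $\beta$ and $\gamma$.) It then suffices to show that $E^X_{\mathbb{R}^{<\omega}}\restriction X_{\beta,\gamma}$ is induced by a free Borel action of the fixed group $H_{\beta,\gamma}:=(\bigoplus_{\beta}\mathbb{T})\oplus(\bigoplus_{\gamma}\mathbb{R})$, since each such $H_{\beta,\gamma}$ is a countable sum of copies of $\mathbb{R}$ and $\mathbb{T}$, and $E$ is the disjoint union over the (countably many) pairs $(\beta,\gamma)$.

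Next, fix $(\beta,\gamma)$ and work on $Y:=X_{\beta,\gamma}$. The Remark following Proposition~\ref{effbasis} identifies $\mathbb{R}^{<\omega}/G_x$ with the concrete set $(\bigoplus_{i\in\beta}\mathbb{T}\vec{v_i}(x))\oplus(\bigoplus_{i\in\gamma}\mathbb{R}\vec{w_i}(x))$, and I would use the Borel basis functions $x\mapsto\{\vec{v_i}(x)\}$, $x\mapsto\{\vec{w_i}(x)\}$ to transport this to the fixed group $H_{\beta,\gamma}$: given $\vec{t}=(t_i)_{i\in\beta}\in\bigoplus_\beta\mathbb{T}$ and $\vec{s}=(s_i)_{i\in\gamma}\in\bigoplus_\gamma\mathbb{R}$, map the group element $(\vec{t},\vec{s})$ to the vector $\sum_{i\in\beta}t_i\vec{v_i}(x)+\sum_{i\in\gamma}s_i\vec{w_i}(x)\in\mathbb{R}^{<\omega}$, which is a transversal-representative for a coset of $G_x$. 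Then I would define the action of $H_{\beta,\gamma}$ on $Y$ by $(\vec{t},\vec{s})\cdot_x y:=\big(\sum_{i\in\beta}t_i\vec{v_i}(x)+\sum_{i\in\gamma}s_i\vec{w_i}(x)\big)\cdot y$ — but one must be careful, because the basis functions are evaluated at the moving point. Since $\vec{v_i}$, $\vec{w_i}$, $\alpha$, $\beta$, $\gamma$ are all \emph{invariant}, for any $y'$ in the same orbit as $y$ we have $\vec{v_i}(y')=\vec{v_i}(y)$ and $\vec{w_i}(y')=\vec{w_i}(y)$, and also $G_{y'}=G_y$; this invariance is exactly what makes the definition well-behaved. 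The key computation is then to check that this is a genuine action: $\big((\vec{t},\vec{s})+(\vec{t}',\vec{s}')\big)\cdot_x\big((\vec{t}',\vec{s}')\cdot_x y\big)$ should equal $\big((\vec{t},\vec{s})+(\vec{t}',\vec{s}')\big)\cdot_x y$, which follows from the commutativity of $\mathbb{R}^{<\omega}$ together with the fact that differences of transversal representatives landing in the same coset lie in $G_x=G_y$ and hence act trivially. (Here, as in the proof of Theorem~\ref{reductiontoR}, some floor-function bookkeeping is needed to pass between a representative $t_i\vec{v_i}(x)$ with $t_i\in[0,1)$ and an arbitrary real multiple, using that integer multiples of $\vec{v_i}(x)$ lie in $G_x$.)

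Having defined the action, I would verify three things. \emph{Borelness}: the action map $(\vec{t},\vec{s},y)\mapsto(\vec{t},\vec{s})\cdot_x y$ is Borel because $y\mapsto\vec{v_i}(y)$, $y\mapsto\vec{w_i}(y)$ are Borel (Proposition~\ref{effbasis}), the original action $(g,y)\mapsto g\cdot y$ is Borel, and only finitely many coordinates of $\vec{t},\vec{s}$ are nonzero at a time — though one should restrict attention to the Borel pieces where $\beta$ and $\gamma$ are the fixed values so that the indexing is uniform. \emph{Same orbits}: every element of $\mathbb{R}^{<\omega}$ is congruent mod $G_y$ to exactly one transversal representative of the above form (by the direct-sum decomposition in Proposition~\ref{effbasis}), so the $H_{\beta,\gamma}$-orbit of $y$ equals the $\mathbb{R}^{<\omega}$-orbit of $y$ within $Y$. \emph{Freeness}: if $(\vec{t},\vec{s})\cdot_x y=y$ then the corresponding vector stabilizes $y$, i.e.\ lies in $G_y$, but it is a transversal representative for a coset of $G_y$, so the coset is trivial; by the direct-sum decomposition and the $\mathbb{R}$-linear independence of $\{\vec{v_i}(y)\}\cup\{\vec{w_i}(y)\}$ this forces all $t_i$ and $s_i$ to be $0$ (using for the $\mathbb{T}$-coordinates that $t_i\in[0,1)$ and $t_i\vec{v_i}(y)\in G_y=\spn_{\mathbb{Z}}\{\vec{v_i}(y)\}\oplus\cdots$ only when $t_i=0$). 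The main obstacle I anticipate is not any single step but the coherence of the whole package: making sure the floor-function normalization used to represent $\mathbb{T}$-coordinates interacts correctly with the action axiom (much as in Theorem~\ref{reductiontoR}), and making sure that evaluating the invariant basis functions at the moving point causes no trouble — this is precisely where the invariance of $\alpha,\beta,\gamma,\vec{u_i},\vec{v_i},\vec{w_i}$ established in Lemmas~\ref{ubasis}--\ref{wbasis} is essential, so I would emphasize that point carefully.
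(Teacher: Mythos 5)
Your proposal is correct and matches the paper's proof essentially exactly: you stratify $X$ into invariant Borel pieces $X_{\beta,\gamma}$ by the quotient type $\mathbb{R}^{<\omega}/G_x$ and define the free action of $(\bigoplus_{\beta}\mathbb{T})\oplus(\bigoplus_{\gamma}\mathbb{R})$ on each piece via the Borel basis functions $\vec{v_i}(\cdot),\vec{w_i}(\cdot)$ of Proposition~\ref{effbasis}, with the same verification that it reproduces the $\mathbb{R}^{<\omega}$-orbits and is free. One small slip: your stated action axiom should read $(\vec{t},\vec{s})\cdot_x\bigl((\vec{t}',\vec{s}')\cdot_x y\bigr)=\bigl((\vec{t},\vec{s})+(\vec{t}',\vec{s}')\bigr)\cdot_x y$, with $(\vec{t},\vec{s})$ rather than the sum on the outer left --- your subsequent explanation (differences land in $G_y$ and act trivially, plus the floor-function bookkeeping on the $\mathbb{T}$-coordinates) is consistent with this corrected form.
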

\begin{proof} By Proposition \ref{effbasis} and the Remark, if we let $X_{\beta,\gamma}=\{x\in X:\beta(x)=\beta\,\,\&\,\,\gamma(x)=\gamma\}$ for each $\beta, \gamma\in\omega+1$, then the $X_{\beta,\gamma}$'s are disjoint invariant Borel sets where $x\in X_{\beta,\gamma}\Leftrightarrow \mathbb{R}^{<\omega}/G_{x}\cong(\bigoplus_{i\in\beta}\!\!\mathbb{T})\oplus(\bigoplus_{i\in\gamma}\!\mathbb{R})$. We may then define a free action $\cdot_{\beta,\gamma}$ of $(\bigoplus_{i\in\beta}\!\!\mathbb{T})\oplus(\bigoplus_{i\in\gamma}\!\mathbb{R})$ on $X_{\beta,\gamma}$ by $$(\vec{t},\vec{r})\cdot_{\beta,\gamma}x=\left(\sum_{i\in\beta}t_{i}\!\vec{v_i}(x)+\sum_{i\in\gamma}r_{i}\!\vec{w_i}(x)\right)\cdot x,$$ where if we let $E_{\beta,\gamma}=E^X_{\mathbb{R}^{<\omega}}\upharpoonright X_{\beta,\gamma}$, we see that for $x,y\in X_{\beta,\gamma}$, we have
\begin{align*}x E_{\beta,\gamma}y&\Leftrightarrow\exists \vec{g}\in\mathbb{R}^{<\omega}\,\, \text{such that}\,\, y=\vec{g}\cdot x\\
&\Leftrightarrow\exists\vec{h}\in(\bigoplus_{i\in\beta(x)}\!\!\mathbb{T}\!\vec{v_i})\oplus(\bigoplus_{i\in\gamma(x)}\!\mathbb{R}\!\vec{w_i})\,\, \text{such that}\,\, y\in(\vec{h}+G_x)\cdot x\\
&\Leftrightarrow\exists(\vec{t},\vec{r})\!\in\!(\bigoplus_{i\in\beta}\mathbb{T})\oplus(\bigoplus_{i\in\gamma}\mathbb{R})\, \text{such that}\, y=\!\left(\sum_{i\in\beta}t_{i}\!\vec{v_i}(x)+\sum_{i\in\gamma}r_{i}\!\vec{w_i}(x)\right)\!\cdot x
\end{align*}
So each $E_{\beta,\gamma}$ is induced by the free action $\cdot_{\beta,\gamma}$ of $(\bigoplus_{i\in\beta}\!\!\mathbb{T})\oplus(\bigoplus_{i\in\gamma}\!\mathbb{R})$, and $E^X_{\mathbb{R}^{<\omega}}=\bigsqcup_{\beta,\gamma}E_{\beta,\gamma}$.
\end{proof}

\section{Free actions of countable sums of $\mathbb{R}$ and $\mathbb{T}$}\label{freesection}
\subsection{Marker Sets for Locally Compact Group Actions}
We have seen from Theorem \ref{lacunary} that we can construct a Borel set which hits every orbit of an acting locally compact Polish group, and we can do so in a way that guarantees the points in the set which belong to the same orbit are ``spread out'' with respect to the acting group. However, this is not quite enough to give us the control we desire when trying to make local moves in our constructions. We want to be able to describe what goes on around a point using the nearest points of the spread out subset for reference. But we do not yet have a spread out subset for which we can guarantee one does not have to look too far in order to find a reference point. So now our goal is to show that we can extend Kechris' countable $K$-discrete sections into $K$-discrete sections with the added property that any point in the orbit is $K$-close to a point in the section.

While adding points to the section, we may along the way accidentally add points that are too close together. So then we want to be able to choose which points remain in the set and which ones to throw out in order to maintain the section's $K$-discreteness. To do this, we will make use of the following lemma from \cite{jackson_kechris_louveau}.

\begin{lemma}\label{maxdiscrete}
Let $F$ be a locally finite symmetric reflexive Borel relation on $X$. Then there exists a Borel maximal $F$-discrete subset of $X$.
\end{lemma}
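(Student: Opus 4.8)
The statement to prove is Lemma~\ref{maxdiscrete}: given a locally finite symmetric reflexive Borel relation $F$ on a standard Borel space $X$, there exists a Borel maximal $F$-discrete subset of $X$. (Here ``$F$-discrete'' means a set $D$ such that no two distinct points of $D$ are $F$-related, and ``maximal'' means no point of $X \setminus D$ can be added while preserving discreteness, i.e. every $x \in X$ is $F$-related to some element of $D$.)

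\medskip

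\textbf{Plan of proof.} The standard approach is a greedy transfinite-free construction carried out along a Borel coloring of the ``graph'' $F$. First I would use the Luzin--Novikov theorem (or the fact that a locally finite Borel relation has a Borel $\mathbb{N}$-coloring of its associated graph $G_F$, where $xG_Fy \iff xFy \wedge x \neq y$): since $F$ is locally finite, the graph $G_F$ has locally finite degree, and one checks via Luzin--Novikov uniformization that there is a Borel function $c : X \to \mathbb{N}$ that is a proper vertex coloring of $G_F$, meaning $xG_Fy \Rightarrow c(x) \neq c(y)$. Concretely, one decomposes $F \setminus \Delta$ into countably many Borel graphs of partial functions and builds the coloring level by level, at each stage assigning each point the least color not used by any of its finitely many already-colored neighbors. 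Then I would build $D$ in $\omega$ stages along the color classes: let $D_0 = c^{-1}(0)$, and recursively let $D_{n+1} = D_n \cup \{x \in c^{-1}(n+1) : \neg\exists y \in D_n\ (xFy \wedge x \neq y)\}$, i.e. add a point of color $n+1$ exactly when it is not $F$-related to anything already chosen. Finally set $D = \bigcup_n D_n$. Each $D_n$ is Borel (the defining condition involves only finitely many neighbors, whose membership in the Borel set $D_n$ is a Borel condition — this is where local finiteness is used to keep everything Borel rather than merely analytic), so $D$ is Borel.

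\medskip

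It remains to verify $D$ is $F$-discrete and maximal. For discreteness: if $x \neq y$ are both in $D$ with $xFy$, then $c(x) \neq c(y)$ since $c$ is a proper coloring; say $c(x) < c(y) = n+1$. Then $x \in D_{c(x)} \subseteq D_n$, so when we considered $y$ at stage $n+1$ we would have found $y \in D_n$-related-to $x$ and excluded $y$, contradiction. For maximality: suppose $x \notin D$; then in particular $x \notin D_{c(x)}$. If $c(x) = 0$ this is impossible since $D_0 = c^{-1}(0)$, so $c(x) = n+1$ for some $n$, and $x$ failed the stage-$(n+1)$ test, meaning there is $y \in D_n \subseteq D$ with $xFy$ and $x \neq y$. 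Hence every $x \notin D$ is $F$-related to some element of $D$, so $D$ is a maximal $F$-discrete set.

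\medskip

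\textbf{Main obstacle.} The genuinely delicate point is producing the Borel coloring $c$ of the graph $G_F$ from the hypothesis of local finiteness plus Borelness; this is the step that really requires the Luzin--Novikov uniformization theorem (to write $F \setminus \Delta$ as a countable union of Borel graphs of partial Borel functions) and a careful induction to ensure each color class stays Borel. Everything after that — the greedy construction of $D$ and the verification of discreteness and maximality — is routine bookkeeping, with the only subtlety being to notice that at each stage the condition ``$x$ has no $D_n$-neighbor'' is Borel precisely because local finiteness lets us quantify over the finitely many $F$-neighbors of $x$, each of which is picked out by one of the Borel partial functions from the Luzin--Novikov decomposition. (Alternatively, one can cite the standard fact that Borel graphs of bounded or locally finite degree admit Borel $\mathbb{N}$-colorings and proceed directly to the greedy step.)
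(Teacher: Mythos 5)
The paper does not prove this lemma; it cites it from Jackson--Kechris--Louveau, so there is no internal argument to compare against. Your proof is a valid, self-contained argument: reducing to the existence of a Borel proper $\mathbb{N}$-coloring $c$ of the graph $G_F = F \setminus \Delta$, then taking $D_0 = c^{-1}(0)$ and $D_{n+1} = D_n \cup \{x \in c^{-1}(n+1) : \neg\exists y\,(y \in D_n \wedge xFy \wedge x \neq y)\}$, with the Luzin--Novikov decomposition of $F\setminus\Delta$ into graphs of countably many Borel partial functions used to keep each step Borel (the condition ``no $D_n$-neighbor'' becomes a countable conjunction of Borel conditions precisely because the neighbors are enumerated by those partial functions). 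Your verification that $D=\bigcup_n D_n$ is $F$-discrete and maximal is correct, including the use of reflexivity to handle $x\in D$ in the maximality check.

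The one imprecise step is your sketch of how to produce the coloring. ``Builds the coloring level by level, at each stage assigning each point the least color not used by any of its finitely many already-colored neighbors'' is not yet an algorithm: it does not say what a ``level'' is, and a naive Borel greedy coloring fails because two adjacent, as-yet-uncolored vertices assigned at the same stage would collide. The clean route is the one you name as a fallback: let $X_d = \{x : \deg_{G_F}(x) \le d\}$ (Borel via the Luzin--Novikov decomposition), apply the Kechris--Solecki--Todorcevic theorem that a Borel graph of maximum degree at most $d$ has a Borel proper $(d+1)$-coloring to each $G_F \upharpoonright X_d$ to obtain $c_d$, and set $c(x) = \langle \deg_{G_F}(x),\, c_{\deg_{G_F}(x)}(x)\rangle$, which is a Borel proper $\mathbb{N}$-coloring of $G_F$. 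Replacing the informal ``level by level'' sketch with this (or simply with the KST citation) makes the proof complete.
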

Here, \emph{locally finite} means that $\forall x\in X$, $F(x)=\{y\in X:yFx\}$ is finite, and we say that $Y\subseteq X$ is \emph{maximal} \emph{F-discrete} if we have both $\forall x, y\in Y (x\neq y\implies\neg xFy)$ and $\forall x\in X \exists y\in Y (xFy)$. 

When applying Lemma \ref{maxdiscrete} in our argument, the relation $F$ that we are interested in is the $K$-relation, $xFy\iff y\in K\cdot x$, where $K$ is a compact symmetric neighborhood of the identity in the acting group. And to apply Lemma \ref{maxdiscrete}, we need to make sure each time that the $K$-relation is locally finite on its domain. The following lemma will make this much easier to manage.

\begin{lemma}\label{finite}
Suppose we have an action of a Hausdorff group $G$ on a set $X$. Let $K$ be a symmetric neighborhood of the identity in $G$, let $Y\subseteq X$ be $K$-discrete (i.e., if $x, y\in Y$ are distinct, then $y\nin K \cdot x$), and let $C\subseteq G$ be compact. Then for any $x\in X$, the set $Y\cap (C\cdot x)$ is finite.
\end{lemma}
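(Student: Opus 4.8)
The plan is to argue by contradiction using compactness. Suppose $Y \cap (C \cdot x)$ is infinite for some $x \in X$, and pick a sequence of distinct points $y_1, y_2, \ldots \in Y \cap (C \cdot x)$. For each $n$ choose $c_n \in C$ with $y_n = c_n \cdot x$; since $C$ is compact (and we are in a Hausdorff, hence sequentially reasonable, setting — here I should be slightly careful about whether $G$ is metrizable, but in all our applications $G$ is a countable sum of $\mathbb{R}$'s and $\mathbb{T}$'s, and more generally one can run the net version of the argument), pass to a subsequence so that $c_{n_k} \to c$ for some $c \in C$. The idea is that the $c_{n_k}$ eventually cluster together tightly enough that consecutive ones differ by an element of $K$, which will force the corresponding $y$'s to be $K$-related, contradicting $K$-discreteness of $Y$.

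To make this precise, first I would use continuity of the group operations to find a symmetric open neighborhood $V$ of the identity with $V \cdot V \subseteq K$ (shrink $V$ if necessary; $K$ contains an open neighborhood of $e$ since it is a neighborhood of the identity, and symmetry of $K$ lets us symmetrize $V$ by replacing it with $V \cap V^{-1}$). Then since $c_{n_k} \to c$, for all large $k$ we have $c_{n_k} \in cV$, so $c_{n_k}^{-1} c_{n_{k'}} \in V^{-1} V \subseteq K$ for all large $k, k'$. Fix two such indices $k \neq k'$ with $y_{n_k} \neq y_{n_{k'}}$ (possible since the $y_n$ are distinct). Then $y_{n_{k'}} = c_{n_{k'}} \cdot x = (c_{n_{k'}} c_{n_k}^{-1}) \cdot (c_{n_k} \cdot x) = (c_{n_{k'}} c_{n_k}^{-1}) \cdot y_{n_k}$ with $c_{n_{k'}} c_{n_k}^{-1} \in K$, so $y_{n_{k'}} \in K \cdot y_{n_k}$, contradicting the assumption that $Y$ is $K$-discrete.

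The main obstacle is purely the topological bookkeeping: ensuring that "$C$ compact $\Rightarrow$ every sequence has a convergent subsequence" is legitimate in whatever generality we need. If $G$ is only assumed Hausdorff this requires the net formulation (take a subnet of $(c_n)$ converging to $c \in C$, then the same neighborhood argument shows cofinally many pairs $c_\lambda, c_{\lambda'}$ satisfy $c_{\lambda'} c_\lambda^{-1} \in K$, and among the corresponding $y_\lambda$ infinitely many are distinct since $Y \cap (C\cdot x)$ was assumed infinite), but the algebra is identical. In our intended applications $G$ is metrizable so the sequential version suffices and no subtlety arises. Everything else — producing $V$ with $V^{-1}V \subseteq K$, and the one-line computation identifying $y_{n_{k'}}$ as a $K$-translate of $y_{n_k}$ — is routine.
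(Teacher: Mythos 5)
Your approach is essentially the same as the paper's: use compactness of $C$ to extract a cluster point of the group elements carrying $x$ into $Y$, pick a symmetric neighborhood $V$ of the identity with $VV \subseteq K$, and get two of those elements close enough to the cluster point that the corresponding points of $Y$ are $K$-related, contradicting $K$-discreteness. Two remarks.

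First, the paper sidesteps the sequences-vs-nets bookkeeping you flag by simply invoking that an infinite subset of a compact space has a limit (cluster) point $\hat c$, and then choosing two distinct elements of the infinite set lying in the neighborhood $\widehat{K}\hat c$. No metrizability, sequential compactness, or subnet is needed; limit point compactness is all that is used, and it holds for every compact space. This is worth adopting since it covers the stated generality in one line.

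Second, there is a sidedness error in your computation that is harmless for abelian $G$ but matters in the stated generality of an arbitrary Hausdorff group. From $c_{n_k}, c_{n_{k'}} \in cV$ you correctly deduce $c_{n_k}^{-1} c_{n_{k'}} \in V^{-1}V \subseteq K$, but the identity $y_{n_{k'}} = (c_{n_{k'}} c_{n_k}^{-1}) \cdot y_{n_k}$ that you then apply needs $c_{n_{k'}} c_{n_k}^{-1} \in K$, which is the conjugate $c_{n_{k'}}\bigl(c_{n_k}^{-1} c_{n_{k'}}\bigr)c_{n_{k'}}^{-1}$ of what you actually established, and this need not lie in $K$ when $G$ is non-abelian. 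The fix is to use the left translate $Vc$ rather than $cV$: writing $c_{n_k} = v_1 c$ and $c_{n_{k'}} = v_2 c$ with $v_1, v_2 \in V$ gives $c_{n_{k'}} c_{n_k}^{-1} = v_2 v_1^{-1} \in VV^{-1} = V^2 \subseteq K$ directly. The paper's $\widehat{K}\hat c$ is exactly this left translate, which is why its algebra goes through without comment.
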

\begin{proof}
Suppose for contradiction that $Y\cap (C\cdot x)$ is infinite. Then there exists an infinite subset $A\subseteq C$ where $A\cdot x\subseteq Y$ and if $g, h\in A$ are distinct then $ g\cdot x\neq h\cdot x$. But then since $A\subseteq C$ is infinite and $C$ is compact, $A$ must cluster at some $\hat{c}\in C$. Letting $\widehat{K}$ be a symmetric neighborhood of the identity such that $\widehat{K}^2\subseteq K$, and noting that the group is Hausdorff, it follows that we can choose distinct $a_1, a_2\in A\cap \widehat{K}\hat{c}$. But then $a_1\in\widehat{K}^{2}a_2\subseteq Ka_2$. So we would have $a_1\cdot x\in K\cdot(a_2\cdot x)$ where $a_1\cdot x$ and $a_2\cdot x$ are distinct members of $Y$, contradicting $Y$'s $K$-discreteness.
\end{proof}

Now, to extend Kechris' $K$-discrete sections into maximal ones, we will make a somewhat similar argument to one given by Slutsky in \cite{slutsky}. However, Slutsky's definition of $K$-lacunary is what we would call $K^2$-discrete, and the alternative proof provided here seems to be of value.

\begin{theorem}\label{Gmarker}
Let $G$ be a locally compact Polish group acting in a Borel manner on a standard Borel space $X$, and let $K$ be a compact symmetric neighborhood of the identity in $G$. Then there is a Borel set $M\subseteq X$ such that 
\begin{itemize}
\item[(i)] if $x, y\in M$ are distinct, then $y\nin K\cdot x$,
\item[(ii)] $K\cdot M=X$.
\end{itemize}\end{theorem}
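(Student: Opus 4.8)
The plan is to combine Kechris' countable $K$-discrete section (Theorem \ref{lacunary}) with a "saturation" step and then a thinning-out step using Lemma \ref{maxdiscrete}. First, by Theorem \ref{continuousaction} I may assume the action is continuous, so that Theorem \ref{lacunary} applies; I obtain a Borel $K$-discrete set $Y\subseteq X$ meeting every orbit. The set $Y$ by itself satisfies (i) but need not satisfy (ii): a point $x$ may lie outside $K\cdot Y$, indeed outside $K^n\cdot Y$ for every $n$. So I will enlarge $Y$ in countably many stages. Fix a symmetric neighborhood $\widehat K$ of the identity with $\widehat K^2\subseteq K$ (using that $G$ is Hausdorff and locally compact, $\widehat K$ can be taken compact). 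At stage $n$, having built a Borel $\widehat K$-discrete set $Y_n\supseteq Y$, I look at the Borel set $Z_n=X\setminus(\widehat K\cdot Y_n)$ of points not yet $\widehat K$-covered; on $Z_n$ I want to add new marker points. But I cannot simply add all of $Z_n$, so I take a maximal $\widehat K$-discrete subset of the Borel set $Y_n\cup Z_n$ (equivalently, extend $Y_n$ to a maximal $\widehat K$-discrete subset of $X$ restricted to the relevant domain). To invoke Lemma \ref{maxdiscrete} I must check the $\widehat K$-relation $x\,F\,y\iff y\in\widehat K\cdot x$ is locally finite on the domain in question; this is exactly what Lemma \ref{finite} gives, since any relevant set is $\widehat K$-discrete (or contained in one after the extension) and $\widehat K$ is compact.

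The cleaner route, which I would actually carry out, is: extend $Y$ directly to a single Borel set $M$ that is maximal $K$-discrete in all of $X$, i.e. $M\supseteq Y$, $M$ is $K$-discrete, and no point of $X$ can be added to $M$ keeping it $K$-discrete. To get such an $M$ from Lemma \ref{maxdiscrete} I work with the $\widehat K$-relation rather than the $K$-relation: let $F$ be $x\,F\,y\iff y\in K\cdot x$. This $F$ is symmetric (since $K$ is symmetric) and reflexive (since $e\in K$), and it is Borel because the action is Borel. The obstacle is that $F$ need not be locally finite on $X$: an orbit is generally uncountable, and $K\cdot x$ can meet an orbit in uncountably many points. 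The fix is the standard one — restrict attention to $Y$ first. Concretely, using the countable $K^3$-discrete section $Y_0$ from Theorem \ref{lacunary} (apply it with the compact symmetric neighborhood $K^3$ in place of $K$), the set $Y_0$ meets every orbit and is $K^3$-discrete; on $Y_0$, Lemma \ref{finite} shows the $K$-relation is locally finite, so Lemma \ref{maxdiscrete} produces a Borel maximal $K$-discrete $M\subseteq Y_0$. Then (i) holds by construction. For (ii): given $x\in X$, pick $y\in Y_0$ in the orbit of $x$, say $x=g\cdot y$; chop $g$ into finitely many pieces each in $K$... this is where I must be careful, because maximality of $M$ inside $Y_0$ only controls points of $Y_0$, not of $X$.

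So the genuinely delicate point — the main obstacle — is arranging that maximality of $M$ as a $K$-discrete subset of the \emph{section} upgrades to the covering property $K\cdot M=X$ on the \emph{whole space}. This is why one wants the section to be $K^n$-discrete for a suitable $n$ and then build $M$ maximal $K$-discrete within it: if $x\in X$ is not in $K\cdot M$, one finds (walking along the orbit from $x$ to a point of the section, in steps lying in $K$, and using $K$-discreteness of the section to keep the walk inside it and Lemma \ref{finite} to keep things finite) a point of the section at $K$-distance from $x$ that is itself at $K$-distance from no point of $M$, and adding it would contradict maximality. I would set this up as a short combinatorial lemma: if $Y_0$ is $K^2$-discrete, meets every orbit, and $M\subseteq Y_0$ is maximal $K$-discrete in $Y_0$, then every $x\in X$ lies within $K$ of some point of $Y_0$ that lies within $K$ of a point of $M$, hence $x\in K^2\cdot M$; replacing $K$ by a compact symmetric $K'$ with $K'^{\,2}\subseteq$ the given neighborhood at the outset then yields exactly statement (ii) for the originally given $K$. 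The Borelness of every set constructed — $Y_0$ (Theorem \ref{lacunary}), the $K$-relation (Borel action), and $M$ (Lemma \ref{maxdiscrete}) — is immediate, and the selection theorem (Theorem \ref{selection}) is available if one needs to choose orbit representatives definably. I expect the bookkeeping with the exponents on $K$ to be the only real content; everything else is assembling the quoted lemmas.
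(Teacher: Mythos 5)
There is a genuine gap, and it is precisely at the step you yourself flag as ``the genuinely delicate point.'' The combinatorial lemma you propose --- ``if $Y_0$ is $K^2$-discrete, meets every orbit, and $M\subseteq Y_0$ is maximal $K$-discrete in $Y_0$, then every $x\in X$ lies within $K$ of some point of $Y_0$'' --- is false. Theorem \ref{lacunary} gives a section that hits each orbit at least once (and at most countably often) and is $K$-discrete, but it carries no syndeticity: there is no guarantee that $X = K^n\cdot Y_0$ for any $n$. A section consisting of a single point per orbit satisfies all the conclusions of Theorem \ref{lacunary}, yet for such a $Y_0$ the set $K\cdot Y_0$ would be a tiny piece of each (uncountable) orbit. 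So maximality of $M$ inside $Y_0$ only yields $Y_0\subseteq K\cdot M$, which says nothing about points of $X\setminus Y_0$. Your suggestion of ``walking along the orbit from $x$ to a point of the section in $K$-steps'' does not repair this: the intermediate points of such a walk need not lie anywhere near $Y_0$, so the $K$-discreteness of $Y_0$ gives no purchase on them, and in any case $g\in G$ with $x = g\cdot y_0$ need not lie in the subgroup generated by $K$ at all. Your alternative first route (take a maximal $\widehat K$-discrete subset of $Y_n\cup Z_n$ with $Z_n = X\setminus(\widehat K\cdot Y_n)$) runs into the other obstacle you already noticed: $Z_n$ has uncountably many points per orbit, so the $\widehat K$-relation is not locally finite on $Y_n\cup Z_n$, and Lemma \ref{finite} does not apply --- it only gives local finiteness of the $K$-relation on a set that is \emph{already} $\widehat K$-discrete, not on arbitrary Borel sets.

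What the paper does instead is enlarge $Y$ not by adjoining raw points of $X$ but by adjoining \emph{translates} of $Y$: fixing a countable dense $\{d_n\}\subseteq G$, at stage $n$ one adds to $Y_n$ the set $(d_n\cdot Y)\setminus(K\cdot Y_n)$ and then takes a maximal $K$-discrete refinement $Y_{n+1}$ via Lemma \ref{maxdiscrete}. Local finiteness is preserved because each $d_n\cdot Y$ is itself $K$-discrete (being a translate of $Y$), so Lemma \ref{finite} applies. The covering property for $M = \bigcup_n Y_n$ is then proved by contradiction: if $x\nin K\cdot M$, then $M\cap(K^2\cdot x)$ is finite (Lemma \ref{finite} again), hence $\bigcup_{y\in M\cap(K^2\cdot x)}K\cdot y$ is closed and misses $x$; choosing an open symmetric $O\subseteq K$ with $O\cdot x$ inside the complement and writing $x = g\cdot y_0$ with $y_0\in Y$, density of $\{d_n\}$ produces $d_l\in Og$ so that $d_l\cdot y_0\in O\cdot x$, and this point must be within $K$ of $Y_{l+1}\subseteq M$, a contradiction. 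The density of $\{d_n\}$ in $G$ is exactly the ingredient your proposal is missing.
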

\begin{proof}
Let $(g,x)\mapsto g\cdot x$ be the Borel $G$-action on $X$. By Theorem \ref{continuousaction}, there is a Polish topology on $X$ for which the action is continuous. So for the duration of the proof we may assume without loss of generality that $X$ has a fixed Polish topology and that the action of $G$ on $X$ is continuous. Then by Theorem \ref{lacunary} we may let $Y$ be a Borel subset of $X$ so that if $x, y\in Y$ are distinct then $y\nin K\cdot x$, and $G\cdot Y=X$. (i.e., $Y$ is $K$-discrete and $Y$ meets every orbit.) 

Now, let $\{d_{n}:n\in\mathbb{N}\}$ be a countable dense subset of $G$, let $Y_{0}=Y$, and let $\widetilde{Y}_0=Y_0\cup((d_{0}\cdot Y)\setminus (K\cdot Y_0))$. In other words, we get $\widetilde{Y}_0$ by adding on all the points of $d_{0}\cdot Y$ which are not already within $K$ of a point of $Y_0$. Now, $\widetilde{Y}_0$ may not be $K$-discrete anymore since, while none of the new points are within $K$ of the old points, some of the new points may be within $K$ of each other. However, if we consider the relation $F$ on $\widetilde{Y}_0$ where $xFy\iff y\in K\cdot x$, it follows that $F$ is symmetric and reflexive since $K$ is symmetric and contains the identity. And it follows from Lemma \ref{finite} that $F$ is locally finite since there can be only finitely many $y\in Y\cap (d_0^{-1}K\cdot x$) for which it's possible that we'd have $d_{0}\cdot y\in K\cdot x$. Thus, we may apply Lemma \ref{maxdiscrete} and let $Y_1$ be a Borel, maximal $F$-discrete (hence maximal $K$-discrete) subset of $\widetilde{Y}_0$. We then iterate this construction letting each $Y_{n+1}$ be a maximal $K$-discrete subset of $\widetilde{Y}_{n}=Y_n\cup((d_{n}\cdot Y)\setminus (K\cdot Y_n))$. Note that, since none of the new points of a $\widetilde{Y}_n$ are within $K$ of $Y_n$ and $Y_n$ is $K$-discrete, the maximal $K$-discrete subset $Y_{n+1}$ of $\widetilde{Y}_n$ contains all of $Y_n$ and so the $Y_n$'s are increasing.

Finally, let $M=\bigcup_{n}Y_{n}$. If $x, y\in M$ are distinct, then since the $Y_n$'s are increasing there is an $n$ large enough so that $x, y\in Y_n$ and hence $y\nin K\cdot x$. So $M$ is $K$-discrete and has property (i).

Now we suppose for contradiction that there exists an $x\in X$ such that $M\cap(K\cdot x)=\emptyset$ (i.e., $x\nin K\cdot M$). By the $K$-discreteness of $M$ and the compactness of $K^2$, Lemma \ref{finite} tells us that $M\cap(K^{2}\cdot x)$ must be finite, and so the set 
$$A=\bigcup_{y\in M\cap(K^{2}\cdot x)}(K\cdot y)$$
is closed in $X$ and does not contain $x$. Thus $X\setminus A$ is open and contains $x$. And then for any open symmetric neighborhood of the identity $O\subseteq K$ such that $O\cdot x\subseteq X\setminus A$, we have that $(O\cdot x)\cap (K\cdot M)=\emptyset$. But this could not happen since the $d_{n}$'s are dense and $O$ is open. Since $x\in G\cdot y_0$ for some $y_0\in Y$, we let $g\in G$ be such that $x=g\cdot y_0$, and note that $Og$ is a nonempty open subset of $G$. So since the $d_{n}$'s are dense it follows that $\exists\, l\in\mathbb{N}$ such that $d_{l}\in Og$. But then $d_{l}\cdot y_0\in Og\cdot y_0=O\cdot x$. Hence by the $l$\textsuperscript{\,th} stage of the construction there would have been a $d_{l}\cdot y_0\in d_{l}\cdot Y$ which belonged to $O\cdot x$ that would have been added to $\widetilde{Y}_{l}$. And so there must be some $y\in Y_{l+1}\subseteq M$ which is within $K$ of that $d_{l}\cdot y_0$ and hence within $K$ of $O\cdot x$, a contradiction. Thus every $x\in X$ is such that $(K\cdot x)\cap M\neq\emptyset$, and $M$ has property (ii). \end{proof}

\subsection{Marker Sets for Actions of $\mathbb{T}^{n}\times\mathbb{R}^{n}$}

While working with the free actions of each $G_{n}=\mathbb{T}^{n}\oplus\mathbb{R}^n$, we will fix the seminorm $||(\vec{t},\vec{r})||=\max\{|r_{i}| : 1\leq i\leq n\}$ and let $\rho:G_{n}\times G_{n} \rightarrow [0,\infty)$ be the induced pseudometric given by $\rho(g,h)=||g-h||$.

And then we let $\rho_{X}:X\times X\rightarrow [0,\infty)$ be the induced pseudometric on X given by
\begin{displaymath}
\rho_{X}(x,y)=\left\{
\begin{array}{lr}
\min\{||g||:g\cdot x=y\}, & \text{if}\; y\in G_{n}\cdot x,\\
\infty, & \text{if}\; y\nin G_{n}\cdot x.
\end{array}\right.
\end{displaymath}

We show that $\rho_{X}$ is indeed a well-defined pseudometric.
\begin{lemma} Suppose $(g,x)\mapsto g\cdot x$ is a Borel action of  $G_{n}=\mathbb{T}^{n}\oplus\mathbb{R}^n$ on a standard Borel space $X$. Then \begin{displaymath}
\rho_{X}(x,y)=\left\{
\begin{array}{lr}
\min\{||g||:g\cdot x=y\}, & \text{if}\; y\in G_{n}\cdot x,\\
\infty, & \text{if}\; y\nin G_{n}\cdot x.
\end{array}\right.
\end{displaymath}
is well-defined, and for any $x,y,z\in X$, $\rho_{X}(x,z)\leq\rho_{X}(x,y)+\rho_{X}(y,z)$.
\end{lemma}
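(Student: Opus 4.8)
The plan is to verify the two things the statement asserts: (a) that $\rho_X$ is well-defined, meaning the minimum in the first case is actually attained (so that $\rho_X(x,y)$ is a genuine real number, not merely an infimum), and (b) that $\rho_X$ satisfies the triangle inequality. For (a), the key point is compactness. If $y \in G_n \cdot x$, the set $S_{x,y} = \{g \in G_n : g\cdot x = y\}$ is a coset of the stabilizer $(G_n)_x$, which by Corollary \ref{closedstabilizers} is a closed subgroup of the Polish group $G_n = \mathbb{T}^n \oplus \mathbb{R}^n$ (here I would note that $G_n$ is Polish and the action is Borel, so the corollary applies; alternatively one can pass to a continuous action via Theorem \ref{continuousaction}). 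So $S_{x,y}$ is a nonempty closed subset of $G_n$. The seminorm $\|(\vec t,\vec r)\| = \max\{|r_i| : 1\le i \le n\}$ is continuous on $G_n$, and I want to show $\inf\{\|g\| : g \in S_{x,y}\}$ is attained. Pick any $g_0 \in S_{x,y}$; then the infimum over $S_{x,y}$ equals the infimum over $S_{x,y} \cap \{g : \|g\| \le \|g_0\|\}$. The set $\{g \in G_n : \|g\| \le \|g_0\|\}$ is $\mathbb{T}^n$ times a closed ball in $\mathbb{R}^n$, hence compact, so its intersection with the closed set $S_{x,y}$ is compact and nonempty, and the continuous function $\|\cdot\|$ attains its minimum on it. That gives (a).

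For (b), the triangle inequality, I split into cases according to which of $\rho_X(x,y)$, $\rho_X(y,z)$ is infinite. If either is $\infty$, the right-hand side is $\infty$ and there is nothing to prove. Otherwise $y \in G_n\cdot x$ and $z \in G_n\cdot y$, so $z \in G_n \cdot x$ and $\rho_X(x,z)$ is finite. Now choose, using part (a), elements $g, h \in G_n$ with $g\cdot x = y$, $\|g\| = \rho_X(x,y)$, and $h\cdot y = z$, $\|h\| = \rho_X(y,z)$. Then $(h+g)\cdot x = h\cdot(g\cdot x) = h \cdot y = z$ (using that $G_n$ is abelian and the action is an action), so $h+g \in S_{x,z}$ and therefore $\rho_X(x,z) \le \|h+g\|$. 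Finally $\|h+g\| \le \|h\| + \|g\|$ because $\|\cdot\|$ is a seminorm (it is the pullback of the sup-norm on $\mathbb{R}^n$ under the projection $G_n \to \mathbb{R}^n$, hence subadditive). Combining, $\rho_X(x,z) \le \|g\| + \|h\| = \rho_X(x,y) + \rho_X(y,z)$, as desired. (Symmetry of $\rho_X$, should one want it, follows the same way: if $g\cdot x = y$ then $(-g)\cdot y = x$ and $\|-g\| = \|g\|$, so the minimizing sets for $(x,y)$ and $(y,x)$ have the same norms; but the statement only asks for well-definedness and the triangle inequality.)

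I do not expect any real obstacle here — the argument is routine. The one point that deserves a moment's care is the attainment of the minimum in (a): one must remember that $\|\cdot\|$ is only a seminorm (it vanishes on the $\mathbb{T}^n$ factor), so "$\|g\|$ small" does not confine $g$ to a compact set by itself; the fix is simply to intersect with the compact slab $\{\|g\| \le \|g_0\|\}$, whose $\mathbb{T}^n$-direction is already compact. Everything else is formal manipulation of the action axioms and the seminorm inequality.
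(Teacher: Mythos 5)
Your proof is correct and follows essentially the same approach as the paper: both establish that $\{g \in G_n : g\cdot x = y\}$ is closed (the paper via Theorem \ref{continuousaction} directly, you via Corollary \ref{closedstabilizers}, which amounts to the same thing), intersect with a compact set of the form $\mathbb{T}^n \times \bar{B}(\vec{0},d)$ to force attainment of the minimum of the continuous seminorm, and then derive the triangle inequality from $(g+h)\cdot x = z$ together with subadditivity of $\|\cdot\|$.
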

\begin{proof}
By our Theorem \ref{continuousaction} from Becker and Kechris, we may fix a Polish topology on $X$ for which the action of $G_{n}$ is continuous. Then $A=\{g\in G_{n}:g\cdot x=y\}$ is closed in $G_n$ and attains its minimum seminorm since we may let $d$ be large enough so that the closed ball $\bar{B}_{\rho}(\vec{0},d)$ in $\mathbb{R}^{n}$ is such that $(\mathbb{T}^{n}\times\bar{B}_{\rho}(\vec{0},d))\cap A\neq\emptyset$, hence the minimum of $||g||$ over $A$ will occur within this $\mathbb{T}^{n}\times\bar{B}_{\rho}(\vec{0},d)$, and then the seminorm is a continuous function on the compact set $A\cap(\mathbb{T}^{n}\times\bar{B}_{\rho}(\vec{0},d)$). And for any $g,h\in G_n$ such that $g\cdot x=y$ and $h\cdot y=z$, it follows that $(g+h)\cdot x=z$ and of course $||g+h||\leq||g||+||h||$. Hence $\min\{||u||:u\cdot x=z\}\leq\min\{||g||:g\cdot x=y\}+\min\{||h||:h\cdot y=z\}$.
\end{proof}

We will omit the subscript X from $\rho_{X}$ whenever there's little chance of confusion. And we also define, for $A, B\subseteq X$ and $x\in X$, $\rho(x,A)=\inf\{\rho(x,y):y\in A\}$ and $\rho(A,B)=\inf\{\rho(x,y):x\in A\,\&\, y\in B\}$.

The backbone of our constructions is the existence of the following sets.

\begin{lemma}\label{basicmarker} Let $\mathbb{T}^{n}\times\mathbb{R}^{n}$ act in a Borel manner on a standard Borel space $X$, and let $d$ be a positive real number. Then there is a Borel set $M\subseteq X$ such that 
\begin{itemize}
\item[(i)] if $x, y\in M$ are distinct, then $\rho(x,y)>d$,
\item[(ii)] for any $x\in X$, $\rho(x,M)\leq d$.
\end{itemize}\end{lemma}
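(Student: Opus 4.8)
The plan is to reduce Lemma~\ref{basicmarker} to Theorem~\ref{Gmarker} by exhibiting the group $G_{n}=\mathbb{T}^{n}\times\mathbb{R}^{n}$ as a locally compact Polish group and choosing the compact symmetric neighborhood $K$ of the identity so that the $K$-relation coincides with the relation ``$\rho_{X}(\cdot,\cdot)\leq d$''. Concretely, I would take
\[
K=\mathbb{T}^{n}\times\bar B_{\rho}(\vec 0,d)=\{(\vec t,\vec r)\in G_{n}:\|(\vec t,\vec r)\|\leq d\}.
\]
This $K$ is compact (product of the compact torus with a closed Euclidean ball) and symmetric, and it is a neighborhood of the identity since the seminorm $\|(\vec t,\vec r)\|=\max_i|r_i|$ vanishes on the $\mathbb{T}^{n}$ factor, so $K$ contains the open set $\mathbb{T}^{n}\times B_{\rho}(\vec 0,d)$. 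The first step of the proof is simply to record these facts and invoke Theorem~\ref{Gmarker} to obtain a Borel set $M\subseteq X$ with (a) $y\notin K\cdot x$ for distinct $x,y\in M$, and (b) $K\cdot M=X$.

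The second and main step is to translate properties (a) and (b) into the statements (i) and (ii) of the lemma. The key observation is that for any $x,y\in X$,
\[
y\in K\cdot x\iff \exists g\in G_{n}\ (g\cdot x=y\text{ and }\|g\|\leq d)\iff \rho_{X}(x,y)\leq d,
\]
where the last equivalence uses exactly the content of the preceding lemma, namely that when $y\in G_{n}\cdot x$ the infimum $\min\{\|g\|:g\cdot x=y\}$ is attained (so $\rho_X(x,y)\le d$ forces some witnessing $g$ with $\|g\|\le d$), and when $y\notin G_{n}\cdot x$ we have $\rho_X(x,y)=\infty>d$ and also $y\notin K\cdot x$. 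Given this equivalence, property (a) immediately gives (i): distinct $x,y\in M$ satisfy $y\notin K\cdot x$, hence $\rho_X(x,y)>d$. And property (b) gives (ii): for any $x\in X$ there is $y\in M$ with $x\in K\cdot y$, i.e.\ $x=k\cdot y$ for some $k\in K$; since $K$ is symmetric, $y=k^{-1}\cdot x$ with $\|k^{-1}\|=\|k\|\le d$, so $\rho_X(x,y)\le d$, and therefore $\rho(x,M)\le d$.

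The only points requiring care — and the closest thing to an obstacle — are bookkeeping ones rather than genuine difficulties. First, one must be slightly careful that ``$\rho_X(x,y)\le d$'' really is equivalent to ``$y\in K\cdot x$'' in the degenerate direction: because $\rho$ is only a pseudometric (it ignores the torus coordinates), a given value $\rho_X(x,y)=c\le d$ may be witnessed by many group elements, but we only need one with seminorm $\le d$, which is precisely what attainment of the minimum supplies. Second, I would double-check that the Borelness of $M$ transfers without comment — it does, since $M$ is literally the set produced by Theorem~\ref{Gmarker}, which is Borel in the given standard Borel structure on $X$ (Theorem~\ref{Gmarker} is stated for Borel actions on standard Borel spaces, which is exactly our hypothesis). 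With these checks in place the proof is a short application of Theorem~\ref{Gmarker}, and no new construction is needed.
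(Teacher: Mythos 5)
Your proof is correct and follows essentially the same route as the paper: both take the compact symmetric neighborhood $K=\mathbb{T}^n\times[-d,d]^n$ (your $\mathbb{T}^n\times\bar B_\rho(\vec 0,d)$ in the sup norm), invoke Theorem~\ref{Gmarker} to produce the Borel set $M$, and then unwind the $K$-discreteness and $K\cdot M=X$ properties into the pseudometric statements (i) and (ii). Your write-up is a bit more careful about the equivalence $y\in K\cdot x\iff\rho_X(x,y)\le d$ and about attainment of the minimum in the definition of $\rho_X$, but there is no substantive difference from the paper's argument.
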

\begin{proof}
$\mathbb{T}^{n}\times[-d,d]^n$ is a compact symmetric neighborhood of the identity. So by Theorem \ref{Gmarker} there is a Borel set $M\subseteq X$ such that 
\begin{itemize}
\item if $x, y\in M$ are distinct, then $y\nin (\mathbb{T}^{n}\times[-d,d]^n)\cdot x$,
\item $(\mathbb{T}^{n}\times[-d,d]^n)\cdot M=X$.
\end{itemize}
Now, $y\nin \mathbb{T}^{n}\times[-d,d]^n\cdot x$ implies that no $g$ with $||g||\leq d$ can be such that $g\cdot x=y$. Hence $\rho(x,y)=\min\{||g||:g\cdot x=y\}>d$, and $M$ has property (i). And $\mathbb{T}^{n}\times[-d,d]^n\cdot M=X$ says that for any $x$ there is a $g\in\mathbb{T}^{n}\times[-d,d]^n$, hence $||g||\leq d$, where $g\cdot x\in M$. So then $\rho(x,M)\leq d$ and $M$ has property (ii).
\end{proof}

We refer to such a set $M$ as a \emph{marker set}, and we may say \emph{$d$-marker set} when we want to specify the distance.

\subsection{$\mathbb{T}^{n}$-invariant Rectangular Marker Regions}
Again we  fix  an $n\in\mathbb{N}$, a standard Borel space $X$, and a Borel action of $G=\mathbb{T}^{n}\oplus\mathbb{R}^{n}$ on $X$. However, we now assume that $G=\mathbb{T}^{n}\oplus\mathbb{R}^{n}$ acts freely on $X$. A subset $Y\subseteq X$ will be called \emph{$\mathbb{T}^{n}$-invariant} if $x\in Y\Rightarrow(\mathbb{T}^{n}\times\{0\}^{n})\cdot x\subseteq Y$.

By \emph{marker regions} we will simply mean the $R$-equivalence classes for some subequivalence relation $R$ of $E^X_{G}$. Since the action of $\mathbb{T}^{n}\oplus\mathbb{R}^{n}$ on $X$ is free, we may also treat each $E^X_{G}$-equivalence class as an affine copy of $\mathbb{T}^{n}\oplus\mathbb{R}^{n}$ using the correspondence $[x]=(\mathbb{T}^{n}\oplus\mathbb{R}^{n})\cdot x$. Then, for any subequivalence $R$ of $E^X_G$, equivalence class $[x]\in E^X_G$, and any $y\in[x]$, there is a unique $J\subset \mathbb{T}^{n}\oplus\mathbb{R}^{n}$ so that $[y]_R = J\cdot x$. We then call a marker region a \emph{$\mathbb{T}^{n}$-invariant half-open n-dimensional rectangle} if the corresponding $J$'s in $\mathbb{T}^{n}\oplus\mathbb{R}^{n}$ are of the form $\mathbb{T}^{n}\times\prod_{n}[a_n,b_n)$ where each $b_{n}-a_{n}>0$. (So, we are assuming the rectangles have faces perpendicular to the coordinate axes.) When we say \emph{edge lengths} of such a ``rectangle'', we mean the set of lengths of the intervals $[a_n,b_n)$ corresponding to any one of the $J$'s. Also, we say that a region is a \emph{$\mathbb{T}^n$-invariant half-open n-dimensional rectangular polyhedron} if it is a finite union of $\mathbb{T}^n$-invariant half-open n-dimensional rectangles. Then for each $i\leq n$ we define an \emph{i-face} of such a ``polyhedron'' $P$ to be a maximal $(n-1)$-dimensional subset $F$ of the boundary of $P$ such that if $x,y\in F \,\&\, (\vec{t},\vec{r})\cdot x=y$, then $r_{i}=0$. Note that this definition doesn't require our faces to be connected, but for our purposes this will not matter and actually helps in the constructions.

\begin{figure}[h]
\begin{center}
\setlength{\unitlength}{.2cm}
\begin{picture}(24,18)(0,0)
\put(0,6){\line(1,0){8}}
\put(8,0){\line(0,1){6}}
\put(0,6){\line(0,1){12}}
\put(8,0){\line(1,0){10}}
\multiput(18,0)(0,1){6}{\line(0,1){.75}}
\multiput(0,18)(1,0){18}{\line(1,0){.75}}
\put(18,6){\line(1,0){6}}
\multiput(18,12)(0,1){6}{\line(0,1){.75}}
\multiput(18,12)(1,0){6}{\line(1,0){.75}}
\multiput(24,6)(0,1){6}{\line(0,1){.75}}
\put(19.25,14){\makebox(0,0)[b]{$F_1$}}
\put(19.25,2){\makebox(0,0)[b]{$F_2$}}
\put(4,6.25){\makebox(0,0)[b]{$F_3$}}
\put(21,6.25){\makebox(0,0)[b]{$F_4$}}
\end{picture}\\

\small{In this example of a \emph{half-open rectangular polyhedron}, $F_1$ and $F_2$ are parts of the same \emph{face}. Similarly, $F_3$ and $F_4$ are parts of a single \emph{face}.}
\end{center}
\end{figure}

Now we will show that we may construct marker regions which are not only rectangular but are nearly square.

\begin{theorem}\label{squareregions} Let $d\geq\epsilon>0$. Then there is a Borel subequivalence relation $R_d$ of $E^X_G$ such that each $R_d$-marker region is a $\mathbb{T}^n$-invariant half-open n-dimensional rectangle with edges of lengths at least $d$ and less than $d+\epsilon$.
\end{theorem}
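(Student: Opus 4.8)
The plan is to build $R_d$ by cutting every $G$-orbit with flat ``walls'' perpendicular to the $\mathbb{R}^n$-coordinate axes. For each $i\le n$ I would choose, in a Borel and $\mathbb{T}^n$-invariant way, a set $W_i\subseteq X$ meeting each orbit in a union of $i$-faces (affine hyperplanes $\{r_i=\text{const}\}$ in the affine picture $[x]\cong\mathbb{T}^n\times\mathbb{R}^n$) whose successive $r_i$-values have consecutive gaps in $[d,d+\epsilon)$; since $\rho$ ignores the $\mathbb{T}^n$-coordinate, each such wall, and hence each slab between two consecutive walls of $W_i$, is automatically $\mathbb{T}^n$-invariant. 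I would then let $R_d$ be the intersection of the $n$ equivalence relations ``$x$ and $y$ lie in the same $i$-slab'' ($1\le i\le n$); this is a Borel subequivalence relation of $E^X_G$ and each of its classes is precisely the $\mathbb{T}^n$-invariant half-open rectangle obtained by intersecting one $i$-slab for each $i$, so all of its edge lengths lie in $[d,d+\epsilon)$. The only resource needed is a very fine marker set: fix $\delta>0$ with (say) $4\delta<\epsilon$ and use Lemma~\ref{basicmarker} to obtain a Borel $M\subseteq X$ that is $\delta$-discrete and $\delta$-dense for $\rho$; on each orbit the $r_i$-coordinates of $M$ then form a $\delta$-dense subset of $\mathbb{R}$.

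The crux is constructing $W_i$. The naive recipe --- pick a first wall, then step forward by $\approx d+\epsilon/2$ at each stage, snapping the new wall to a marker point within $\delta$ of the target so that every gap lands in $[d,d+\epsilon)$ --- cannot be carried out directly, because a $G$-orbit is bi-infinite in the $i$-direction, there is no canonical first wall, and a naive Borel selection of one is in general impossible (a familiar obstruction, tied to the fact that $E^X_G$ need not be smooth). The way around it is to use that $M$ is a \emph{countable} complete section, so that $E^X_G\restr M$ is a countable Borel equivalence relation on which Borel marker arguments are available. Using Lemma~\ref{maxdiscrete}, iterated in the manner of the proof of Theorem~\ref{Gmarker}, I would first Borel-select a \emph{sparse} family of walls in the $i$-direction cutting each orbit into slabs whose $i$-width is bounded but large compared with $d$ and $\epsilon$; this choice is not, and need not be, canonical on a given orbit, only Borel. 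Inside each sparse slab the face on the negative-$r_i$ side is now a canonical starting wall, so the greedy snapping procedure can be run from it, and because the slab is wide and $M$ is $\delta$-dense (so that every interval of length $\epsilon$ contains plenty of admissible marker positions) one can arrange, by fixing in advance the number of sub-slabs and spreading the slack over the final steps, that the last gap also falls in $[d,d+\epsilon)$ and the subdivision closes up exactly on the far face of the sparse slab.

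Doing this for each $i=1,\dots,n$ produces the wall sets $W_1,\dots,W_n$ and hence $R_d$ as above. The step I expect to be the genuine obstacle is exactly this combination of requirements: the walls must be \emph{at once} Borel, \emph{exactly} flat, and spaced within the narrow window $[d,d+\epsilon)$. Borel-ness rules out a canonical anchor and so forces the two-scale device (sparse Borel anchoring, then a canonical-from-a-face fine subdivision), while exact flatness together with the narrow window forces care in the $\delta$-scale bookkeeping --- in particular one must verify that the sparse slabs can always be made wide enough, and the fine subdivision steered accurately enough using only marker positions, for every sub-slab width to stay in $[d,d+\epsilon)$. This is precisely where the hypothesis $d\ge\epsilon>0$ and the freedom to shrink $\delta$ are used, and where (in dimension $\ge 2$) the bookkeeping of how the $W_i$ for different $i$ are laid down relative to one another has to be organized with some care.
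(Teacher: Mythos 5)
Your overall strategy---carve each orbit by flat walls perpendicular to each coordinate axis and let $R_d$ be the common refinement of the resulting slab partitions---would prove the theorem if the wall sets $W_i$ could be built, and the fine-subdivision step inside a bounded slab (snap to markers, spread the slack, use $d\ge\epsilon$ to keep all lengths in $[d,d+\epsilon)$) is essentially the paper's own final claim. But for $n\ge 2$ the sparse anchoring step breaks, and it is not repaired by the tools you cite. To extract from the countable section $M$ a discrete family of flat $i$-walls you would need Lemma~\ref{maxdiscrete} (or an iteration as in Theorem~\ref{Gmarker}) applied to a relation of the form ``same orbit and $|r_i|$ below the anchoring spacing''; but the associated neighborhood is a slab $\mathbb{T}^n\times[-\delta',\delta']\times\mathbb{R}^{n-1}$, which is non-compact in the $n-1$ transverse directions, so the relation is \emph{not} locally finite and Lemma~\ref{finite}---the paper's device for verifying local finiteness, whose hypothesis is precisely compactness of $K$---gives nothing: a single marker point has infinitely many markers in its slab, spread along the $j\ne i$ directions. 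The two-scale device does not rescue this, because the same difficulty recurs at every scale: fixing even one flat hyperplane per coarse slab means choosing a single $r_i$-level coherently across an entire orbit, yet for $n\ge 2$ the $r_i$-differences between marker points in an orbit do not form a discrete subset of $\mathbb{R}$ (two markers may have small $|r_i|$ while differing greatly in another coordinate). You place the danger in how the $W_i$ for distinct $i$ are coordinated; in fact the construction of a single $W_i$ is already where the argument fails.

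The paper's proof is organized precisely to avoid global flat walls. Claim~1 builds polyhedral regions from cubes centered at points of a $\Delta_1$-marker set, using the \emph{compact} neighborhood $K=\mathbb{T}^n\times[-\Delta_2,\Delta_2]^n$ so that Lemma~\ref{finite} does give local finiteness, and then shifts faces only \emph{locally} to separate nearby parallel ones; Claim~2 cuts each polyhedron by the linear extensions of its own faces, independently of its neighbors, producing a ``brick-wall'' pattern whose face levels need not line up across region boundaries; only then does Claim~3 grid-subdivide inside each rectangle, with $D\ge d\lceil d/\epsilon\rceil$ guaranteeing that every edge length can be apportioned into lengths in $[d,d+\epsilon)$. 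A grid of $i$-walls aligned across a whole orbit---what your $W_i$'s demand---is strictly stronger structure than the theorem asserts, and it is exactly what a Borel construction cannot in general produce.
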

\begin{proof} Let $\Delta_2\gg\Delta_1\gg D\geq d\lceil\frac{d}{\epsilon}\rceil$. The result follows clearly from the following claims.

\textit{\underline{Claim 1}:  There is a Borel subequivalence relation $R_0$ of $E^X_G$ where each $R_0$-marker region is a $\mathbb{T}^n$-invariant half-open n-dimensional rectangular polyhedron where every pair of parallel faces have a perpendicular distance of at least $D$.}

\textit{\underline{Claim 2}:  If $R_0$ is a Borel subequivalence relation of $E^X_G$ where each $R_0$-marker region is a $\mathbb{T}^{n}$-invariant half-open n-dimensional rectangular polyhedron with parallel faces having a perpendicular distance of at least $D$, then there is a Borel subequivalence relation $R_D$ of $R_0$ so that every $R_0$-marker region is partitioned into $R_D$ marker regions which are $\mathbb{T}^{n}$-invariant half-open n-dimensional rectangles with edges of lengths at least D.}

\textit{\underline{Claim 3}:  If $R_D$ is a Borel subequivalence relation of $E^X_G$ where the $R_D$-marker regions are $\mathbb{T}^{n}$-invariant half-open n-dimensional rectangles with edge lengths at least $D$, then there is a Borel subequivalence $R_d$ of $R_D$ so that every $R_D$-marker region is partitioned into $R_d$ marker regions which are $\mathbb{T}^{n}$-invariant half-open n-dimensional rectangles with edges of lengths at least $d$ and less than $d+\epsilon$.}

\textit{Proof of Claim 1.}  We let $M$ be a basic Borel $\Delta_1$-marker set given by Lemma \ref{basicmarker}, and we let $K=\mathbb{T}^{n}\times[-\Delta_{2},\Delta_{2}]^n$.

Considering the relation $F$ on $M$ where $xFy\iff y\in K\cdot x$, it follows that $F$ is symmetric and reflexive since $K$ is symmetric and contains the identity. And it follows from Lemma \ref{finite} that $F$ is locally finite since $M$ is $(\mathbb{T}^{n}\times[-\Delta_{1},\Delta_{1}]^{n})$-discrete and $K$ is compact. Thus, we may apply Lemma \ref{maxdiscrete} and let $A_0$ be a Borel maximal $F$-discrete (hence maximal $K$-discrete) subset of $M$. We then iterate this construction letting $A_1$ be a maximal $K$-discrete subset of $M\setminus A_0$, letting $A_2$ be a maximal $K$-discrete subset of $M\setminus(A_{0}\cup A_{1})$, letting $A_3$ be a maximal $K$-discrete subset of $M\setminus(A_{0}\cup A_{1}\cup A_{2})$, and so on.

This defines a partition $A_{0},A_{1},\ldots, A_{k}$ of $M$ into disjoint Borel subsets where for any $i$ and $x\neq y\in A_{i}$, $\rho(x,y)>\Delta_{2}$. (The choice of \textit{maximal} $K$-discrete subsets guarantees that $k$ is finite.)

Now let $J=\mathbb{T}^{n}\times[-\Delta_{1}, \Delta_{1})^n$, and for each $x\in M$ let $R_{x}=J\cdot x$, so that $R_{x}$ is the $\mathbb{T}^{n}$-invariant half-open cubic region with ``center'' $\mathbb{T}^{n}\cdot x$ and edge lengths $2\Delta_{1}$. And we consider the relation $$x R_{M} y \Leftrightarrow \forall z\in M (x\in R_{z}\leftrightarrow y\in R_{z}).$$

Then by the definition of the pseudometric $\rho$ and the marker properties of $M$, the $R_{M}$ regions are already $\mathbb{T}^{n}$-invariant half-open n-dimensional rectangular polyhedra with faces perpendicular to the coordinate axes. But we need to modify these regions so that perpendicular distance between parallel faces is at least $D$. To do this, we define a collection $\{R'_{x} : x\in M\}$ of adjusted rectangles by inductively defining each $\{R'_{x} : x\in A_{i}\}$.

For $x\in A_{0}$ let $R'_{x}=R_{x}$. Then assuming that, for each $x, y\in \bigcup_{j<i}A_{j}$, we have 
\begin{itemize}
\item[(i)] $R_{x}\subseteq R'_{x}$,
\item[(ii)] the corresponding faces of $R_{x}$ and $R'_{x}$ have perpendicular distance no more than $\frac{1}{10}\Delta_{1}$, and
\item[(ii)] for each face $F_{1}$ of $R'_{x}$ and any parallel face $F_{2}$ of $R'_{y}$ with $\rho(x,y)\leq 3\Delta_{1}$, the perpendicular distance between $F_{1}$ and $F_{2}$ is at least $D$,
\end{itemize} we suppose $x\in A_i$ and let $R'_{y_{1}},\ldots, R'_{y_{m}}$ enumerate the surrounding rectangles where $y_{1},\ldots,y_{m}\in\bigcup_{j<i}A_{j}$ and $\rho(x,y_{l})\leq 3\Delta_{1}$ for each $1\leq l\leq m$. Note that Lemma \ref{finite} can be used again to verify $m$ must be finite, but in fact a volume argument would show that $m$ is bounded by $8^n$.

So then for each face of $R_{x}$ there are at most $2\cdot 8^n=2^{3n+1}$ many faces of $R'_{y_{1}},\ldots, R'_{y_{m}}$ parallel to it, and so if $\frac{1}{10}\Delta_{1}>2^{3n+1}(2D)$ then each face can be shifted away from the center $x$ so that the new faces satisfy (ii) and (iii). To define $R'_{x}$ for each $x\in A_{i}$ we simply shift each of $R_{x}$'s $2n$ faces away from the center in this way so that (i) will also hold for $\bigcup_{j<i+1}A_{j}$. Noting that for any $x,y\in A_{i}$, we have $\rho(x,y)>\Delta_{2}\gg\Delta_{1}$, we see that the constructions of any pair $R'_{x}$ and $R'_{y}$ at the $i^{\text{th}}$ stage do not affect each other. And it follows that the collection $\{R'_{x}: x\in M\}$ satisfies properties (i)-(iii) for all $x,y\in M$ and hence if we let $$x R_{0} y \Leftrightarrow \forall z\in M (x\in R'_{z}\leftrightarrow y\in R'_{z})$$ then $R_0$ is as desired.

\textit{Proof of Claim 2.} The proof of this claim requires no significant change from the argument in the $\mathbb{Z}^n$ case from \cite{gao_jackson}, but we provide it here anyway. To divide an $R_0$ region into the desired $R_D$ rectangles, we simply ``cut up'' each region by the linear expansions of its faces. For each $R_0$ region $P$, note that the faces of $P$ are
Borel subsets of $P$ which are definable from $P$.

Let $F_1,\dots, F_k$ be all the faces of $P$, and then for each $1\leq j\leq
k$, we let the face $F_j$ partition $P$ into at most two parts as follows. Say $F_{j}$ is an $i$-face. Then we define $F_j^+\subseteq P$ to be the set
of all $x\in P$ such that for any $y\in F_j$, letting $(\vec{t},\vec{r})$ be the
unique element of $\mathbb{T}^{n}\oplus\mathbb{R}^n$ with $(\vec{t},\vec{r})\cdot y=x$, the $i$-th coordinate of
$\vec{r}$ is non-negative. Let also $F_j^-=P-F_j^+$.  ($F_j^-$ could be
empty.)

Finally define a subequivalence relation $R_P$ on $P$ by
$$ xR_Py\iff \forall 1\leq j\leq k \,(\, x\in F_j^+\leftrightarrow
y\in F_j^+\,). $$ Then the equivalence classes of $R_P$ are $\mathbb{T}^{n}$-invariant half-open rectangles
whose faces are parts of linear expansions of the faces of $P$. These
rectangles have edge lengths greater than $D$ because the
parallel faces of $P$ have perpendicular distances greater than $D$.

\begin{figure}[h]
\begin{center}
\setlength{\unitlength}{.1cm}
\begin{picture}(60,55)(0,0)
\put(10,0){\line(0,1){20}}
\put(0,20){\line(1,0){10}}
\put(0,20){\line(0,1){15}}
\multiput(0,35)(1,0){22}{\line(1,0){0.5}}
\put(22,35){\line(0,1){20}}
\multiput(22,55)(1,0){23}{\line(1,0){0.5}}
\put(10,0){\line(1,0){25}}
\multiput(35,0)(0,1){10}{\line(0,1){0.5}}
\put(35,10){\line(1,0){25}}
\multiput(60,10)(0,1){33}{\line(0,1){0.5}}
\multiput(45,55)(0,-1){12}{\line(0,-1){0.5}}
\multiput(45,43)(1,0){15}{\line(1,0){0.5}}

\put(10,20){\line(0,1){15}}
\put(22,0){\line(0,1){35}}
\put(35,10){\line(0,1){45}}
\put(45,10){\line(0,1){33}}
\put(10,20){\line(1,0){50}}
\put(10,10){\line(1,0){25}}
\put(22,35){\line(1,0){38}}
\put(22,43){\line(1,0){23}}

\end{picture}\\
\small{Partitioning $P$ into $R_{P}$ regions by linear expansions of its faces.\\ (Points along  interior lines belong to the rectangle which is above and to the right.)}
\end{center}
\end{figure}

\textit{Proof of Claim 3.}  For each $x\in X$, let $\vec{l}(x)=(l_{1}(x),\ldots,l_{n}(x))$ where each $l_{i}(x)=\sup\{r\in\mathbb{R}:((\vec{0}, r e_{i})\cdot x)R_{D}x\} - \inf\{r\in\mathbb{R}:((\vec{0}, r e_{i})\cdot x)R_{D}x\}$. So $\vec{l}(x)$ measures the edge lengths of the sides of the $R_D$-marker region containing $x$, and $x R_D y \Rightarrow \vec{l}(x)=\vec{l}(y)$. We also measure the relative position (from below) of $x$ within its marker region by $\vec{p}(x)=(p_{1}(x),\ldots,p_{n}(x))$ where $p_{i}(x)=-(\inf\{r\in\mathbb{R}:((\vec{0}, r e_{i})\cdot x)R_{D}x\})$.

For each $1\leq i\leq n$, let $1\leq k_{i}(x)\in\mathbb{N}$ be least such that $l_{i}(x)-k_{i}(x)d<d$ and then let $\varphi_{i}(x)$ be the least $m\in\mathbb{N}$ be such that $$p_{i}(x)\leq(m+1)\!\left( d+\frac{l_{i}(x)-k_{i}(x)d}{k_{i}(x)}\right).$$

Now, define $$x R_d y \Leftrightarrow [x R_{D}y\,\,\text{and}\,\,\forall 1\leq i\leq n\,(\varphi_{i}(x)=\varphi_{i}(y))].$$

Then, noting that $l_{i}(x)-k_{i}(x)d<d\Rightarrow k_{i}(x)+1>l_{i}(x)/d>D/d>\lceil\frac{d}{\epsilon}\rceil$, hence $k_{i}(x)\geq\lceil\frac{d}{\epsilon}\rceil$, it follows that the $R_{d}$-marker regions have edge lengths at least $d$ and at most some $$d+\frac{l_{i}(x)-k_{i}(x)d}{k_{i}(x)}< d+\frac{d}{k_{i}(x)}\leq d+\frac{d}{\lceil\frac{d}{\epsilon}\rceil}\leq d+\epsilon.$$\end{proof}

\subsection{Orthogonal Marker Regions for Free Actions of $\mathbb{T}^{<\omega}\oplus\mathbb{R}^{<\omega}$}
Where $E$ is the orbit equivalence relation induced by an action of $\mathbb{T}^{<\omega}\oplus\mathbb{R}^{<\omega}$ we will use $E_n$ to denote the subequivalence relation induced by the action of the subgroup $\mathbb{T}^{n}\oplus\mathbb{R}^{n}$. (We will not return to using the usual meaning of $E_1$ until the end of the proof of Theorem \ref{infpowers}.)

We have established that, in the context of free actions of $\mathbb{T}^{n}\oplus\mathbb{R}^{n}$ on a standard Borel space $X$, we can build the appropriate marker sets, construct the partitions of the orbit equivalence classes into rectangular regions which are also $\mathbb{T}^{n}$-invariant, and also make the necessary adjustments to those regions in order to follow the Gao-Jackson machinery of \cite{gao_jackson}. And thus we may run essentially the same geometric construction of a sequence of orthogonal marker regions.

In particular, the proof of Lemma 5.2 of \cite{gao_jackson} depends on the same sort of adjustment of faces and subdivision of the polyhedral regions that we have illustrated in the proof of our Theorem \ref{squareregions}, and since the large scale geometry of $\mathbb{R}^n$ is the same as for $\mathbb{Z}^n$, only very superficial changes need be made in order to follow the same procedure for our Borel regions in $\mathbb{R}^{n}$ as for their clopen regions in $\mathbb{Z}^{n}$. (That we have really been working in $(\mathbb{R}^{n}\times\mathbb{T}^{n})/\mathbb{T}^n$ rather than $\mathbb{R}^{n}$ is a minor technicality as we do so by simply treating entire $\mathbb{T}^{n}$ orbits as if they are $\mathbb{R}^n$ points, which is not a problem since the $\mathbb{T}^{n}$'s are compact and so act smoothly and have Borel selectors. Thus, we may obtain the following analogue to Lemma 5.2 of \cite{gao_jackson}:

\begin{lemma} \label{morthogonal}
Let $R_1,\dots, R_k$ be a sequence of Borel subequivalence relations of
$E_{n}$ satisfying the following:
\begin{enumerate}[label=(\roman*)]
\item\label{morthogonal1}
On each $E_n$ class, each $R_i$ induces a partition into $\mathbb{T}^{n}$-invariant half-open polyhedral regions which are unions of $\mathbb{T}^{n}$-invariant half-open rectangles with edge lengths between $d$ and $12 d$.
\item
In any $\rho$-ball $B$ of radius $100,000 \cdot 16^n d$ in some $E_n$
equivalence class, there are at most $b$ integers $i$ such that one of the
$R_i$ regions has a face $\mathcal{F}$ which intersects $B$.
\end{enumerate}
Then there is a Borel subequivalence relation $\tilde{R}^n_d \subseteq E_n$
satisfying:
\begin{enumerate}[label=(\roman*)]
\item
Each $\tilde{R}^n_d$ class is a $\mathbb{T}^n$-invariant half-open rectangular region with edge lengths
between $9 d$ and $12 d$.
\item
Every face of an $\tilde{R}^n_d$ region is at least
$\frac{1}{9000^n 16^{n^2} b} d$ from any parallel face of an $R_i$ region (for any $i$).
\end{enumerate}
\end{lemma}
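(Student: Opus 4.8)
The plan is to follow the proof of Lemma~5.2 of \cite{gao_jackson} essentially verbatim, substituting the Borel marker sets for $\mathbb{T}^n\oplus\mathbb{R}^n$ built above (Lemma~\ref{basicmarker}, Theorem~\ref{Gmarker}) for the clopen marker sets of the $\mathbb{Z}^n$ setting, and reusing the face-adjustment and face-subdivision techniques already displayed in the proof of Theorem~\ref{squareregions}. First I would apply Theorem~\ref{squareregions} with parameters chosen so that the raw marker regions are $\mathbb{T}^n$-invariant half-open cubes of edge length in $[9d,9d+\epsilon)$ for some tiny $\epsilon$; these are the regions that will be deformed into the classes of $\tilde R^n_d$.

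The heart of the construction is the adjustment of faces. As in Claim~1 of the proof of Theorem~\ref{squareregions}, I would partition the underlying marker set into finitely many Borel pieces $A_0,\dots,A_j$, each $\rho$-separated by a distance $\Delta\gg d$, and process them in order: when deforming the raw cube attached to $x\in A_i$, each of its $2n$ faces is pushed outward so as to clear both the corresponding faces of the already-deformed regions attached to nearby points of $\bigcup_{\ell<i}A_\ell$ and every parallel face of every $R_i$ meeting the relevant $\rho$-ball of radius $100{,}000\cdot 16^n d$. Lemma~\ref{finite} and a volume count bound the number of raw cubes, and of their faces, near a given face; hypothesis~\ref{morthogonal1} and the same volume count bound the number of faces each relevant $R_i$ contributes; and the ball-counting hypothesis bounds by $b$ the number of relevant $i$. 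Multiplying yields a uniform bound $B=B(n,b)$ on the obstacle faces parallel to any one face being adjusted, and the constants $100{,}000\cdot 16^n$, $\tfrac{1}{9000^n 16^{n^2} b} d$, and the window $[9d,12d)$ are exactly those for which the shifting room (of order $d$) exceeds $B$ times the target clearance $\tfrac{1}{9000^n 16^{n^2} b} d$, so every face can be placed clear of all obstacles. Since a common $A_i$ is $\Delta$-separated, deformations within a color class do not interact, and Borelness is preserved because every step is a Borel operation on the marker sets, the pseudometric $\rho_X$, and the selection functions of Theorem~\ref{selection}. The output is a Borel subequivalence relation whose classes are $\mathbb{T}^n$-invariant half-open rectangular polyhedra with parallel faces at perpendicular distance $\geq 9d$ and every face at distance $\geq\tfrac{1}{9000^n 16^{n^2} b} d$ from any parallel $R_i$-face.

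It remains to turn these polyhedra into rectangles of edge length in $[9d,12d)$, which I would do by the two subdivisions used in the proof of Theorem~\ref{squareregions}: first cut each polyhedron along the linear extensions of its own faces to produce $\mathbb{T}^n$-invariant half-open rectangles, whose edges exceed $9d$ because the parallel faces of the polyhedron are $\geq 9d$ apart; then evenly subdivide any rectangle of edge length $\geq 12d$ into rectangles of edge length in $[9d,12d)$. Every new cutting hyperplane lies on an extension of an already-adjusted face and so is automatically $\geq\tfrac{1}{9000^n 16^{n^2} b} d$ from parallel $R_i$-faces, while in the even-subdivision step the cut locations can be chosen from an interval keeping them that far from every parallel $R_i$-face, so both conclusions hold. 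The $\mathbb{T}^n$-invariance persists because every shift and cut is carried out in an $\mathbb{R}^n$-direction only, treating whole $\mathbb{T}^n$-orbits as single points via the Borel selectors for the compact groups $\mathbb{T}^n$.

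The hard part is the constant bookkeeping inside the adjustment step: one must check that the shifting room genuinely dominates the \emph{aggregate} clearance demanded by all nearby obstacle faces, \emph{simultaneously} for the $2n$ faces of every region and \emph{through} the subsequent subdivisions, without collapsing the size window $[9d,12d)$. This is exactly the bound-chasing of \cite{gao_jackson}; the only genuinely new point is that the geometric moves, originally phrased for clopen subsets of $\mathbb{Z}^n$, remain Borel in the quotient setting $(\mathbb{R}^n\times\mathbb{T}^n)/\mathbb{T}^n$, which is what the marker-set and selector machinery of this section supplies.
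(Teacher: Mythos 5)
Your proposal is correct and matches the paper's approach: the paper itself gives no detailed argument for Lemma~\ref{morthogonal}, instead observing that the proof of Lemma~5.2 of \cite{gao_jackson} transfers almost verbatim once one has the Borel marker sets of Lemma~\ref{basicmarker} and Theorem~\ref{Gmarker}, the face-adjustment and subdivision techniques displayed in the proof of Theorem~\ref{squareregions}, and the device of treating each compact $\mathbb{T}^n$-orbit as a single point via a Borel selector. You have simply made explicit the steps the paper delegates to \cite{gao_jackson} — the finite Borel coloring, the outward shifting of faces past a bounded family of obstacle faces, and the polyhedron-to-rectangle subdivision — which is exactly the intended reading.
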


Here, $b$ should be thought of as a small integer to be determined later, and $d$ is sufficiently large that the fraction in property (ii) of the conclusion is $\geq 1$. Continuing to follow their procedure, we may construct a series of subequivalence relations with the analogous properties and get a similar inductive lemma as the one they arrive at for a free action of $\mathbb{Z}^{<\omega}$. What this allows us to do is begin with $R^i_i$'s that are the $i$-dimensional almost square regions from our Theorem \ref{squareregions} but with side lengths growing very fast. Then we refine the faces of each such region so that, restricted to any particular lower dimension, the projections of those ``fuzzy'' faces do not come too close to the faces of the previous almost square regions. This ``orthogonality'' limits the number of faces that are able to cut between any two equivalent points as we move up the dimensions, letting eventual agreement of the subequivalences to witness equivalence of points while also keeping a nice enough structure to the regions that we can definably pick points from each of them (namely their centers) along the way.

\begin{figure*}[h]
\begin{center}
\setlength{\unitlength}{.3cm}
\begin{picture}(40,13)(0,0)
\put(10,0){\makebox(0,0)[b]{$R^1_1$}}
\put(15,0){\makebox(0,0)[b]{$R^2_1$}}
\put(20,0){\makebox(0,0)[b]{$R^3_1$}}
\put(25,0){\makebox(0,0)[b]{$R^4_1$}}
\put(15,3.4){\makebox(0,0)[b]{$R^2_2$}}
\put(20,3.4){\makebox(0,0)[b]{$R^3_2$}}
\put(25,3.4){\makebox(0,0)[b]{$R^4_2$}}
\put(20,7){\makebox(0,0)[b]{$R^3_3$}}
\put(25,7){\makebox(0,0)[b]{$R^4_3$}}
\put(25,10.4){\makebox(0,0)[b]{$R^4_4$}}

\put(15,3.2){\vector(0,-1){1.6}}
\put(20,3.2){\vector(0,-1){1.6}}
\put(20,6.8){\vector(0,-1){1.6}}
\put(15,2.8){\vector(0,-1){1.2}}
\put(25,6.8){\vector(0,-1){1.6}}
\put(25,3.2){\vector(0,-1){1.6}}
\put(25,10.2){\vector(0,-1){1.6}}

\put(10.8,.6){\vector(1,1){3.4}}
\put(15.7,.7){\vector(1,2){3.5}}
\put(20.8,.8){\vector(1,3){3.4}}
\put(25.8,.8){\vector(1,4){3}}

\put(30,7){\makebox(0,0)[b]{$\vdots$}}
\put(30,8.5){\makebox(0,0)[b]{$\vdots$}}
\put(32,3.8){\makebox(0,0)[b]{$\cdots$}}
\put(34,3.8){\makebox(0,0)[b]{$\cdots$}}
\end{picture}
\end{center}
\end{figure*}

In particular, in the proof of the inductive Lemma 6.1 of \cite{gao_jackson}, we have the $R^{i}_{i}$ partitions that we constructed in our Theorem \ref{squareregions}. And when given the $R^{j+1}_{j},\ldots,R^{i-1}_j$ which for us are assumed to induce partitions into $\mathbb{T}^{n}$-invariant half-open polyhedral regions when restricted to $E_{j}$, then we may still apply our Lemma \ref{morthogonal} to produce a Borel $\mathbb{T}^n$-invariant rectangular partition $\tilde{R}$ which is orthogonal to them. And so then we may define $R^i_{j}$ in the same way by letting $$x R^i_{j} y \Leftrightarrow c([x]_{\tilde{R}}) R^i_{j+1} c([y]_{\tilde{R}}).$$ where we may still let $c(A)$ for an $\tilde{R}$ class $A$ be a center point of $A$ since $\mathbb{T}^n$ is compact and therefore $E^X_{\mathbb{T}^n}$ has a Borel selector. Hence we may define $c(A)$ by applying a Borel selector to the unique $E^X_{\mathbb{T}^{n}}$ class of points in $A$ which are equally $\rho_{X}$-distant from each pair of parallel faces of $A$. Note then that $R^{i}_{j}$ is also Borel. Thus, we have the following analogue to Lemma 6.1 of \cite{gao_jackson}.

\begin{lemma} \label{indstep}
Let $j<i$. Let $d_{j+1}$, $d_j$ be positive reals with $d_{j+1} \gg d_j$ (the exact condition
necessary is specified below).
Suppose $R^i_{j+1}$ is a Borel subequivalence relation of $E_i \subseteq E$.
Assume that the restriction of $R^i_{j+1}$ to each $E_{j+1}$ class induces a partition into $\mathbb{T}^{j+1}$-invariant polyhedral regions each of which is a
union of $\mathbb{T}^{j+1}$-invariant half-open rectangles with edge lengths between $d_{j+1}$ and $12 d_{j+1}$.
Suppose Borel equivalence relations $R^j_j$, $R^{j+1}_j, \dots$,
$R^{i-1}_j$ and $R^{j+1}_{j+1},\dots, R^{i-1}_{j+1}$ have been defined and satisfy:

\begin{enumerate}[label=(\roman*)]
\item
$R^j_j\subseteq E_j$; $R^k_j\subseteq E_k$ and $R^k_{j+1} \subseteq E_k$ for all $j < k \leq i-1$.
\item
$R^j_j$ induces a partition of each $E_j$ class into $\mathbb{T}^{j}$-invariant half-open rectangular regions
with edge lengths from $[d_j, d_j+\epsilon)$.
\item
For $j < k \leq i-1$, the restriction of $R^k_j$ to each $E_j$ class gives a partition
into $\mathbb{T}^j$-invariant polyhedral regions $R$ each of which is a union of $\mathbb{T}^{j}$-invariant half-open rectangles with edge lengths between $9 d_{j}$ and $12 d_j$.
\item \label{proph}
On each $E_j$ class, for each region $R$ induced by the restriction of $R^k_j$, there is a region $R'$ induced by the restriction of $R^k_{j+1}$  such that each face of $R$ is within $12 d_j$ of a face of $R'$.
\item \label{propb}
In any ball  $B$ of radius $100,000 \cdot 16^j d_j$ contained in an $E_j$ class,
there are at most $j+1$ many $k$ with $j < k \leq i-1$
such that some region induced by the restriction of $R^k_j$ has a face intersecting $B$.
\item  \label{propd}
For any $j < k_1 < k_2 \leq i-1$, and regions $R_1$, $R_2$ contained in an $E_{j}$
class induced by the restrictions of
$R^{k_1}_{j}$, $R^{k_2}_{j}$ respectively, if $\mathcal{F}_1$, $\mathcal{F}_2$ are parallel faces
of $R_1$, $R_2$, then $\rho(\mathcal{F}_1,\mathcal{F}_2) > \frac{1}{9000^{j} 16^{{j}^2} ({j}+1)} d_{j}$.
\item\label{propdj+1}
For any $j < k_1 < k_2 \leq i$, and regions $R_1$, $R_2$ contained in an $E_{j+1}$
class induced by the restrictions of
$R^{k_1}_{j+1}$, $R^{k_2}_{j+1}$ respectively, if $\mathcal{F}_1$, $\mathcal{F}_2$ are parallel faces
of $R_1$, $R_2$, then $\rho(\mathcal{F}_1,\mathcal{F}_2) > \frac{1}{9000^{j+1} 16^{(j+1)^2} (j+2)} d_{j+1}$.
\end{enumerate}

\noindent Then there is a Borel subequivalence relation $R^i_j \subseteq E_i$
satisfying the following:

\begin{enumerate}[label=(\roman*)]
\item
On each $E_j$ class, $R^i_j$ induces a partition into $\mathbb{T}^{j}$-invariant polyhedral regions $R$ each of which is
a union of $\mathbb{T}^{j}$-invariant half-open rectangles with edge lengths between $9 d_{j}$ and $12 d_j$.
\item \label{conc2}
On each $E_j$ class, for each region $R$ induced by $R^i_j$, there is a region $R'$ induced by
$R^i_{j+1}$ restricted to the $E_j$ class such that each face of $R$ is within $12 d_j$ of a face of $R'$.
\item
Condition \ref{propb} continues to hold, where now $j<k \leq i$.
\item\label{concd}
Condition \ref{propd} continues to hold,  where now $j<k_1 < k_2 \leq i$.
\end{enumerate}
\end{lemma}

We argue now that this construction still gives us what we need in our more general context of $\mathbb{T}^{<\omega}\oplus\mathbb{R}^{<\omega}$ acting on $X$. Namely, some relatively minor changes give us an analogue to Theorem 6.2 of \cite{gao_jackson}.

\subsection{Hypersmoothness of the Free Actions}

\begin{theorem}\label{infpowers}
Suppose $E$ is the orbit equivalence relation on a standard Borel space $X$ induced by a free Borel action of $\mathbb{T}^{<\omega}\oplus\mathbb{R}^{<\omega}$. Then $E$ is hypersmooth.
\end{theorem}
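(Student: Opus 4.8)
The plan is to build the required increasing sequence of smooth Borel equivalence relations out of the subequivalence relations $\{R_n\}$ supplied by Lemma~\ref{Rregions}. Recall from that lemma that each $R_n\subseteq E_n\subseteq E$ is Borel and that, whenever $x\mathrel{E}y$, we have $x\mathrel{R_n}y$ for all sufficiently large $n$. So the only real work is to show that each individual $R_n$ is smooth; granting that, the conclusion is essentially formal.

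The crux, then, is that each $R_n$ is \emph{smooth}, and this is where we exploit that an $R_n$-class is a bounded marker region and not a whole orbit. An $R_n$-class is a $\mathbb{T}^n$-invariant half-open rectangular polyhedral region sitting inside a single $E_n$-class, and from any one of its points all of the geometric data of this region (its faces, their mutual $\rho_X$-distances, and the nested rectangular subdivisions used to build it) is computed in a Borel way, exactly as in the constructions of Theorem~\ref{squareregions} and Lemma~\ref{indstep}. Iterating the center functions $c(\cdot)$ used in the construction of Lemma~\ref{Rregions} down to the innermost rectangular partition therefore picks out, in a Borel and $E$-invariant fashion, a canonical $E^X_{\mathbb{T}^n}$-subclass of each $R_n$-class; composing with a Borel selector for $E^X_{\mathbb{T}^n}$ --- which exists because $\mathbb{T}^n$ is compact, as noted in the discussion following Lemma~\ref{indstep}, or via Theorem~\ref{idealistic} together with Lemma~\ref{borelselector} --- yields a Borel map $c_n\colon X\to X$ with $c_n(x)\mathrel{R_n}x$ and $x\mathrel{R_n}y\Rightarrow c_n(x)=c_n(y)$, i.e.\ a Borel selector for $R_n$. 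Fixing any Borel injection $j\colon X\to\mathbb{R}$, the composite $f_n=j\circ c_n$ is then a Borel reduction of $R_n$ to $=_{\mathbb{R}}$, so $R_n$ is smooth.

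Granting this, set $F_n=\bigcap_{m\geq n}R_m$. Each $F_n$ is a Borel equivalence relation (an intersection of equivalence relations), the sequence is increasing, $F_0\subseteq F_1\subseteq\cdots$, and each $F_n$ is smooth: if $g_m\colon X\to\mathbb{R}$ reduces $R_m$ to $=_{\mathbb{R}}$, then $x\mapsto(g_m(x))_{m\geq n}$ reduces $F_n$ to equality on $\mathbb{R}^{\mathbb{N}}$, which is smooth. Moreover $\bigcup_n F_n=E$: the inclusion $\subseteq$ is immediate from $F_n\subseteq R_n\subseteq E$, while if $x\mathrel{E}y$ then Lemma~\ref{Rregions} gives an $N$ with $x\mathrel{R_m}y$ for all $m\geq N$, so $(x,y)\in F_N$. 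Thus $E$ is an increasing union of smooth Borel equivalence relations, i.e.\ $E$ is hypersmooth. (Equivalently, $x\mapsto(f_n(x))_{n}$ is a Borel reduction of $E$ to eventual agreement of sequences of reals, since $f_n(x)=f_n(y)$ for all large $n$ precisely when $x\mathrel{R_n}y$ for all large $n$, which by Lemma~\ref{Rregions} and $R_n\subseteq E$ happens precisely when $x\mathrel{E}y$.)

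The one substantive point is the smoothness of the individual $R_n$; concretely, the thing I expect to require care is verifying that a canonical point of a --- possibly disconnected --- $\mathbb{T}^n$-invariant half-open polyhedral marker region can be selected in a Borel, orbit-invariant manner, after which the assembly into a hypersmooth relation is routine.
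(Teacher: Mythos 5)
Your argument is essentially the paper's, with one small but worthwhile difference in packaging. The paper constructs, exactly as you describe, a Borel map $\phi_n$ picking a canonical point of the tower: it takes the unique $R^n_n$ cube near $x$'s $R^n_1$ region, selects the center torus, and composes with a Borel selector for $E^X_{\mathbb{T}^n}$. It then shows $E\leq_B E_1$ directly via $x\mapsto(\pi(\phi_n(x)))_n$. The key point --- which affects the concern you flag at the end --- is that the paper only verifies the two weaker properties $\phi_n(x)\in[x]_E$ and $x\,R_n\,y\Rightarrow\phi_n(x)=\phi_n(y)$; it never needs $\phi_n(x)\in[x]_{R_n}$. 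That suffices for the reduction: if $\phi_n(x)=\phi_n(y)$ then $x\,E\,y$ because both lie in $[x]_E$, and if $x\,E\,y$ then $\phi_n$ eventually agrees. Your route, via proving each $R_n$ smooth by exhibiting a genuine Borel \emph{selector} $c_n$ with $c_n(x)\,R_n\,x$ and then taking the increasing sequence $F_n=\bigcap_{m\geq n}R_m$, is also correct but strictly more demanding: you must check that the canonical point actually lands inside the $R_n$-region. That does hold --- the $R^n_n$ cube has side $\approx d_n$ while all faces of the $R^n_1$ region are within $12(d_1+\cdots+d_{n-1})\ll d_n$ of its faces, so the cube's center lies deep inside the $R^n_1$ region --- but it is an extra verification that the paper deliberately sidesteps. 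In fact, your own closing reformulation ($x\mapsto(f_n(x))_n$ reduces $E$ to eventual agreement) already works with only $c_n(x)\in[x]_E$, so if you adopt that route instead of the $F_n$ machinery, the concern you flag evaporates, and you recover the paper's argument exactly.
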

\begin{proof} 
Fix a sufficiently fast growing sequence
$\epsilon \ll 1 \ll d_1 \ll d_2 \ll \cdots$. And for each $i$, let $R^i_i$ be the subequivalence relation of $E_i$ as given by Theorem \ref{squareregions} taking $d=d_i$.
Inductively on $i$ we define the subequivalence relations
$R^i_i$, $R^i_{i-1}, \dots, R^i_1$. $R^i_i$ has already been defined,
and we assume that $R^i_{j+1}$ and all the $R^k_l$ for
$1 \leq l \leq k <i$ have been defined. In particular, all the
$R^j_j$, $R^{j+1}_j, \dots$, $R^{i-1}_j$ have been defined. Moreover, if $i>j+1$, all the $R^{j+1}_{j+1},\dots, R^{i-1}_{j+1}$ have also been defined. We assume
inductively that $R^i_{j+1}$ and the subequivalence relations $R^j_j,\dots,R^{i-1}_j$ and $R^{j+1}_{j+1},\dots, R^{i-1}_{j+1}$
satisfy the hypotheses of Lemma \ref{indstep}.
We then get $R^i_j$ from Lemma \ref{indstep} and are able to define $R^i_j$ so that the resulting sequences of subequivalence relations continue to satisfy these hypotheses.

Next we show that if $x, y \in X$ and $x E y$, then
for all large enough $i$ we have $x R^i_1 y$. To see this, suppose $x E y$, let $k_0$ be large enough so that $x E_{k_0} y$, and let $(\vec{t},\vec{r})\in \mathbb{T}^{k_0}\oplus\mathbb{R}^{k_0}$ be such that $y=(\vec{t},\vec{r})\cdot\, x$. Now, suppose for contradiction that $I=\{i\geq 1:\neg(x R^i_1 y)\}$ is infinite. Then, let the infinite sequence $(c_i)_{i\in I}\in [0,1]^\omega$ be such that each $(c_{i}\!\vec{t},c_{i}\!\vec{r})\cdot\, x$ is on the boundary of $x$'s $R^i_1$ region. Then, the sequence $(c_i)_{i\in I}$ must have at least one limit point $c\in[0,1]$, and we let $z=(c\!\vec{t},c\!\vec{r})\cdot\, x$. Note that still $(c\!\vec{t}, c\!\vec{r})\in\mathbb{T}^{k_0}\oplus\mathbb{R}^{k_0}$ and hence $z E_{k_0} x$. Letting $J\subseteq I$ be the set of all $i\in I$ where $z$'s distance from the boundary of $x$'s $R^i_1$ region is $<\epsilon$, we then fix $k>k_0$ and consider $k+1$ many elements of $J$ which are greater than $k$, say $i_1<\ldots<i_{k+1}$. Then, for each $1\leq m\leq k+1$, conclusion \ref{conc2} of Lemma \ref{indstep} can be applied iteratively to say that there must be a point $u_m$ on a boundary face $\mathcal{F}_m$ of a region induced by the restriction of $R^{i_m}_k$ to $z$'s $E_k$ class so that $$\rho(B(z,\epsilon),u_m)\leq 12(d_1 + d_2 + \cdots + d_{k-1}).$$ Hence any two of those faces must be be within $24(d_1 + d_2 + \cdots + d_{k-1})+2\epsilon$ of each other. But then since each of these $\mathcal{F}_m$'s are $(k-1)$-dimensional hyperplanes in a $k$-dimensional space and there are $k+1$ many of them, two of them must be parallel. It would follow from conclusion \ref{concd} of Lemma \ref{indstep} then that these parallel faces much be at least $$ \frac{1}{9000^{k} 16^{{k}^2} ({k}+1)} d_{k}$$ far apart. But since $\epsilon \ll d_1 \ll \cdots \ll d_{k-1} \ll d_k$, it must also be that $$ \frac{1}{9000^{k} 16^{{k}^2} ({k}+1)} d_{k}>24(d_1 + d_2 + \cdots + d_{k-1})+2\epsilon.$$ So we have a contradiction and $I=\{i\geq 1:\neg(x R^i_1 y)\}$ must be finite.

Now, we show that the construction of the $R^i_1$'s provides us a way to choose a point from each $R^i_1$ class. To do this, note that we may define the ``center'' of an $R^i_1$ class of $x$ to be the center of the rectangular $R^i_i$ class which is closest to its $R^i_1$ class. To be more precise, we define the ``center torus'' $C_{i}(x)$ to be $\mathbb{T}^{i}\cdot z$ where $z\in R^i_i$ is any point equally $\rho_{X}$-distant from the edges of the unique $R^i_i$ class which is within Hausdorff distance $12d_{1}+\cdots +12d_{i-1}$ to the $R^i_1$ class of $x$. (Note that this definition does not depend on the choice of $z$.) Then, since $\mathbb{T}^i$ is compact it follows that $E^X_{\mathbb{T}^i}$ has a Borel selector $s_{i}:X\rightarrow X$ such that $s_{i}(x)\in \mathbb{T}^{i}\cdot x$ and $s_{i}(x)=s_{i}(y)$ for all $y\in\mathbb{T}^{i}\cdot x$. And so we define $$\phi_{i}(x)=s_{i}(C_{i}(x)).$$

Since each $\phi_{i}(x)\in[x]_E$, it follows that if $\phi_{i}(x)=\phi_{i}(y)$ for any $i$ then $x E y$. Also, if $x E y$ then we have $x R^i_1 y$ and hence $\phi_{i}(x)=\phi_{i}(y)$ for all large enough $i$. Letting $\pi:X\rightarrow\mathbb{R}$ be a Borel bijection, it follows that $$f(x)=(\pi(\phi_{1}(x)),\pi(\phi_{2}(x)),\pi(\phi_{3}(x)),\ldots)$$ is a Borel reduction of $E$ to $E_1$.
\end{proof}

And finally we show that any sum of $\mathbb{R}$s and $\mathbb{T}$s must provide hypersmooth orbits with its free actions.
\begin{corollary}\label{freeactions} Suppose $E$ is the orbit equivalence relation on a standard Borel space $X$ which is induced by a free Borel action, $a:G\times X\rightarrow X$, of $$G=(\bigoplus_{\beta}\mathbb{T})\oplus(\bigoplus_{\gamma}\mathbb{R})$$ where $\beta,\gamma$ are each finite or $\omega$. Then $E$ is hypersmooth.
\end{corollary}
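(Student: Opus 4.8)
The plan is to deduce this from Theorem~\ref{infpowers} by a padding argument: we realize $G$ as a direct summand of $\mathbb{T}^{<\omega}\oplus\mathbb{R}^{<\omega}$ and enlarge the space by a factor on which the complementary summand acts freely by translation. First, set $H=\mathbb{T}^{<\omega}\oplus\mathbb{R}^{<\omega}$. Since $\beta$ and $\gamma$ are each finite or $\omega$, the group $G\oplus H=(\bigoplus_{\beta}\mathbb{T}\oplus\bigoplus_{\omega}\mathbb{T})\oplus(\bigoplus_{\gamma}\mathbb{R}\oplus\bigoplus_{\omega}\mathbb{R})$ has exactly $\omega$ copies of $\mathbb{T}$ and $\omega$ copies of $\mathbb{R}$, so a reordering of coordinates gives a topological (in particular Borel) isomorphism $\theta:G\oplus H\to\mathbb{T}^{<\omega}\oplus\mathbb{R}^{<\omega}$.

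Next I would let $G\oplus H$ act on the standard Borel space $X\times H$ by $(g,h)\cdot(x,h')=(a(g,x),h+h')$. This is a Borel action, since $a$ and addition on $H$ are Borel, and it is free: if $(g,h)\cdot(x,h')=(x,h')$, then $h=\vec{0}$ from the second coordinate, and then $a(g,x)=x$ forces $g=\vec{0}$ by freeness of $a$. Transporting through $\theta$ yields a free Borel action of $\mathbb{T}^{<\omega}\oplus\mathbb{R}^{<\omega}$ on $X\times H$ with the same orbits; write $\widehat{E}$ for its orbit equivalence relation. Now $(x,h')\,\widehat{E}\,(y,h'')$ holds iff there is $(g,h)\in G\oplus H$ with $a(g,x)=y$ and $h+h'=h''$, and the second equation is always solvable (take $h=h''-h'$), so $(x,h')\,\widehat{E}\,(y,h'')\iff xEy$. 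Hence $f(x)=(x,\vec{0})$ is a Borel reduction of $E$ to $\widehat{E}$.

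Finally, Theorem~\ref{infpowers} gives $\widehat{E}=\bigcup_{n}F_{n}$ with $F_{0}\subseteq F_{1}\subseteq\cdots$ increasing smooth Borel equivalence relations. Pulling back along $f$, each $f^{-1}(F_{n})$ (defined by $x\,f^{-1}(F_{n})\,y\iff f(x)\,F_{n}\,f(y)$) is a Borel equivalence relation, is smooth because composing $f$ with a Borel reduction of $F_{n}$ to $=_{\mathbb{R}}$ reduces $f^{-1}(F_{n})$ to $=_{\mathbb{R}}$, and the sequence is increasing with $\bigcup_{n}f^{-1}(F_{n})=f^{-1}(\widehat{E})=E$ since $f$ is a reduction. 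Thus $E$ is hypersmooth. There is no serious obstacle here; the only point requiring care is the choice of the padding group $H$, which must be large enough that the enlarged action is free while the orbit equivalence on the slice $X\times\{\vec{0}\}$ still recovers $E$, and $H=\mathbb{T}^{<\omega}\oplus\mathbb{R}^{<\omega}$ does this uniformly in $G$.
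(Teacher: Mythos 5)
Your proof is correct and follows essentially the same route as the paper: pad $X$ by a factor $H$ on which the complement of $G$ acts by translation, so that the enlarged group is isomorphic to $\mathbb{T}^{<\omega}\oplus\mathbb{R}^{<\omega}$ and acts freely, then invoke Theorem~\ref{infpowers} and pull back along $x\mapsto(x,\vec{0})$. The only cosmetic difference is that you always take $H=\mathbb{T}^{<\omega}\oplus\mathbb{R}^{<\omega}$, whereas the paper chooses the minimal complementary $H$ (just enough copies to top $\beta$ and $\gamma$ up to $\omega$); both choices work since $\beta+\omega=\gamma+\omega=\omega$, and your final paragraph simply makes explicit the standard fact that hypersmoothness pulls back along a Borel reduction.
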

\begin{proof}
Let $H=(\bigoplus_{\beta'}\mathbb{T})\oplus(\bigoplus_{\gamma'}\mathbb{R})$ where $\beta'=0$ if $\beta=\omega$, $\beta'=\omega$ if $\beta$ is finite, $\gamma'=0$ if $\gamma=\omega$, and $\gamma'=\omega$ if $\gamma$ is finite. (Where $\lambda=0$, we identify $\bigoplus_{\lambda}A$ with the trivial group $\{0\}$.) Then, define the isomorphism $\pi:\mathbb{T}^{<\omega}\oplus\mathbb{R}^{<\omega}\rightarrow G\times H$ by 
$$\pi(\vec{t},\vec{r})=(\langle\vec{t}\restr\beta,\, \vec{r}\restr\gamma\rangle,\langle\vec{t}\!\!\,^*, \vec{r}\!\!\,^*\rangle)$$
where $\vec{t}\!\!\,^*=\vec{0}$ if $\beta=\omega$, $\vec{t}\!\!\,^*=(t_{\beta+n})_{n\in\omega}$ if $\beta$ is finite, $\vec{r}\!\!\,^*=\vec{0}$ if $\gamma=\omega$, and $\vec{r}\!\!\,^*=(r_{\gamma+n})_{n\in\omega}$ if $\gamma$ is finite.

Now, define the action of $G\times H$ on $X\times H$ by letting $(g,h)\cdot (x,h')=(a(g,x), h+h')$, and note that this is a free action. Then, $f:X\rightarrow X\times H$ defined by $f(x)=(x,e_H)$ is a reduction of $E$ to $E^{X\times H}_{G\times H}$ since $a(g,x)=y$ implies $(g,e_H)\cdot (x,e_H)=(y,e_H)$, and $(g,h)\cdot (x,e_H)=(a(g,x), h+e_H)=(y,e_H)$ implies $h=e_H$ and hence $\exists g\in G(a(g,x)=y)$. Since $G\times H$ is isomorphic to $\mathbb{T}^{<\omega}\oplus\mathbb{R}^{<\omega}$ and is acting freely on $X\times H$, $E^{X\times H}_{G\times H}$ is hypersmooth by Theorem \ref{infpowers}, and this shows that $E$ must also be hypersmooth. \end{proof}

\section{Proof of the Main Theorems} 
Finally, we have everything needed to prove the main theorems.

\subsection*{{Proof of Theorem \ref{sumtheorem}.}}\:\;
Suppose an equivalence relation E on a standard Borel space is induced by a Borel action of a group which is isomorphic to the sum of a countable abelian group $A$ with a countable sum of copies of $\mathbb{R}$ and $\mathbb{T}$. By Theorem \ref{reductiontoR}, $E$ is Borel reducible to an equivalence relation $E^X_{\mathbb{R}^{<\omega}}$ which is induced by a Borel action of $\mathbb{R}^{<\omega}$ on some standard Borel space $X$. Then by Theorem \ref{reductiontofree} this $E^X_{\mathbb{R}^{<\omega}}$ is a countable disjoint union of equivalence relations each of which is induced by a free action of a countable sum of copies of $\mathbb{R}$ and $\mathbb{T}$, and by Corollary \ref{freeactions} each of these is hypersmooth. By Lemma \ref{hypersmoothunion}, this means $E^X_{\mathbb{R}^{<\omega}}$ is hypersmooth. Hence $E$ is hypersmooth. \endprf

\subsection*{{Proof of Theorem \ref{LCAtheorem}.}}\:\;
Suppose an equivalence relation $E$ on a standard Borel space $X$ is induced by a Borel action of a second countable LCA group. Note that a second countable LCA group must be Polish (see Theorem 5.3 of \cite{kechris}, and recall that our definition of LCA assumes Hausdorff). By Theorem \ref{LCAuniversal}, $E$ is Borel reducible to an equivalence relation which is induced by a Borel action of $\mathbb{R}^{n}\times\mathbb{Z}^{<\omega}$, for some non-negative integer $n$, on a Borel subset $Y\subseteq X$, which is hypersmooth by Theorem \ref{sumtheorem}. Hence $E$ is hypersmooth. As an action of a locally compact Polish group, $E$ is also essentially countable by Corollary \ref{essentiallycountable}, and thus by Corollary \ref{hypersmoothtohyperfinite} it must be that $E$ is essentially hyperfinite.\endprf

\section{Recent Result on a Question of Hjorth, Ding and Gao}
Earlier drafts of this manuscript cited a question originally mentioned in passing by Hjorth in \cite{hjorth} and later given as a Conjecture by Ding and Gao in \cite{ding_gao}, a positive answer to which would have provided a significant improvement to Hjorth's\, $\ell^1$ dichotomy from \cite{hjorth}. The Conjecture was that any essentially countable Borel equivalence relation which is reducible to some orbit equivalence relation induced by a Borel action of an abelian Polish group must be essentially hyperfinite. The evidence for it seemed to grow as we have now seen positive answers to the countable abelian case in \cite{gao_jackson}, the non-archimedean abelian case in \cite{ding_gao}, and other cases here. However, this question was answered in the negative by the recent result of Allison in \cite{allison} that any treeable countable Borel equivalence relation is reducible to an orbit equivalence relation induced by a Borel action of an abelian Polish group, as a treeable countable Borel equivalence relation need not be hyperfinite.

\bibliographystyle{unsrt}

\bibliography{Cotton_LCA_Dec2021}

\begin{thebibliography}{10}

\bibitem{champetier}
C.~Champetier.
\newblock L'espace des groupes de type fini.
\newblock {\em Topology}, 39(4):657--680, 2000.

\bibitem{gao_jackson}
S.~Gao and S.~Jackson.
\newblock Countable abelian group actions and hyperfinite equivalence
  relations.
\newblock {\em Invent. Math.}, 201(1):309--383, July 2015.

\bibitem{schneider_seward}
S.~Schneider and B.~Seward.
\newblock Locally nilpotent groups and hyperfinite equivalence relations.
\newblock {\em ArXiv preprint arXiv:1308.5853}, 2013.

\bibitem{cjmst}
C.~Conley, S.~Jackson, A.~Marks, B.~Seward, and R.~Tucker-Drob.
\newblock {B}orel asymptotic dimension and hyperfinite equivalence relations.
\newblock {\em ArXiv preprint arXiv:2009.06721}, 2020.

\bibitem{ding_gao}
L.~Ding and S.~Gao.
\newblock Non-archimedean abelian {P}olish groups and their actions.
\newblock {\em Adv. Math.}, 307:312--343, 2017.

\bibitem{kechris}
A.S. Kechris.
\newblock {\em Classical Descriptive Set Theory}, volume 156 of {\em Grad.
  Texts in Math.}
\newblock Springer-Verlag, Berlin, 1995.

\bibitem{gao}
S.~Gao.
\newblock {\em Invariant Descriptive Set Theory}.
\newblock Pure Appl. Math. CRC Press, Boca Raton, Florida, 2009.

\bibitem{dougherty_jackson_kechris}
R.~Dougherty, S.~Jackson, and A.S. Kechris.
\newblock The structure of hyperfinite {B}orel equivalence relations.
\newblock {\em Trans. Amer. Math. Soc.}, 341(1):193--225, 1994.

\bibitem{kechris_louveau}
A.S. Kechris and A.~Louveau.
\newblock The classification of hypersmooth {B}rel equivalence relations.
\newblock {\em J. Amer. Math. Soc.}, 10(1):215--242, 1997.

\bibitem{becker_kechris}
H.~Becker and A.S. Kechris.
\newblock {\em The descriptive set theory of {P}olish group actions}, volume
  232 of {\em London Math. Soc. Lecture Note Ser.}
\newblock Cambridge Univ. Press, 1996.

\bibitem{miller}
D.E. Miller.
\newblock On the measurability of orbits in {B}orel actions.
\newblock {\em Proc. Amer. Math. Soc.}, 63:165--170, 1977.

\bibitem{lacunary}
A.S. Kechris.
\newblock Countable sections for locally compact group actions.
\newblock {\em Ergodic Theory Dynam. Systems}, 12(2):283--295, June 1992.

\bibitem{morris}
S.A. Morris.
\newblock {\em Pontryagin duality and the structure of locally compact abelian
  groups}, volume~29 of {\em London Math. Soc. Lecture Note Ser.}
\newblock Cambridge Univ. Press, 1977.

\bibitem{deitmar_echterhoff}
A.~Deitmar and S.~Echterhoff.
\newblock {\em Principles of harmonic analysis. 2nd ed.}
\newblock Universitext series. Springer, 2014.

\bibitem{brown_higgins_morris}
R.~Brown, P.J. Higgins, and S.A. Morris.
\newblock Countable products and sums of lines and circles: their closed
  subgroups, quotients and duality properties.
\newblock {\em Math. Proc. Cambridge Philos. Soc.}, 78(1):19--32, 1975.

\bibitem{bourbaki}
N.~Bourbaki.
\newblock {\em Elements of mathematics: General topology. Chapters 5-10}.
\newblock Springer-Verlag Berlin Heidelberg, 1989.

\bibitem{jackson_kechris_louveau}
S.~Jackson, A.S. Kechris, and A.~Louveau.
\newblock Countable {B}orel equivalence relations.
\newblock {\em J. Math. Log.}, 2(1):1--80, 2002.

\bibitem{slutsky}
K.~Slutsky.
\newblock Lebesgue orbit equivalence of multidimensional {B}orel flows: A
  picturebook of tilings.
\newblock {\em Ergodic Theory Dynam. Systems}, 37(6):1966--1996, September
  2017.

\bibitem{hjorth}
G.~Hjorth.
\newblock Actions by the classical {B}anach spaces.
\newblock {\em J. Symb. Log.}, 65:352--420, 2000.

\bibitem{allison}
S.~Allison.
\newblock {\em Actions of {P}olish groups and anti-classification results}.
\newblock PhD thesis, Carnegie Mellon Univ., 2021.

\end{thebibliography}

\end{document}